\newcounter{theo}
\newtheorem{lmm}[theo]{Lemma}
\newtheorem{thr}{Theorem}
\newtheorem{crl}[theo]{Corollary}
\newtheorem{prp}[theo]{Proposition}
\newtheorem{defn}[theo]{Definition}
\newtheorem{rmrk}[theo]{Remark}
\newcommand{\R}{\mathbb{R}}
\newcommand{\Compl}{\mathbb{C}}
\newcommand{\N}{\mathbb{N}}
\newcommand{\Z}{\mathbb{Z}}
\newcommand{\T}{\mathbb{T}}
\newcommand{\D}{\mathbb{D}}
\newcommand{\Prob}{\mathbb{P}}
\newcommand{\E}{\mathbb{E}}
\newcommand{\eps}{\varepsilon}
\newcommand\CnjCl[1]{[#1]}
\newcommand{\one}{\mathds{1}}
\newcommand{\Diff}{\mathrm{Diff}}
\newcommand{\diff}{\mathfrak{diff}}
\newcommand{\SL}{\mathrm{PSL}}
\newcommand{\nSL}{\mathrm{SL}}
\newcommand{\tr}{\mathrm{tr}}
\newcommand{\FMeas}[1]{\mathscr{M}_{#1}}
\newcommand{\FMeasSL}[1]{\widetilde{\mathscr{M}}_{#1}}
\newcommand{\Haar}{\nu_H}
\newcommand{\TechMeas}{\mathcal{P}}
\newcommand{\ang}{a}
\newcommand{\dense}[1]{{D}^{#1}}
\newcommand{\Schw}{\mathcal{S}}
\newcommand{\Wiener}{\mathcal{W}}
\newcommand{\A}{\mathsf{P}}
\renewcommand*{\d}{\mathop{}\!\mathrm{d}}
\newcommand{\WS}[3]{\mathcal{B}_{#1}^{\, #2, #3}} %
\newcommand{\Cfree}{C_{0,\text{free}}}
\newcommand{\measOrb}[2]{\mathscr{L}_{#1}^{#2}}
\newcommand{\measPin}[3]{\mathscr{L}_{#1}^{#2, #3}}
\newcommand{\PartF}[2]{\mcl{Z}_{#1}^{\, #2}}
\newcommand{\PartFSL}{\mcl{Z}}
\def\obs#1#2#3{\def\temp@expr{\big(#1; #2, #3\big)}\mathcal{O}\obs@}
\def\obs@{%
	\@ifnextchar{_}{\obs@sub}{
	\@ifnextchar{^}{\obs@sup}{\temp@expr}}}
\def\obs@sub#1#2{_{#2}\obs@}
\def\obs@sup#1#2{^{\, #2}\obs@}
\newcommand{\RR}{\mathbb{R}}
\newcommand{\NN}{\mathbb{N}}
\newcommand{\dd}{\mathrm{d}}
\newcommand{\de}{\partial}
\newcommand{\mcl}[1]{\mathcal{#1}}
\newcommand{\mfk}[1]{\mathfrak{#1}}
\newcommand{\mrm}[1]{\mathrm{#1}}
\DeclarePairedDelimiterX{\inp}[2]{\langle}{\rangle}{#1, #2}
\title{Probabilistic Definition of the Schwarzian Field Theory}
\author{Roland Bauerschmidt\footnote{Courant Institute of Mathematical Sciences, New York University. E-mail: \url{bauerschmidt@cims.nyu.edu}.}\qquad
  Ilya Losev\footnote{DPMMS, University of Cambridge. E-mail: \url{il320@cam.ac.uk}.}\qquad
  Peter Wildemann\footnote{Université de Genève. E-mail: \url{peter.wildemann@unige.ch}.}}
\date{June 17, 2025}
\begin{document}

\maketitle

\abstract{We provide mathematical foundations for the Schwarzian Field Theory as a finite Borel measure
  on $\mathrm{Diff}^1(\mathbb{T})/\mathrm{PSL}(2,\mathbb{R})$, a quotient of the space of circle reparametrisations.
  The measure is defined by a natural change of variables formula,
  which we show uniquely characterises it. We further compute its partition function (total mass) from this change of variable formula.
  The existence of the measure then
  follows from an explicit construction involving a nonlinear transformation of a Brownian Bridge, proposed by Belokurov--Shavgulidze.
  In two companion papers by Losev, the predicted exact cross-ratio correlation functions for non-crossing Wilson lines and the large deviations are derived from this measure.
}

\section{Introduction and main results}\label{sect_main_part}

\subsection{Introduction}
\label{sec:intro}

The Schwarzian Field Theory arose in physics in the study of the Sachdev--Ye--Kitaev (SYK) random matrix model,
see \citep{MaldacenaStanford} and \citep{KitaevJosephine} for introductions.
It is also interesting as an example of a nonlinear but, at least formally, exactly  solvable Euclidean field theory in $0+1$ dimensions,
with nonlinearly realised symmetry, connections to Liouville Field Theory,
infinite dimensional symplectic geometry, two-dimensional topological Yang-Mills theory, and other topics; references are given below.
Perhaps most intriguingly, in physics, it has been proposed as the holographic dual to Jackiw--Teitelboim (JT) gravity on the Poincar\'e disk, see for example \citep{SaadShenkerStanford2019,MR4058854,JT_Wilson_line}.

The goal of this paper is to provide mathematical foundations for the Schwarzian Field Theory
as a finite Borel measure on $\Diff^1(\T)/\SL(2,\R)$, the quotient of the space of circle reparametrisations.
We address uniqueness, existence, and computation of its total mass (partition function).
This measure should formally be given by the ill-defined density (see \cite[(1.1)]{StanfordWittenFermionicLocalization})
\begin{equation}\label{eq:1}
\d\FMeas{\sigma^2}\big(\phi\big) = 
\exp\left\{+\frac{1}{\sigma^2}\int_{\T} 
\left[\Schw_\phi(\tau)+2\pi^2\phi'^{\, 2}(\tau)\right] \d\tau \right\}
\frac{\prod_{\tau \in \T}\frac{\d\phi(\tau)}{\phi'(\tau)}}{\SL(2, \R)} ,
\end{equation} 
where $\Schw_\phi(\tau)$ is the Schwarzian derivative of $\phi$ defined by
\begin{equation} \label{eq:Schw-def}
  \Schw_\phi(\tau) = \Schw(\phi, \tau) = \left(\frac{\phi''(\tau)}{\phi'(\tau)}\right)'- \frac{1}{2} \left(\frac{\phi''(\tau)}{\phi'(\tau)}\right)^2,
\end{equation}
and the measure $\FMeas{\sigma^2}$ in \eqref{eq:1} should be supported on the topological space $\Diff^1(\T)/\SL(2, \R)$, where
$\T = [0,1]/\{0 \sim 1\}$ is the unit circle parametrised by the angle $[0,1)$, and
$\Diff^1(\T)$ is the space of $C^1$ orientation-preserving diffeomorphisms of $\T$, see Section~\ref{sec:prelim}.
The $\SL(2,\R)$-action on $\Diff^1(\T)$ implicit in the quotient in \eqref{eq:1} is described in the next paragraph.
Heuristically, the formal density \eqref{eq:1} only depends on the orbit of this action and the quotient by $\SL(2,\R)$ therefore makes sense.

The $\SL(2,\R)$ action on $\Diff^1(\T)$ arises from post-compositions by conformal diffeomorphisms of the unit disk restricted to the boundary, which is identified with $\T$.
Explicitly,
it is instructive to map the circle to the real line and consider
$f(\tau) = \tan(\pi\phi(\tau)-\tfrac{\pi}{2})$ instead of $\phi(\tau)$, where $\tau \in \T$ remains on the circle.
In terms of this variable, the exponential in the formal density \eqref{eq:1} of the measure can be written as
\begin{equation} \label{eq:1-Schw}
  \exp\left\{+ \frac{1}{\sigma^2}\int_{\T} \Schw(\tan (\pi\phi-\tfrac{\pi}{2}),\tau) \, \d\tau \right\},
\end{equation}
again see Section~\ref{sec:prelim}, explaining the name Schwarzian Field Theory.
The bijection $\phi \in (0,1)\mapsto f = \tan(\pi\phi-\tfrac{\pi}{2}) \in \R$
is the restriction to the boundary
of the standard conformal map $z\mapsto i \frac{1+z}{1-z}$ from the unit disk in $\mathbb{C}$
to the upper half plane, on which $\SL(2,\R)$ acts
by the fractional linear transformation
\begin{equation}
  \label{eq:66}
  f\mapsto M \circ f = \frac{af+b}{cf+d}, \qquad M= \pm \left(\begin{smallmatrix} a & b\\ c &d \end{smallmatrix}\right) \in \SL(2,\R).
\end{equation}
Even though this $\SL(2,\R)$-action is by left composition, we will call it
the right-action on $\Diff^1(\T)$, following the discussion in \citep[Section~2.1]{StanfordWittenFermionicLocalization}, where this action is interpreted as an action on the inverse of $\phi$.
The Schwarzian is invariant under this $\SL(2,\R)$ action, i.e.,
\begin{equation}
  \Schw(M\circ f,\tau) = \Schw(f,\tau) \qquad \text{for any $M \in \SL(2,\R)$,}
\end{equation}
and it vanishes if and only if $f$ itself is a fractional linear transformation.
It can further be argued that $\prod_{\tau\in \T} \frac{\d\phi(\tau)}{\phi'(\tau)}$
in the formal expression \eqref{eq:1} should be the non-existent Haar measure on the group $\Diff^1(\T)$,
and in particular be invariant under the $\SL(2,\R)$ action, see \citep[Section~2.2]{StanfordWittenFermionicLocalization}.
We abuse the notation throughout the paper and use the same symbol for $\phi\in \Diff^1(\T)$ and its conjugacy class $\phi \in \Diff^1(\T)/\SL(2,\R)$ when only the latter is relevant.

\bigskip

In \citep{StanfordWittenFermionicLocalization} the partition function of the Schwarzian Field Theory  was computed by a formal application
of the Duistermaat--Heckman theorem on the infinite dimensional space $\Diff^1(\T)/\SL(2,\R)$,
and in \citep{ConformalBootstrap} the natural cross-ratio correlation functions of the Schwarzian Field Theory
were obtained via an application of the conformal bootstrap and the DOZZ formula to a degenerate limit of the two-dimensional Liouville Field Theory.

More probabilistic perspectives on the Schwarzian Field Theory 
closely related to our results and discussed further below
were proposed in \citep{BelokurovShavgulidzeExactSolutionSchwarz},
and we also refer to \citep{AlekseevBosonization} and references for another related but more geometric point of view
in the context of Virasoro coadjoint orbits which the Schwarzian Field Theory is an instance of, see below.

\subsection{Main results for Schwarzian Field Theory}
\label{sec:intro-schwarzian}

As an important ingredient of the characterisation and construction of the Schwarzian measure \eqref{eq:1},
we first consider an unquotiented version of the measure,
which should formally be given by
\begin{equation}\label{eq:1-unquotiented}
  \d\FMeasSL{\sigma^2}\big(\phi\big) = 
  \exp\left\{+\frac{1}{\sigma^2}\int_{\T} 
    \Schw(\tan (\pi\phi-\tfrac{\pi}{2}),\tau) \,
    \d\tau \right\}
  \prod_{\tau \in \T}\frac{\d\phi(\tau)}{\phi'(\tau)} ,
\end{equation}
without quotienting by $\SL(2,\R)$.
Such a measure should have transformation behaviour
consistent with the Schwarzian chain rule:
\begin{equation} \label{e:Schw-chain}
  \Schw(g\circ f, \tau) = \Schw(f,\tau) + \Schw(g,f(\tau)) f'(\tau)^2 ,
\end{equation}
valid for $f,g\in C^3$. %
Checking that $\Schw(\tan(\pi\tau-\tfrac{\pi}{2}), \tau)= 2\pi^2$, it follows that
\begin{equation} \label{e:Schw-tan-chain}
  \Schw(\tan(\pi(\psi\circ\phi)-\tfrac{\pi}{2}), \tau)
  = \Schw(\tan(\pi\phi-\tfrac{\pi}{2}),\tau) + \Big(\Schw(\tan(\pi\psi-\tfrac{\pi}{2}), \phi(\tau)) -2\pi^2\Big)\phi'(\tau)^2
  ,
\end{equation}
see Section~\ref{sec:prelim}.
Thus changing variables  in \eqref{eq:1-unquotiented} from $\phi$ to $\psi\circ\phi$  for a fixed $\psi\in \Diff^3(\T)$,
we expect that the Radon--Nikodym derivative of the measures should be given by the exponential of the second term on the right-hand side of~\eqref{e:Schw-tan-chain}.
We take the transformation behaviour as the definition of the unquotiented Schwarzian measure and show that it indeed characterises the measure uniquely.

\begin{thr}\label{thrMeasureSLInv}
  There is a unique (up to a multiplicative constant) $\SL(2,\R)$-invariant Borel measure $\FMeasSL{\sigma^2}$ supported on $\Diff^1(\T)$
  that satisfies the expected change of variables formula
  \begin{equation}  \label{eqDiffeoMeasureChange}
    \frac{\d \psi^{\ast}\!\FMeasSL{\sigma^2}(\phi)}{\d\FMeasSL{\sigma^2}(\phi)} =
    \frac{\d \FMeasSL{\sigma^2}(\psi\circ \phi)}{\d\FMeasSL{\sigma^2}(\phi)} = 
    \exp\left\{
      \frac{1}{\sigma^2}\int_{\T}\Big[ \Schw(\tan(\pi\psi-\tfrac{\pi}{2}),\phi(\tau))-2\pi^2 \Big]\, \phi'(\tau)^2 \d \tau \right\}
    ,
  \end{equation}
    for  any $\psi\in \Diff^3(\T)$, and has 
  a quotient $\FMeas{\sigma^2} = \FMeasSL{\sigma^{2}}/\SL(2,\R)$ that is a finite Borel measure
  on $\Diff^1(\T)/\SL(2,\R)$.
  This measure has total mass
  \begin{equation} \label{e:Zsigma2}
    \PartFSL(\sigma^2)
    =
    \left(\frac{2\pi}{\sigma^2}\right)^{3/2} \exp\left(\frac{2\pi^2}{\sigma^2}\right).
  \end{equation}
  when we impose a natural normalisation, consistent with the $\zeta$-regularisation for the partition function of a Brownian Bridge, see Section~\ref{sec:BB}.
\end{thr}

More precisely, in the statement of the theorem, %
diffeomorphisms $\psi \in \Diff^3(\T)$ act on $\phi \in \Diff^1(\T)$ by post-composition, i.e.\ $\phi \mapsto \psi \circ \phi \in \Diff^1(\T)$,
and under this action the measure transforms according to the pullback $\FMeasSL{\sigma^2} \mapsto \psi^{\ast}\!\FMeasSL{\sigma^2}$:
\begin{equation}
  \psi^{\ast}\!\FMeasSL{\sigma^2}(A)
  = \psi^{-1}_{\ast}\!\FMeasSL{\sigma^2}(A)
  = \FMeasSL{\sigma^2}(\psi \circ A),
\end{equation} 
where $\psi\circ A \coloneqq \left\{\psi \circ \phi\, \big| \, \phi \in A\right\}$.
For $\psi$ corresponding to the action of $\SL(2,\R)$ on the circle $\T$ (see Section~\ref{sec:prelim})
one has $\Schw(\tan(\pi\psi-\tfrac{\pi}{2}),\phi)
=2\pi^2$, i.e.\ the second term in \eqref{e:Schw-tan-chain} vanishes and the measure is invariant.

The change of variables formula \eqref{eqDiffeoMeasureChange} should be regarded
as the analogue of the Cameron--Martin formula for Brownian motion under the change of variable from $B$ to $B+h$.

The unique characterisation of the measure relies on the computation of the partition function, i.e.\ total mass of the measure.
Perhaps surprisingly, we obtain the  latter using only the transformation formula \eqref{eqDiffeoMeasureChange},
or more precisely its deformation to more general Virasoro coadjoint orbit measures,
but does  not use the explicit realisation of the measure,
see Section~\ref{sec:orbital}--\ref{sec:schwarzian}.

The existence and explicit realisation of the measure \eqref{eq:1-unquotiented}, detailed in  Section~\ref{sec:construction}, follows the proposal of
\citep{BelokurovShavgulidzeExactSolutionSchwarz, BelokurovShavgulidzeCorrelationFunctionsSchwarz}.
Namely, one can construct a measure as an appropriate non-linear transformation of a (reweighted) Brownian bridge $\xi$ and a uniform shift $\Theta\in\T$, %
and then verify that the obtained measure satisfies %
the change of variables formula \eqref{eqDiffeoMeasureChange} as well as the integrability condition required for Theorem~\ref{thrMeasureSLInv}.
Essentially, using the parametrisation (which we refer to as the \emph{Malliavin--Shavgulidze map})
\begin{equation} \label{e:phi-exp}
  \phi(\tau) = \Theta + \frac{\int_0^\tau e^{\xi(t)}  \d t}{\int_0^1 e^{\xi(t)}\d t} =  \Theta + \A_\xi(\tau),
\end{equation}
with $\xi\colon [0,1] \to \R$ and $\Theta \in \T$ a constant, one has
\begin{equation} \label{e:Schw-BB}
  - \Schw(\tan(\pi\phi-\tfrac{\pi}{2}),\tau) = \frac12 \xi'(\tau)^2 -\xi''(\tau) - 2\pi^2 \left(\frac{e^{\xi(\tau)}}{\int_0^1 e^{\xi(t)}\d t}\right)^2,
\end{equation}
and under this parametrisation, the ``reference measure'' is heuristically given by
\begin{equation}
  \prod_{\tau} \frac{\d\phi(\tau)}{\phi'(\tau)}
  = \d\Theta \prod_\tau \d\xi(\tau).
\end{equation}
Thus it is natural to interpret the measure with action given by \eqref{e:Schw-BB} in terms of
a reweighted Brownian Bridge, whose formal action is $\frac12 \int \xi'(\tau)^2 \, \d\tau$,
and a constant $\Theta$ (the zero mode), distributed according to the Lebesgue measure.
In Section~\ref{sec:construction}, we give a precise version of this construction and show that it indeed leads
to an infinite Borel measure $\FMeasSL{\sigma^2}$ on $\Diff^1(\T)$ which satisfies the conditions of
Theorem~\ref{thrMeasureSLInv}.

A convincing outline of the probabilistic construction of the Schwarzian measure was already given by \citep{BelokurovShavgulidzeExactSolutionSchwarz}.
Because their argument is not fully mathematically rigorous,
we complete the details of their construction and provide a result that can be applied to Theorem~\ref{thrMeasureSLInv} and that also
provides the basis for subsequent probabilistic applications to correlation functions \citep{LosevCorr} and large deviations \citep{LosevLDP}
(see also Section~\ref{sec:literature} for closer comparison).

\begin{rmrk} \label{rk:spectraldensity}
The partition function can also be rewritten as
\begin{equation}\label{eq:8}
  \begin{aligned}
    \PartFSL(\sigma^2)
    \coloneqq \FMeas{\sigma^2}\Big(\Diff^1(\T)/\SL(2, \R)\Big)
    &= 
      \left(\frac{2\pi}{\sigma^2}\right)^{3/2} \exp\left(\frac{2\pi^2}{\sigma^2}\right)\\
    &=
      \int_0^{\infty} \exp\left(-\frac{\sigma^2 k^2}{2}\right)\sinh(2\pi k) \, 2k \d k
    \\
    &=\int_0^{\infty} e^{-\sigma^2E} \sinh(2\pi \sqrt{2E}) \, 2\d E.
  \end{aligned}
\end{equation}
The second line in \eqref{eq:8} follows by integration by parts:
\begin{equation}
  \label{eq:9}
  \begin{aligned}
    \int_0^{\infty} \exp\left(-\frac{\sigma^2 k^2}{2}\right)\sinh(2\pi k)\, 2k \d k
    &= 
      \frac{4\pi}{\sigma^2}
      \int_0^{\infty} \exp\left(-\frac{\sigma^2 k^2}{2}\right)\cosh(2\pi k) \d k \\
    &= 
      \frac{2\pi}{\sigma^2}
      \int_{-\infty}^{\infty} \exp\left(-\frac{\sigma^2 k^2}{2}+2\pi k\right) \d k \\
    &=
      \left(\frac{2\pi}{\sigma}\right)^{3/2} \exp\left(\frac{2\pi^2}{\sigma^2}\right).
    \end{aligned}
\end{equation}
The third line then follows by changing variables to $E=\frac12 k^2$.

The right-hand side of \eqref{eq:8} has the form of a Laplace transform of a spectral density $\nu(E) = 2\sinh(2\pi\sqrt{2E})$.
It is expected that it approximates the (low temperature) thermal partition function $\E[\tr(e^{-\beta H})]$ of the SYK model
(with $\sigma^2$ corresponding to inverse temperature $\beta$).
For further discussion, see for example \citep[Section 2.4]{StanfordWittenFermionicLocalization}.
\end{rmrk}

\subsection{From Schwarzian theory to Virasoro orbital measures}

The Schwarzian Field Theory \eqref{eq:1} can be seen as an \emph{exceptional} representative ($\alpha=\pi$) of a wider class of \emph{orbital measures} with parameter $\alpha$.
These are motivated by the coadjoint action of $\Diff(\T)$ on the dual of the Virasoro algebra, $\mfk{vir}^\ast \cong (\diff(\T) \oplus \R)^\ast$.
The orbits of this action are referred to as \emph{Virasoro coadjoint orbits} and carry a natural symplectic structure.
This geometric viewpoint was emphasised in \citep{StanfordWittenFermionicLocalization} (see also \citep{WittenCoadjoint}),
and exploited there to predict the formula for the partition function via a formal application of the Duistermaat--Heckman localisation formula.

For our purposes, the $\alpha$-orbits with $\alpha\neq \pi$ provide a natural deformation of the Schwarzian measure that is complex analytic in $\alpha^2$.
As such, they are important in the study of the Schwarzian measure as a limit $\alpha \nearrow \pi$ in Section~\ref{sec:schwarzian}.
The \emph{unquotiented (Virasoro) $\alpha$-orbital measure} should be a measure on $\Diff^{1}(\T)$ described in terms of the formal density
\begin{equation}
  \label{eq:55}
  \d\measOrb{\sigma^2}{\alpha}(\phi)
  = 
  \exp\left\{+\frac{1}{\sigma^2}\int_{\T} 
    \left[\Schw_\phi(\tau)+2\alpha^2\phi'^{\, 2}(\tau)\right] \d\tau \right\}
  \prod_{\tau\in\T}\frac{\d\phi(\tau)}{\phi'(\tau)}
  .
\end{equation}
We are interested in $\alpha \in \RR_{\geq 0} \cup i\RR_{\geq 0}$ and often regard them as a family in the real parameter $\alpha^{2} \in \RR$.
The unquotiented Schwarzian measure \eqref{eq:1-unquotiented} corresponds to $\FMeasSL{\sigma^2} =\measOrb{\sigma^2}{\pi}$.

As for the Schwarzian case discussed already around Theorem~\ref{thrMeasureSLInv},
the measure \eqref{eq:55} should transform under post-composition
in agreement with the Schwarzian chain rule \eqref{e:Schw-chain},
and we use the expected transformation behaviour as the defining property of orbital measures
and show that this definition indeed characterises the measure uniquely.

\begin{figure}
  \centering
  \includegraphics[width=0.89\linewidth]{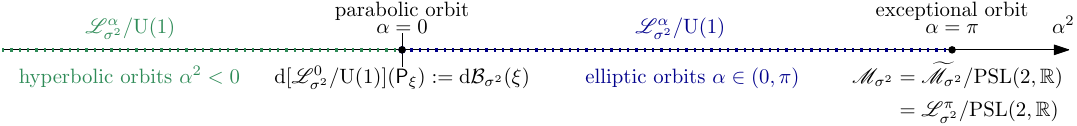}
  \captionsetup{width=0.89\linewidth,font=scriptsize}
  \caption{
  An illustration of the classification of the orbital measures.
    The coadjoint orbits of the Virasoro group are identified with quotients of $\Diff^{1}(\T)$ and the orbital measures are supported on those, after taking the quotient with respect to an appropriate invariance group.
    The orbits $\alpha = 0$ and $\alpha = \pi$ take a distinguished role in our analysis.
    For $\alpha = 0$ the orbital measure can be seen as a push-forward of a Brownian Bridge measure along the Malliavin--Shavgulidze map $\xi \mapsto \A_{\xi}$, see \eqref{eq:62}.
    For $\alpha = \pi$, the invariance group is the non-compact group $\mrm{PSL}(2,\R)$ and the quotiented measure is identified with the Schwarzian field theory.
    The names originate in the classification of $\mrm{PSL}(2,\RR)$-conjugacy classes.
  }
  \label{fig:orbits_overview}
\end{figure}

\begin{defn} \label{def:orbital-measure}
  We say that a Radon measure  $\measOrb{\sigma^2}{\alpha}$ on $\Diff^1(\T)$ is an unquotiented Virasoro $\alpha$-orbital measure if the following change of variables formula holds: for any $\psi \in \Diff^3(\T)$,
  \begin{equation}  \label{eqDiffeoMeasureChangeUniquness}
    \frac{\d \psi^{\ast}\!\measOrb{\sigma^2}{\alpha}(\phi)}{\d\measOrb{\sigma^2}{\alpha}(\phi)} = 
    \exp\left\{
      \frac{1}{\sigma^2}\int_{\T}\Big[ \Schw_\psi(\phi(\tau))+2\alpha^2 \big(\psi'(\tau)^2-1\big) \Big]\, \phi'(\tau)^2 \d \tau \right\}
    .
  \end{equation}
\end{defn}
 
Equivalently, the change of variables formula  \eqref{eqDiffeoMeasureChangeUniquness} can be written as
\begin{equation}  \label{eqDiffeoMeasureChangeUniquness-tan}
  \frac{\d \psi^{\ast}\!\measOrb{\sigma^2}{\alpha}(\phi)}{\d\measOrb{\sigma^2}{\alpha}(\phi)} = 
  \exp\left\{
    \frac{1}{\sigma^2}\int_{\T}\Big[ \Schw(\tfrac{1}{\alpha}\tan(\alpha\psi-\tfrac{\alpha}{2}),\phi(\tau))-2\alpha^2 \Big]\, \phi'(\tau)^2 \d \tau \right\},
\end{equation}
where we implicitly understand that $\frac1{\alpha}\tan(\alpha\,\cdot) = \mrm{id}$ if $\alpha=0$,
see \eqref{e:Schw-tan-chain-bis} below.

\begin{rmrk} \label{rmrk:U1}
For any $\alpha$, the change of variables formula \eqref{eqDiffeoMeasureChangeUniquness-tan} implies that
the unquotiented Virasoro orbital measures are invariant
under the $\mrm{U}(1)$ action $\phi \mapsto \phi+\theta$ where  $\theta\in \T$.
For the exceptional value $\alpha=\pi$, the measure is invariant
under the action of the larger group $\SL(2,\R)$,
and in fact for $\alpha=k\pi$, $k\in\NN$, it is invariant under $G_\alpha = \SL^{(k)}(2,\R)$, i.e.\ the $k$-fold covering group of $\mrm{PSL}(2,\RR)$.

Consequently, one may consider the quotient measure $\measOrb{\sigma^2}{\alpha}/G_{\alpha}$,
and in fact this is the natural object from the point of view of Virasoro coadjoint orbits.
See Section~\ref{sec:schwarzian} in the most delicate case of the Schwarzian orbit.
We will only consider the parameter range $\alpha^{2} \leq \pi^{2}$, in which the action in \eqref{eq:55} is bounded from below and the (quotiented) orbital measure is expected to have finite mass.
\end{rmrk}

Considering \eqref{eq:55}, one expects unquotiented $\alpha$-orbital measures to form a family of mutually absolutely continuous measures (note that orbital measures are assumed to be supported on $C^{1}$-reparametrisations $\Diff^{1}(\T)$, so the $\alpha^{2}$-dependent term is a.e.\ finite).
In fact the following holds:
\begin{lmm}\label{lem:orbital-measure-family}
  If $\measOrb{\sigma^2}{\alpha_0}$ is an unquotiented Virasoro $\alpha_0$-orbital measure, then $\measOrb{\sigma^2}{\alpha}$, defined by
  \begin{equation}\label{eq:57}
    \d\measOrb{\sigma^2}{\alpha}(\phi) = \exp\left\{\frac{2(\alpha^2-\alpha_0^2)}{\sigma^2}\int_{\T}\phi'^{\, 2}(\tau)\d\tau\right\} \d\measOrb{\sigma^2}{\alpha_0}(\phi)
  \end{equation}
  is an unquotiented $\alpha$-orbital measure.
\end{lmm}

\begin{proof}
  The proof is straightforward from the definition.
  Indeed, writing $\d\psi^{\ast}\measOrb{\sigma^2}{\alpha}(\phi) = \d\measOrb{\sigma^2}{\alpha}(\psi\circ \phi)$,
  and applying the Radon-Nikodym chain rule %
  one has:
  \begin{equation}
    \label{eq:56}
    \frac{\d\measOrb{\sigma^2}{\alpha}(\psi\circ\phi)}{\d\measOrb{\sigma^2}{\alpha}(\phi)}
    =
    \frac{\d\measOrb{\sigma^2}{\alpha}(\psi\circ\phi)}{\d\measOrb{\sigma^2}{\alpha_0}(\psi \circ \phi)}
    \cdot
    \frac{\d\measOrb{\sigma^2}{\alpha_0}(\psi\circ\phi)}{\d\measOrb{\sigma^2}{\alpha_0}(\phi)}
    \cdot
    \frac{\d\measOrb{\sigma^2}{\alpha_0}(\phi)}{\d\measOrb{\sigma^2}{\alpha}(\phi)}.
  \end{equation}
  By the definition in \eqref{eq:57}, the first and last factor combine to $\exp(\frac1{\sigma^{2}}{2(\alpha^2-\alpha_0^2)}\int [(\psi\circ\phi)'^{2} -\phi'^{2}])$.
  The middle factor is given by \eqref{eqDiffeoMeasureChangeUniquness}.
  Combining these factors it follows easily that $\measOrb{\sigma^2}{\alpha}$ is an unquotiented $\alpha$-orbital measure.
\end{proof}

The following theorem shows that the change of variable formula \eqref{eqDiffeoMeasureChangeUniquness}
uniquely characterises orbital measures up to a multiplicative constant and
provided a natural regularity condition holds. The regularity condition is motivated by the previous lemma and stated first.

\begin{defn} \label{def:regular}
  An unquotiented $\alpha_{0}$-orbital measure $\measOrb{\sigma^2}{\alpha_{0}}$ is called \emph{regular} if there exists an $\alpha^{2}\in\R$
  such that $\measOrb{\sigma^2}{\alpha}$ as given by Lemma~\ref{lem:orbital-measure-family} has finite total mass.
\end{defn}

\begin{thr}\label{thm:uniqueness}
  Let $\measOrb{\sigma^2}{\alpha}$ be a regular unquotiented Virasoro $\alpha$-orbital measure.
  Then $\measOrb{\sigma^2}{\alpha}$ is unique up to a multiplicative constant.
\end{thr}

Since the change of variables formula is invariant under translations,
by which we understand the action $\phi(\cdot) \mapsto \phi({\cdot}\, + \tau_{0}
)$ for any fixed $\tau_{0} \in \T$,
the theorem implies in particular that regular unquotiented Virasoro $\alpha$-orbital measure are translation-invariant.
Note that the definition of $\alpha$-orbital measures refers to the transformation of the measure under post-composition, whereas the translation invariance is with respect to pre-composition.
An interesting byproduct as well as ingredient of the uniqueness theorem's proof is an explicit expression for the $\alpha$-dependence of the partition functions for $\alpha^{2} < \pi^{2}$:

\begin{thr}\label{thm:partition-function}
  Let $\left\{\measOrb{\sigma^2}{\alpha}\right\}_{\alpha^{2}\in\RR}$ be a family of regular unquotiented orbital measures as in Lemma~\ref{lem:orbital-measure-family}.
  Write $\PartF{\sigma^{2}}{\alpha} = \measOrb{\sigma^2}{\alpha}(\Diff^{1}(\T))$ for the total mass.
  Then we have for any $\alpha^{2} < \pi^{2}$ that
  \begin{equation}
    \label{eq:4}
    \PartF{\sigma^{2}}{\alpha} = \frac{\alpha}{\sin(\alpha)}e^{2\alpha^{2}/\sigma^{2}} \PartF{\sigma^{2}}{0}.
  \end{equation}
\end{thr}

Note that the total mass \eqref{eq:4} diverges as $\alpha \nearrow \pi$, which is expected due to the emergence of the measure's $\mrm{PSL}(2,\RR)$-symmetry at the exceptional point.
Normalising the circle $\T$ to have unit length, the total mass of the quotiented measure $\measOrb{\sigma^{2}}{\alpha}/\mrm{U}(1)$ is the same as in \eqref{eq:4}.

In Section~\ref{sec:construction} we show the existence
of regular unquotiented orbital measures via an explicit construction in terms of Brownian Bridges.
By Lemma~\ref{lem:orbital-measure-family} it suffices to consider $\alpha=0$.

\begin{thr} \label{thm:orbital-exist-bis}
  Regular unquotiented Virasoro $\alpha$-orbital measures exist.
  In fact, the $0$-orbital measure can be realised as a function of a Brownian Bridge and a Lebesgue measure on $\T$ for the zero mode:
  \begin{equation}%
    \d\mu_{\sigma^2}(\phi) \coloneqq 
    \d \WS{\sigma^2}{0}{1}(\xi)
    \otimes \d\Theta, \qquad \text{with } \phi(t) = \Theta + \A_{\xi}(t)\enspace (\mathrm{mod}\, 1), \text{ for } \Theta\in [0, 1),
  \end{equation}
  where $\d\Theta$ is the Lebesgue measure on $[0,1)$ and with the Malliavin--Shavgulidze map
  \begin{equation} \label{eq:62}
    \A(\xi)(t) \coloneqq \A_{\xi}(t) 
    \coloneqq \frac{\int_{0}^t e^{\xi(\tau)}\d\tau}{\int_{0}^1 e^{\xi(\tau)}\d\tau}.
  \end{equation}
\end{thr}

Since it is obviously finite (thus regular), we can thus identify $\mu_{\sigma^2}$ with
the unique regular $0$-orbital measure $\measOrb{\sigma^2}{0}$ according to Theorem~\ref{thm:uniqueness}, up to normalisation.

Since the $0$-orbital measure is a function of a Brownian Bridge, its partition function
should be the same as that of the Brownian Bridge, and it is natural to normalise it to be
\begin{equation}
  \PartF{\sigma^2}{0} = \frac{1}{\sqrt{2\pi \sigma^2}}.
\end{equation}
The right-hand side coincides (up to a multiplicative constant independent of $\sigma^2$) with the natural interpretation of
the partition function of the Brownian Bridge, for example, defined in terms of the $\zeta$-regularised determinant of the Dirichlet Laplacian
on $[0,1]$, see Section~\ref{sec:BB}.

A related approach to the calculation of the partition function is via ``bosonisation'' of hyperbolic ($\alpha^{2} < 0$) orbital measures,
see \citep{AlekseevBosonization}. %

\subsection{Variation of the metric and Schwarzian correlation functions}

Theorem~\ref{thrMeasureSLInv} can be generalised in terms of a non-constant metric on the circle, $\rho^2\colon \T \to \R_+$.
Similar to \citep[Appendix~C]{StanfordWittenFermionicLocalization},
the Schwarzian Field Theory with background metric $\rho^2$ on $\T$ should formally be given by
\begin{equation} \label{eq:1-h}
  \d\FMeas{\rho}(\phi)
  =
  \exp\left\{
    \int_{\T} \Schw(\tan (\pi \phi -\tfrac{\pi}{2}),\tau) \, \frac{\d\tau}{\rho(\tau)}
  \right\}
  \frac{\prod_{\tau \in \T}\frac{\d\phi(\tau)}{\phi'(\tau)}}{\SL(2, \R)} ,
\end{equation}
where $\rho = \sqrt{\rho^2}$ is the positive square root of the metric $\rho^2$.
Thus the constant choice $\rho(\tau)=\sigma^2$ for all $\tau \in\T$ corresponds to \eqref{eq:1}.
The proof of the following theorem  is outline in Section~\ref{sec:metric}.

\begin{thr} \label{thr:2}
  For $\rho \colon \T \to \R_+$ in $C^1(\T)$,
  there is a unique (up to a multiplicative constant) $\SL(2,\R)$-invariant Borel measure $\FMeasSL{\rho}$ on $\Diff^1(\T)$
  that satisfies the expected change of variable formula
  \begin{equation}    \label{eqDiffeoMeasureChange-metric}
    \frac{\d \psi^{\ast}\!\FMeasSL{\rho}(\phi)}{\d\FMeasSL{\rho}(\phi)} =
    \frac{\d \FMeasSL{\rho}(\psi \circ \phi)}{\d\FMeasSL{\rho}(\phi)} = 
    \exp\left\{
      \int_{\T} \Big[\Schw(\tan(\pi\psi-\tfrac{\pi}{2}),\phi(\tau))-2\pi^2\Big]\, \phi'(\tau)^2 \frac{\d \tau}{\rho(\tau)} \right\},
  \end{equation}
  for any $\psi\in \Diff^3(\T)$,  and has a quotient $\FMeasSL{\rho}/\SL(2,\R)$ that is a finite Borel measure.
  This measure has total mass
  \begin{equation} \label{e:Z-general}
    \PartFSL(\rho) = \exp\left\{\frac{1}{2}\int \frac{\rho'(\tau)^2}{\rho(\tau)^3}\, \d\tau\right\} \PartFSL(\sigma^2_{\rho}), \qquad \text{where $\sigma^2_{\rho} =\int \rho \d\tau$,}
  \end{equation}
  where $\PartFSL(\sigma^2)$ denotes the partition function \eqref{e:Zsigma2},
  and we again imposed a normalisation consistent with the $\zeta$-regularisation of
  the partition function of a Brownian Bridge (cf.\ Section~\ref{sec:metric}). %
\end{thr}

In \citep[Appendix~C]{StanfordWittenFermionicLocalization}, the `correlation functions' of the Schwarzian Field Theory
are formally defined by differentiation of the partition function with respect to the metric:
\begin{equation}
  \label{eq:18}
  \begin{aligned}
    \langle\Schw(\tau_1)\cdots\Schw(\tau_n)\rangle_{\sigma^2} &\coloneqq \frac{1}{\PartFSL(\sigma^2)} \frac{\partial^n}{\partial \epsilon_1 \cdots \partial \epsilon_n} \PartFSL(\rho(\epsilon_1,\dots,\epsilon_n)),
    \\
    \frac{1}{\rho}(\epsilon_1,\dots,\epsilon_n) &= \frac{1}{\sigma^2} + \epsilon_1 \delta_{\tau_1} + \cdots + \epsilon_n \delta_{\tau_n}.
  \end{aligned}
\end{equation}
These `correlation functions' can be computed from the formula \eqref{e:Z-general}.
For example,
\begin{equation}\label{eq:22}
  \langle \Schw(0)\rangle_{\sigma^2}
  = 2\pi^2 + \tfrac32\sigma^{2}
\end{equation}
and
\begin{equation}\label{eq:23}
  \langle \Schw(0)\Schw(\tau)\rangle_{\sigma^2}
  = [4\pi^{4} + 10\pi^{2}\sigma^{2} + \tfrac{15}{4}\sigma^{4}] - 2\sigma^{2}[2\pi^{2} +\tfrac32 \sigma^{2}]\,\delta(\tau) - \sigma^{2}\delta''(\tau),
\end{equation}
see Section~\ref{sec:appendix-formal-corr}. In the relation to Liouville Field Theory, these correlators correspond to stress-energy tensor correlation
functions, see \citep[Appendix~A]{ConformalBootstrap}.

Since the normalisation of the partition function involves $\zeta$-regularisation %
of the Brownian Bridge (see Section~\ref{sec:metric}),
the correlation functions
defined by \eqref{eq:18} are not obviously expectations of random variables.
In fact, the Schwarzian derivative $\Schw_f(\tau)$ of a function $f\colon \T \to \R$ is only defined if $f \in C^3$ while
the support of the Schwarzian Field Theory measure $\d\FMeas{\sigma^2}$ only has regularity $C^{3/2-}$.
A finite-difference-type regularisation of the Schwarzian derivative is given by 
\emph{cross-ratios}
\begin{equation} \label{e:obs}
  \obs{\phi}{s}{t}=
  \frac{\pi \sqrt{\phi'(t)\phi'(s)}}{\sin(\pi[\phi(t)-\phi(s)])} \qquad\text{for}\;\, s\neq t.
\end{equation}
These still respect the $\SL(2,\R)$-invariance and are well-defined for $C^{1}$-functions.
For $C^3$-functions, in the limit of infinitesimally close end-points, the cross-ratios approximate the Schwarzian derivative,
see \citep{LosevCorr}.

In \citep{LosevCorr}, the (probabilistically well-defined) correlation functions of cross-ratios are explicitly computed,
confirming the predictions of \citep{ConformalBootstrap} obtained using the conformal bootstrap.
It is further shown that in the limit $t-s \to 0$
these coincide with the above Schwarzian correlation functions obtained by differentiating the partition function.
This confirms that correlation functions obtained from $\zeta$-regularised partition functions agree with honest probabilistically
defined ones, but it would be interesting to have a more general and conceptual understanding of this.

In \citep{JT_Wilson_line,MR4058854}, these cross-ratio observables are related to Wilson lines in the gauge theory formulation of JT gravity.

\subsection{Related probabilistic literature}
\label{sec:literature}

We will not survey the vast literature in physics related to the Schwarzian Field Theory and the SYK model,
but refer to \citep{MaldacenaStanford,KitaevJosephine} for a starting point on the SYK model
and \citep{SaadShenkerStanford2019,JT_Wilson_line,MR4058854} for a starting point on its relation to JT gravity.
Further physical perspectives on the construction carried out in this paper can be found in \mbox{\citep{BelokurovShavgulidzeExactSolutionSchwarz, BelokurovShavgulidzeCorrelationFunctionsSchwarz, BelokurovShavgulidze3, BelokurovShavgulidze4}}.
In the following, we do mention some other related probabilistic references.

The Schwarzian Field Theory is formally related to a degenerate limit of Liouville Field Theory \citep{SYKasLQM,ConformalBootstrap},
and the conformal bootstrap and the DOZZ formula applied in this context has been used to predict the correlation functions of the Schwarzian Field Theory \citep{ConformalBootstrap}.
While much progress has been made on the mathematical justification of Liouville Field Theory \citep{MR4060417,2005.11530},
see \citep{2403.12780} for a review, and it would be very interesting to explore this connection,
this paper, \citep{LosevCorr}, and \citep{LosevLDP} only use standard stochastic analysis.

Random homeomorphisms of $\T$ have also been studied in the context of random conformal welding \citep{MR2892610,MR3551203},
where given a random homeomorphism of the circle one constructs an associated random Jordan curve in the plane.
The Schwarzian Field Theory provides a different natural random diffeomorphism of the circle,
and it would be interesting to explore the associated random conformal welding.
We also remark that the space $\Diff^1(\T)/\SL(2,\R)$ has also appeared in the study of large deviations of
SLE \citep{MR4417203}.

Another motivation is the characterisation and uniqueness of Euclidean field theories, for which the action does not naively make sense,
and a desirable direction is to characterise such field theory measures in terms of change of measure or Schwinger--Dyson type formulas.
See \citep{GubinelliHofmanova} for some developments of this question in context of the $\varphi^4_3$ theory
and \citep{BaverezJego} for recent progress on the CFT related to the SLE loop measure (see also \citep{GordinaQianWang}).
Our uniqueness result for the Schwarzian Field Theory provides an example of such a problem with geometric symmetry.

Some motivation for the construction of quasi-invariant measures on diffeomorphism groups (and loop groups),
of which the Schwarzian Theory is an example,
has been the construction of unitary representations of those. %
The earliest references concerning quasi-invariant measures on $\Diff^1(\T)$ appear to go back to Shavgulidze and collaborators such as
\citep{shavgulidze_example_1978}, and we refer to \citep{BogachevMalliavin} for further discussion and  references.
The Malliavins also considered such measures \citep{malliavin_infinitesimally_1991},
and 
diffusion on such spaces (and quotients) and associated Wiener measures were studied in follow-up works such as
\citep{malliavin_heat_2005,airault_quasi-invariance_2006}.

Finally, we mention that the theory of path integrals for coadjoint orbits of loop group extension of compact Lie groups
is somewhat well-developed.
On a formal level, these orbits again carry a natural symplectic structure and the path integral associated to the Hamiltonian generating the $\mathrm{U}(1)$ action is of Duistermaat--Heckman form.
However, this case looks simpler as the relevant Hamiltonian is simply the Dirichlet energy associated with a Lie algebra valued Brownian Bridge.
Bismut has developed an analytical approach to the calculation of the corresponding heat kernels via a rigorous Duistermaat--Heckman-type deformation involving hypoelliptic Laplacians, see, e.g., \citep{bismut_hypoelliptic_2015}.

As already discussed in Section~\ref{sec:intro-schwarzian}, our construction of the Schwarzian Field Theory is closely inspired by
the one proposed in \citep{BelokurovShavgulidzeExactSolutionSchwarz}, which is lacking some mathematical rigor.
For example, the proof the required change of variable formulas
as well as the existence of the  quotient measures on $\Diff^1(\T)/\SL(2,\R)$ are somewhat subtle, and
our version fills the gaps.
Beyond these more technical points, our uniqueness result for the measure is new
and also relies on a new approach to compute partition functions from generalisation of the change of variables formula.

\subsection{Preliminaries and notation}
\label{sec:prelim}

We write $\D_r = \left\{z\in \Compl\colon |z|<r\right\}$ for the open disk of radius $r$ and $\D=\D_1$ for the open unit disk.
The unit circle is denoted by $\T=[0,1]/\{0\sim 1\}$,
and $\Diff^k(\T)$ is the set of orientation-preserving $C^k$-diffeomorphisms of $\T$, i.e.\ satisfying $\phi'(\tau) > 0$. Note that $\Diff^k(\T)$ is not a linear space.
The topology on $\Diff^k(\T)$ is the natural one given by the identification of $\phi$
with $\xi \in C^{k-1}[0,1]$ and $\Theta \in \T$ via the Malliavin--Shavgulidze map \eqref{e:phi-exp},
which makes $\Diff^{k}(\T)$ a Polish (separable completely metrisable) space as well as a topological group.
The same topology is induced by viewing $\Diff^k(\T)$ as a subspace of $C^k(\T)$.

It will also be useful to consider reparametrisations of $[0,1]$, or more general intervals $[0,T]$, whose derivatives are not periodic.
We write $\Diff^k [0, T]$ for the set of orientation-preserving $C^k$-diffeomorphisms of $[0, T]$, i.e.\ satisfying $\phi'(t)>0$, $\phi(0) = 0$, and $\phi(T) = T$.
In particular, the derivatives do not have to match at the endpoints.
We further set $\Cfree[0,T] = \left\{f\in C[0,T]\, | \, f(0)=0\right\}$, and $C_0[0,T] = \left\{f\in C[0,T]\, | \, f(0)=f(T) = 0\right\}$.

The projective special linear group is $\SL(2,\R)=\nSL(2,\R)/\{\pm 1\}$ where $\nSL(2,\R)$ consists of all
matrices $M = \bigl( \begin{smallmatrix}a & b\\ c & d\end{smallmatrix}\bigr)$ with real entries and determinant $1$.
The action of $\pm M\in \SL(2,\R)$ on $\phi \in \T$ is
\begin{equation}
  f\mapsto M \circ f = \frac{a f + b}  {cf+d}, \qquad\text{where } f = \tan(\pi\phi - \tfrac{\pi}{2}).
\end{equation}
We may identify $\SL(2,\R)$ with its orbit at $\mathrm{id}_{\T} \in \Diff^{1}(\T)$, equivalently parametrised by\footnote{This
  parametrisation is the restriction to the boundary of the action of $\SL(2,\R)$ as a conformal map of the unit disk onto itself,
  see for example \cite[Section~6.2]{MR1738432}.}
\begin{equation}\label{eqSLTransformFormula}
\phi_{z, \ang}(t) =  \ang -\frac{i}{2\pi} \ln \frac{e^{i2\pi t}-z}{1-\bar{z}e^{i2\pi t}} \quad (\text{mod } 1), \qquad \text{for } z\in \mathbb{D} \coloneqq \{z\in\mathbb{C}: |z|<1\}, \, \ang\in \T.
\end{equation}
Up to normalisation, the Haar measure on $\SL(2,\R)$ then takes the form\footnote{See for
  example \cite[Lemma~9.16]{MR2723325} which states that the Haar measure on $\SL(2,\R)$ is given as
the uniform measure on circle (corresponding to $\d a$) and the hyperbolic measure on the upper half plane $\mathbb{H}$.
In \eqref{eqSLHaarMeasureFormula} we have parametrised the hyperbolic measure by the Poincar\'e disk $\D$ instead of $\mathbb{H}$.}
\begin{equation}\label{eqSLHaarMeasureFormula}
\d\Haar (\phi_{z,a}) = \frac{4 \rho \d\rho \, \d\theta \, \d\ang}{(1-\rho^2)^2}, \qquad \text{where } z = \rho\, e^{i 2\pi\theta},
\end{equation}
and we always assume this normalisation for the Haar measure.
One may check that the subspace topology on the $\{\phi_{z,a}\}_{z\in \mathbb{D}, a\in \T}$ inherited from $\Diff^{1}(\T)$ agrees with the topology on $\SL(2,\R)$.
Hence, the $\SL(2,\R)$-orbit at $\mathrm{id}_{\T}$ is a faithful embedding of $\SL(2,\R)$ as a subgroup of $\Diff^1(\T)$.
As a consequence, the action of $\SL(2,\R)$ on $\Diff^{1}(\T)$ is a \emph{proper} group action.

Finally, we recall the definition of the Schwarzian derivative \eqref{eq:Schw-def} and the chain rule \eqref{e:Schw-chain}.
The chain rule implies that the Schwarzian action can be written as
\begin{equation} \label{e:Schw-phi}
  \Schw(\tan(\alpha\phi-\tfrac{\alpha}{2}), \tau)= \Schw(\phi,\tau)+ 2\alpha^2 \phi'(\tau)^2
\end{equation}
where we used that $\Schw(\tan(\alpha\phi-\tfrac{\alpha}{2}),\phi) = 2\alpha^2$. In particular,
\begin{equation}
\begin{aligned} \label{e:Schw-tan-chain-bis}
  \Schw(\tan(\alpha(\psi\circ\phi)-\tfrac{\alpha}{2}), \tau)
  &= \Schw(\psi\circ \phi,\tau)+ 2\alpha^2 (\psi\circ \phi)'(\tau)^2
    \\
  &= \Schw(\phi,\tau) + \Big[ \Schw(\psi, \phi(\tau))+ 2\alpha^2 (\psi'(\phi(\tau))^2 \Big] \phi'(\tau)^2
    \\
  &= \Schw(\tan(\alpha\phi-\tfrac{\alpha}{2}),\tau) + \Big(\Schw(\tan(\alpha\psi-\tfrac{\alpha}{2}), \phi(\tau)) -2\alpha^2\Big)\phi'(\tau)^2
    ,
\end{aligned}
\end{equation}
where we used \eqref{e:Schw-phi} on the first and third line and the chain rule \eqref{e:Schw-chain} on the second line.

\section{Uniqueness and partition function of orbital measures}
\label{sec:orbital}

In this section we first introduce a \emph{pinned} variant of orbital measures (essentially fixing $\phi(t_{0}) = 0$ for some $t_{0}\in\T$).
We derive a generalised change of variables formula (Proposition~\ref{prpWSMeasureChangePinned}) for these pinned measures.
A direct consequence of the latter is the ``boundary defect trick'' (Proposition~\ref{prop:11}), which is a very useful property for concrete calculation with the (pinned) orbital measures (e.g.\ in \citep{LosevCorr} is is instrumental for the derivation of cross-ratio correlation functions).
In particular, we will use it to prove Theorem~\ref{thm:partition-function}, i.e.\ to calculate the $\alpha$-dependence of the partition functions for $\alpha$-orbital measures.
Ideas from the proof are also applied to show uniqueness of orbital measures (Theorem~\ref{thm:uniqueness}).

\subsection{Pinned orbital measures}
\label{sec:pinned}

From Remark~\ref{rmrk:U1} we recall that unquotiented Virasoro $\alpha$-orbital measures are invariant
under the $\mrm{U}(1)$ action $\phi \mapsto \phi+\theta$ where the angle $\theta\in \T$ is identified with an element of $\mrm{U}(1)$.
It will be convenient to consider \emph{pinned} measures
in which a representative of the $\mrm{U}(1)$ orbits is fixed
(which may be interpreted as ``gauge-fixing'' with respect to the global $\mrm{U}(1)$ symmetry).
For an unquotiented $\alpha$-orbital measure $\measOrb{\sigma^2}{\alpha}$ and $t_{0} \in \T$ we define the measure $\measPin{\sigma^2}{\alpha}{t_{0}}$ via
\begin{equation}
  \label{eq:59}
  \int F(\phi) \d\measPin{\sigma^2}{\alpha}{t_0}(\phi)\,
  \coloneqq
  \int F(\phi - \phi(t_{0})) \d\measOrb{\sigma^2}{\alpha}(\phi).
\end{equation}

\noindent
From this definition it is clear that the distribution of $\phi'(\cdot)$ is the same under $\measPin{\sigma^2}{\alpha}{t_{0}}$ for any $t_{0}$
and under the measure $\measOrb{\sigma^2}{\alpha}$ without pinning.
Note that we do not assume that $\measOrb{\sigma^2}{\alpha}$ is translation-invariant
(i.e.\ under $\phi(t)\mapsto \phi(t+\theta)$); it will later be a consequence of the uniqueness theorem.

\begin{lmm}\label{lmmDiffMeasureDisintegration}
\begin{enumerate}
\item
For any $t_0\in \T$ the measure $\measPin{\sigma^2}{\alpha}{t_0}$ is a Radon measure.

\item 
  From the pinned measure $\measPin{\sigma^2}{\alpha}{t_0}$ defined in \eqref{eq:59} we can recover the original unquotiented measure by adding a uniform random shift:
  \begin{equation}
    \label{eq:61}
    \int \int_{\T} F(\phi + \theta) \d\theta\d\measPin{\sigma^2}{\alpha}{t_0}(\phi)
    = \int F(\phi) \d\measOrb{\sigma^2}{\alpha}(\phi),
  \end{equation}
\item
The family of measures $(\measPin{\sigma^2}{\alpha}{t_0})_{t_{0}\in\T}$ is uniquely characterised by the disintegration identity
  \begin{equation}
    \label{eq:65}
    \d\measOrb{\sigma^2}{\alpha}(\phi) = \int_{\T}\dd{t_{0}}\,  \phi'(t_{0}) \d\measPin{\sigma^2}{\alpha}{t_0}(\phi),
  \end{equation}
  with the requirement that $\measPin{\sigma^2}{\alpha}{t_0}$ is supported in $\{\phi\in\Diff^{1}(\T)\colon \phi(t_{0}) = 0\}$
  and that $t_0 \mapsto \measPin{\sigma^2}{\alpha}{t_0}$ is continuous (in the sense of Appendix~\ref{sec:disintegration}, see Definition~\ref{defMeasureCont}).
\end{enumerate}
\end{lmm}

\begin{proof}
Using the the definition of the pinned measure \eqref{eq:59}, invariance of $\T$ under $\T+a$ for each $a\in \T$,
the invariance of $\measOrb{\sigma^2}{\alpha}$ under $\phi\mapsto \phi+\theta$ for each $\theta\in \T$ gives \eqref{eq:61}:
\begin{equation}
  \begin{aligned}
   \int \int_{\T} F(\phi + \theta) \d\theta\d\measPin{\sigma^2}{\alpha}{t_0}(\phi)
  &=
    \int \int_{\T}  F(\phi -\phi(t_0) + \theta) \d\theta\d\measOrb{\sigma^2}{\alpha}(\phi)
    \\
  &=
    \int \int_{\T}  F(\phi + \theta) \d\theta  \d\measOrb{\sigma^2}{\alpha}(\phi)
    \\
  &=
    \int_{\T} \d\theta \int F(\phi) \d\measOrb{\sigma^2}{\alpha}(\phi)
  =
    \int F(\phi) \d\measOrb{\sigma^2}{\alpha}(\phi)
    .
  \end{aligned}
\end{equation}

We now prove that $\measPin{\sigma^2}{\alpha}{t_0}$ is locally finite (which implies that it is a Radon measure, since $\Diff^1(\T)$ is a Polish space).
Fix any $\phi_0\in \Diff^1(\T)$.
Since the unpinned measure $\measOrb{\sigma^2}{\alpha}$ is locally finite (as it is Radon by definition), there exists a continuous bounded function $F_{\phi_0}\colon \Diff^1(\T)\to [0, \infty)$ such that $F_{\phi_0}(\phi_0) > 0$ and $\int F_{\phi_0}(\phi)\d \measOrb{\sigma^2}{\alpha}<\infty$.
Consider $G_{\phi_0}(\phi) =  \int_{\T} F_{\phi_0}(\phi + \theta) \d \theta$, which is also a continuous bounded non-negative function on $\Diff^1(\T)$ (continuity follows by dominated convergence).
We also have that $G_{\phi_0}(\phi_0)>0$, since $F_{\phi_0}(\phi_0)>0$.
Moreover, for any $\theta\in \T$ we have $G_{\phi_0}(\phi+\theta) = G_{\phi_0}(\phi)$.
Therefore, from \eqref{eq:61} we get that 
\begin{equation}\label{eqLemmaDisintegrRadonBound}
\int G_{\phi_0}(\phi) \d\measPin{\sigma^2}{\alpha}{t_0}(\phi) =  \int F_{\phi_0}(\phi)  \d\measOrb{\sigma^2}{\alpha}(\phi)<\infty,
\end{equation}
which implies that $\measPin{\sigma^2}{\alpha}{t_0}(\phi)$ is finite in the neighbourhood of $\phi_0$.

For the disintegration identity \eqref{eq:65}, one has
\begin{equation}
  \begin{aligned}
    \int F(\phi) \int_{\T}\dd{t_{0}}\,  \phi'(t_{0}) \d\measPin{\sigma^2}{\alpha}{t_0}(\phi)
    &=
      \int \left(\int_{\T} \dd{t_0} \, F(\phi-\phi(t_0))  \phi'(t_{0}) \right) \d\measOrb{\sigma^2}{\alpha}(\phi)
    \\
    &=
      \int \left(\int_{\T} \dd{\theta} \, F(\phi+\theta) \right) \d\measOrb{\sigma^2}{\alpha}(\phi)
    \\
    &=
      \int F(\phi) \d\measOrb{\sigma^2}{\alpha}(\phi).
  \end{aligned}
\end{equation}
In Appendix~\ref{sec:disintegration}, we include a general result that, for a given measure,
disintegration into a continuous family of measures is unique,
where continuity is understood in the sense of Definition~\ref{defMeasureCont}.
To be precise, for uniqueness, we apply Proposition~\ref{prop:disintegration}, where we take $X=\Diff^1(\T)$, $Y=\T$, the projection $\pi\colon X \to Y,\, \phi \mapsto \phi^{-1}(0) $, and the family of measures $\left\{\phi'(t_0)\measPin{\sigma^2}{\alpha}{t_0}\right\}_{t_0\in \T}$.
Then what is left to show is the continuity of the latter in $t_0$.
Since $\phi'(t_0)\measPin{\sigma^2}{\alpha}{t_0} = \phi'\left(\phi^{-1}(0)\right)\measPin{\sigma^2}{\alpha}{t_0}$ and the function $\phi\mapsto \phi'\left(\phi^{-1}(0)\right)$ is continuous on $\Diff^1(\T)$, it follows from Proposition~\ref{prop_measure_density_contin} that it is sufficient to check the continuity of $\left\{\measPin{\sigma^2}{\alpha}{t_0}\right\}_{t_0\in \T}$.
For this we take the family of open sets $U_{\phi_0} = \left\{\phi:\, G_{\phi_0}(\phi)>\tfrac{1}{2} G_{\phi_0}(\phi_0)\right\}$ parametrized by $\phi_0\in \Diff^1(\T)$, where $G_{\phi_0}$ is defined above, and check that these $U_{\phi_0}$ satisfy the properties of Definition~\ref{defMeasureCont}.

Firstly, it is obvious that $\cup_{\phi_0\in \Diff^1(\T)} U_{\phi_0} = \Diff^1(\T)$ since we always have $\phi_0 \in U_{\phi_0}$.
Secondly, for any $\phi_0$ we have that $\measPin{\sigma^2}{\alpha}{t_0}\left(U_{\phi_0}\right)$ are bounded uniformly in $t_0$ because of \eqref{eqLemmaDisintegrRadonBound}.
Finally, suppose that $F:\Diff^1(\T)\to [0, \infty)$ is a bounded continuous function such that $F(\phi)=0$ for $\phi\notin U_{\phi_0}$.
Normalizing $F$ we may assume  $F\leq 1$.
Observe that since for any $\theta\in \T$: $G_{\phi_0}(\phi+\theta) = G_{\phi_0}(\phi)$, 
we have
\begin{equation} \label{eqLemmaDisintegrUniformBound}
F(\phi-\phi(t_0)) 
< 
\frac{2}{G_{\phi_0}(\phi_0)} \, G_{\phi_0}(\phi-\phi(t_0))
=
\frac{2}{G_{\phi_0}(\phi_0)} G_{\phi_0}(\phi).
\end{equation}
Then, continuity of $t_0\mapsto \int F \d\measPin{\sigma^2}{\alpha}{t_0}$ follows from the definition \eqref{eq:59} and dominated convergence:
\begin{equation}
\lim_{t\to t_0} \int F(\phi - \phi(t)) \d\measOrb{\sigma^2}{\alpha}(\phi)
=
\int F(\phi - \phi(t_0)) \d\measOrb{\sigma^2}{\alpha}(\phi).
\end{equation}
Thus uniqueness of the disintegration identity holds.
\end{proof}

After having defined and characterised the pinned orbital measures,
we now show that they satisfy a modified change of variables formula, analogous to \eqref{eqDiffeoMeasureChangeUniquness}.
The statement will be somewhat more general, in that we consider the transformation behaviour under post-composition by a map $f \in \Diff^{1}(\T)\cap \Diff^{3}[0,1] = \{f\in \Diff^{3}[0,1]\colon f'(0)=f'(1)\}$, instead of just $\Diff^{3}(\T)$.
The former allows for discontinuities in the second and third derivatives at $0\sim 1$, see Section~\ref{sec:prelim}.
This is important for the ``boundary defect trick'' in Proposition~\ref{prop:11}.

\begin{prp}\label{prpWSMeasureChangePinned} 
Let $\measPin{\sigma^2}{\alpha}{t_0}$ be an  $\alpha$-orbital measure pinned at $t_0 \in \T$.
Let $f\in \Diff^{1}(\T)\cap \Diff^{3}[0,1]$. 
Then $f^{\ast} \measPin{\sigma^2}{\alpha}{t_0}$ is absolutely continuous with respect to $\measPin{\sigma^2}{\alpha}{t_0}$ and
\begin{multline}\label{eq:64}
  \frac{\d f^{\ast} \measPin{\sigma^2}{\alpha}{t_0}(\phi)}{\d  \measPin{\sigma^2}{\alpha}{t_0}(\phi)} =  
  \frac{1}{\sqrt{f'(0)f'(1)}}
  \\
  \times
  \exp\left\{ 
    \frac{1}{\sigma^2}\left[\frac{f''(0)}{f'(0)}-\frac{f''(1)}{f'(1)}\right]\phi'(t_0)
    +
    \frac{1}{\sigma^2}\int_{\T\backslash\{t_0\}} \left[ \Schw_f \big(\phi(\tau)\big) + 2\alpha^2 \big(f'(\phi(\tau))^2-1\big)\right]\phi'^{\,2}(\tau) \d \tau
  \right\}.
\end{multline}
Note that in the above $f'(0)=f'(1)$ holds by assumption, however we write it in this form to match the more general statement Proposition~\ref{prpBBMMeasureChange}.
\end{prp}

The expression in the exponential of \eqref{eq:64} can be rewritten as
\begin{equation}
  \frac{1}{\sigma^2}\int_{\T} \left[\Schw_f \big(\phi(\tau)\big)+2\alpha^2(f'(\phi(\tau))^2-1)\right]\phi'^{2}(\tau)\d \tau
\end{equation}
if we interpret $\Schw_f (\cdot)$ evaluated at $0$ as  $\big[\frac{f''(0)}{f'(0)}-\frac{f''(1)}{f'(1)}\big]$ multiplied by the delta measure. 
This interpretation is consistent with the usual formula $\Schw_{f} = \big(\frac{f''}{f'}\big)'-\frac12 \big(\frac{f''}{f'}\big)^2$.

\begin{proof}
First suppose that $f \in \Diff^{3}(\T)$ with $f(0) = 0$.
Then, on the one hand, \eqref{eq:65} implies
\begin{equation}
  \label{eq:63}
  \begin{aligned}
    \d f^{\ast}\!\measOrb{\sigma^2}{\alpha}(\phi)
    =
    \d\measOrb{\sigma^2}{\alpha}(f\circ \phi)
    &=  \int_{\T}\dd{t_{0}} f'(0) \phi'(t_{0}) \d\measPin{\sigma^2}{\alpha}{t_{0}}(f\circ \phi)
    \\
    &=  \int_{\T}\dd{t_{0}} f'(0) \phi'(t_{0}) \d f^{\ast}\!\measPin{\sigma^2}{\alpha}{t_{0}}(\phi)
      ,
  \end{aligned}
\end{equation}
and, on other hand, again using the disintegration identity \eqref{eq:65} for $\measOrb{\sigma^2}{\alpha}$,
  \begin{equation}\label{eq_change_of_var_gauge_fixed_1}
    \begin{aligned}
     \d f^{\ast}\!\measOrb{\sigma^2}{\alpha}(\phi)
     =
    \frac{\dd f^{\ast}\!\measOrb{\sigma^2}{\alpha}(\phi)}{\dd\measOrb{\sigma^2}{\alpha}(\phi)}
      \dd\measOrb{\sigma^2}{\alpha}(\phi) 
      &= \int_{\T}\dd{t_{0}} \phi'(t_{0}) \frac{\dd f^{\ast}\!\measOrb{\sigma^2}{\alpha}(\phi)}{\dd\measOrb{\sigma^2}{\alpha}(\phi)} \d\measPin{\sigma^2}{\alpha}{t_{0}}(\phi).
    \end{aligned}
\end{equation}

In both cases, the integrand in the disintegration on the right-hand side is continuous in $t_0$ in the sense of Definition~\ref{defMeasureCont}.
Indeed,
since the family of measures $\left\{\measPin{\sigma^2}{\alpha}{t_{0}}\right\}_{t_0\in \T}$ is continuous in $t_0$, it is easy to deduce that the family $\left\{f^{\ast}\!\measPin{\sigma^2}{\alpha}{t_{0}}\right\}_{t_0\in \T}$ is also continuous in $t_0$.
Thus, by Proposition~\ref{prop_measure_density_contin}, the family $\left\{f'(0) \phi'(t_{0})\d f^{\ast}\!\measPin{\sigma^2}{\alpha}{t_{0}}(\phi)\right\}_{t_0\in \T}$ appearing in \eqref{eq:63} is also continuous in $t_0$.
  For \eqref{eq_change_of_var_gauge_fixed_1}, we start from the fact that
  the integrand on the right-hand side is continuous in $\phi$
  by \eqref{eqDiffeoMeasureChangeUniquness}.
  Therefore, by Proposition~\ref{prop_measure_density_contin}, the family of measures 
appearing on the right-hand side of \eqref{eq_change_of_var_gauge_fixed_1} is continuous in $t_0$.
  
Thus, from \eqref{eq:63}, \eqref{eq_change_of_var_gauge_fixed_1}, uniqueness of disintegration (see Proposition~\ref{prop:disintegration}),
and since $f'(0)=f'(1)$, 
\begin{equation}\label{eq_delta_approx_0}
  \begin{aligned}
    \frac{\d f^{\ast}\!\measPin{\sigma^2}{\alpha}{t_{0}}(\phi)}{\d\measPin{\sigma^2}{\alpha}{t_{0}}(\phi)}
    &= \frac{1}{f'(0)} \frac{\dd f^{\ast}\!\measOrb{\sigma^2}{\alpha}(\phi)}{\dd\measOrb{\sigma^2}{\alpha}(\phi)}
    \\
    &
      =\frac{1}{\sqrt{f'(0)f'(1)}}
      \exp\left\{ 
      \frac{1}{\sigma^2}\int_{\T} \left[ \Schw_f \big(\phi(\tau)\big) + 2\alpha^2 \big(f'(\phi(\tau))^2-1\big)\right]\phi'^{\,2}(\tau) \d \tau
      \right\}.
  \end{aligned}
\end{equation}
which implies \eqref{eq:64} for $f\in\Diff^{3}(\T)$.
  
\medskip

For general $f\in \Diff^{1}(\T) \cap \Diff^{3}[0,1]$ we prove \eqref{eq:64} by passing to the limit of \eqref{eq:64} in $f$. We consider a sequence $f_{n} \in \Diff^{3}(\T)$ with $f_{n}(0) = 0$ such that $f_{n} \to f$ in $\Diff^1(\T)$,
and which satisfies the following two properties.
Firstly, for any $\phi\in\Diff^1(\T)$ with $\phi(t_0) = 0$,
\begin{multline}\label{eq_delta_approx_1}
\lim_{n\to \infty}
\int_{\T} \left[ \Schw_{f_n} \big(\phi(\tau)\big) + 2\alpha^2 \big(f_n'(\phi(\tau))^2-1\big)\right]\phi'^{\,2}(\tau) \d \tau\\
=
\left[\frac{f''(0)}{f'(0)}-\frac{f''(1)}{f'(1)}\right]\phi'(t_0)
    +
\int_{\T\backslash\{t_0\}} \left[ \Schw_f \big(\phi(\tau)\big) + 2\alpha^2 \big(f'(\phi(\tau))^2-1\big)\right]\phi'^{\,2}(\tau) \d \tau.
\end{multline}
Secondly, for any $\phi_0\in \Diff^1(\T)$ there exist an open neighbourhood $U_{\phi_0} \subset \Diff^1(\T)$ containing $\phi_{0}$, and a constant $M>0$, such that for any $n\in \N$ and any $\phi\in U_{\phi_0}$,
\begin{equation}\label{eq_delta_approx_2}
\left|\int_{\T} \left[ \Schw_{f_n} \big(\phi(\tau)\big) + 2\alpha^2 \big(f_n'(\phi(\tau))^2-1\big)\right]\phi'^{\,2}(\tau) \d \tau\right|
< M.
\end{equation}
In what follows we will also assume that $U_{\phi_0}$ is sufficiently small, so that for all $\phi_0\in\Diff^1(\T)$ we have that $\measPin{\sigma^2}{\alpha}{t_0}\left(U_{\phi_0}\right)<\infty$.

Such a sequence $\{f_n\}_n$ can be constructed by standard arguments. 
For example, we can take $f_n(\tau) = (\delta_{n} \ast f)(\tau) - (\delta_{n} \ast f)(0)$ for some approximate identity $\{\delta_{n}\}_{n\to \infty}$.

Since \eqref{eq_delta_approx_1} and \eqref{eq_delta_approx_2} can be deduced for such a sequence $\{f_n\}_n$ by fairly standard elementary methods we provide details in Lemma~\ref{lem:proof_f_n_construct} below this proof. %
We first show how to use the sequence $\{f_n\}_n$ to finish the proof of the proposition.

\medskip

Having the sequence $f_n$ as described above, we fix some $\phi_0\in\Diff^1(\T)$.
By continuity of the group product $\Diff^1(\T)\times \Diff^1(\T) \to \Diff^1(\T)$ given by $(g, \phi) \mapsto g\circ \phi$ we get that there exist $N>0$ and an open $V_{\phi_0}\subset U_{\phi_0}$ such that for any $n>N$ and any $\phi\in V_{\phi_0}$ we have $f_n \circ \phi \in U_{f \circ \phi_0}$. 
Then for any continuous bounded function $F:\Diff^1(\T)\to[0,\infty)$ supported in $V_{\phi_0}$ we have that
\begin{equation}\label{eq_delta_approx_3}
\begin{aligned}
\lim_{n\to \infty}  
\int F(\phi)\d f_n^{\ast} \measPin{\sigma^2}{\alpha}{t_0}(\phi)
&=
\lim_{n\to \infty}  
\int F(f_n^{-1}\circ \phi) \d\measPin{\sigma^2}{\alpha}{t_0}(\phi)\\
&=
\int F(f^{-1}\circ \phi) \d\measPin{\sigma^2}{\alpha}{t_0}(\phi)
=
\int F(\phi)\d f^{\ast}\!\measPin{\sigma^2}{\alpha}{t_0}(\phi),
\end{aligned}
\end{equation}
where we used dominated convergence, the bound
\begin{equation}
F(f_n^{-1}\circ \phi) \leq \sup F \cdot \one_{V_{\phi_0}}(f_n^{-1}\circ \phi) \leq \sup F \cdot \one_{U_{f \circ \phi_0}}(\phi),
\end{equation}
and the fact that $\measPin{\sigma^2}{\alpha}{t_0}\left(U_{f\circ \phi_0}\right)<\infty$ (here, $\one_A$ denotes the indicator function of $A$).
On the other hand, from \eqref{eq_delta_approx_0} applied to $f_n$, \eqref{eq_delta_approx_1}, \eqref{eq_delta_approx_2}, and dominated convergence, we obtain
\begin{equation}\label{eq_delta_approx_4}
\begin{aligned}
&\lim_{n\to \infty}  
\int F(\phi)\d f_n^{\ast}\measPin{\sigma^2}{\alpha}{t_0}(\phi) \\
&=
\lim_{n\to \infty}  
\int F(\phi) 
\frac{1}{f_n'(0)}
  \exp\left\{ 
    \frac{1}{\sigma^2}\int_{\T} \left[ \Schw_{f_n} \big(\phi(\tau)\big) + 2\alpha^2 \big(f_n'(\phi(\tau))^2-1\big)\right]\phi'^{\,2}(\tau) \d \tau
  \right\} 
  \d \measPin{\sigma^2}{\alpha}{t_0}(\phi) \\
&= 
\int F(\phi) 
\frac{1}{f'(0)}
   \exp\left\{ \frac{1}{\sigma^2}
   I_f(\phi)
  \right\}
  \d \measPin{\sigma^2}{\alpha}{t_0}(\phi),
\end{aligned}
\end{equation}
where $I_f(\phi)$ denotes the right-hand side of \eqref{eq_delta_approx_1}.
Thus, combining \eqref{eq_delta_approx_3} and \eqref{eq_delta_approx_4}
\begin{equation}
\int F(\phi)\d f^{\ast}\!\measPin{\sigma^2}{\alpha}{t_0}(\phi)
=
\int F(\phi) 
\frac{1}{f'(0)}
   \exp\left\{\frac{1}{\sigma^2}
   I_f(\phi)
  \right\}
  \d \measPin{\sigma^2}{\alpha}{t_0}(\phi).
\end{equation}
Since this holds for any bounded $F$ supported in $V_{\phi_0}$ and any $\phi_0$, we get the desired result.
\end{proof}

\begin{lmm} \label{lem:proof_f_n_construct}
  There exist $f_n\in \Diff^3(\T)$ such that $f_n\to f$ in $\Diff^1(\T)$ as $n\to \infty$ and which satisfy
  \eqref{eq_delta_approx_1} and \eqref{eq_delta_approx_2}.
\end{lmm}

\begin{proof}
Define $f_n(\tau) = (\delta_{n} \ast f)(\tau) - (\delta_{n} \ast f)(0)$, where $\delta_n\in C^{\infty}(\T)$ is an approximate identity as $n\to \infty$. 
In other words, $\delta_n \geq 0$ and for any $h\in C(\T)$,
\begin{equation}\label{eq_f_n_limit_delta_def}
\lim_{n\to \infty}\left\|\left(\delta_n \ast h\right) - h\right\|_{C(\T)} = 0.
\end{equation}

Using the definition of the Schwarzian derivative $\Schw_{f_n}$,
\begin{multline}\label{eq_f_n_schw_decomposition}
\int_{\T} \left[ \Schw_{f_n} \big(\phi(\tau)\big) + 2\alpha^2 \big(f_n'(\phi(\tau))^2-1\big)\right]\phi'^{\,2}(\tau) \d \tau\\
=
\int_{\T}\frac{f_n'''\big(\phi(\tau)\big)}{f_n'\big(\phi(\tau)\big)} \phi'^{\,2}(\tau)\d\tau
+
\int_{\T}\left[-\frac{3}{2}\left(\frac{f_n''\big(\phi(\tau)\big)}{f_n'\big(\phi(\tau)\big)}\right)^2
+
2\alpha^2 \big(f_n'(\phi(\tau))^2-1\big)\right]\phi'^{\,2}(\tau) \d \tau \\
= 
I^{(1)}_n(\phi) +I^{(2)}_n(\phi)
\end{multline}
It is easy to see that $\sup_{n\in \N, \tau\in (0,1)}\{f_n'(\tau), 1/f_n'(\tau), |f_n''(\tau)|\}< \infty$, that $f_n'\to f'$ uniformly on $\T$ and that $f_n''\to f''$ pointwise on $(0, 1)$. 
Therefore, for any $\phi\in \Diff^1(\T)$,
\begin{equation}\label{eq_f_n_limit_2}
\lim_{n\to \infty}
I^{(2)}_n(\phi)
=
\int_{\T}\left[-\frac{3}{2}\left(\frac{f''\big(\phi(\tau)\big)}{f'\big(\phi(\tau)\big)}\right)^2
+
2\alpha^2 \big(f'(\phi(\tau))^2-1\big)\right]\phi'^{\,2}(\tau) \d \tau.
\end{equation}
Furthermore, if we denote $\psi_{\phi} = \phi^{-1}$ then
\begin{equation}\label{eq_f_n_int_2_rewrite}
I^{(1)}_n(\phi)
=
\int_{\T} \frac{f_n'''(\tau)}{f_n'(\tau)\psi_{\phi}'(\tau)}\d\tau
\end{equation}
Notice that if we denote 
\begin{equation}
g(\tau) = \begin{cases}
    f'''(\tau)       & \quad \text{if } \tau \neq 0\\
    0  & \quad \text{if } \tau = 0,
  \end{cases}
\end{equation}
then, in the sense of distributions, $f''' = \left[f''(0)-f''(1)\right]\delta + g$, where $\delta$ is the Dirac delta function at~$0$.
Thus,
\begin{equation}
f_n'''(\tau) = \left[f''(0)-f''(1)\right]\delta_n(\tau) + \left(\delta_n\ast g \right)(\tau).
\end{equation}
Notice that both $\delta_n$ and $\delta_n\ast g$ are bounded in $L^1(\T)$ uniformly in $n$, hence $f_n'''$ are bounded in $L^1(\T)$ uniformly in $n$ too.
Therefore, for any $\phi\in \Diff^1(\T)$
\begin{equation}\label{eq_f_n_limit_1}
\begin{aligned}
\lim_{n\to\infty}
I^{(1)}_n(\phi)
&=
\lim_{n\to\infty}
\int_{\T} \frac{f_n'''(\tau)}{f'(\tau) \psi_{\phi}'(\tau)}\d\tau 
+
\int_{\T} \frac{f_n'''(\tau)}{\psi_{\phi}'(\tau)} \left(\frac{1}{f_n'(\tau)}-\frac{1}{f'(\tau)}\right)\d\tau \\
&=
\frac{f''(0)-f''(1)}{f'(1)\psi_{\phi}'(0)}
+
\int_{\T\backslash \{t_0\}} \frac{f'''(\tau)}{\psi_{\pi}'(\tau)f'(\tau)}\d\tau\\
&=
\left[\frac{f''(0)}{f'(0)}-\frac{f''(1)}{f'(1)}\right]\phi'(t_0)
+
\int_{\T\backslash \{t_0\}}\frac{f'''\big(\phi(\tau)\big)}{f'\big(\phi(\tau)\big)} \phi'^{\,2}(\tau)\d\tau,
\end{aligned}
\end{equation}
where in the second equality we used that $\frac{1}{f_n'}-\frac{1}{f}$ converges to $0$ uniformly on $\T$ and \eqref{eq_f_n_limit_delta_def}.
Combining \eqref{eq_f_n_schw_decomposition}, \eqref{eq_f_n_limit_2} and \eqref{eq_f_n_limit_1} we obtain \eqref{eq_delta_approx_1}.

It is also easy to see that if $U\subset\Diff^1(\T)$ is open and such that $\sup_{\phi\in U, \tau\in \T} \{|\phi'(\tau)|, 1/|\phi'(\tau)|\} <\infty$, then from $\sup_{n\in \N, \tau\in (0,1)}\{f_n'(\tau), 1/f_n'(\tau), |f_n''(\tau)|\}< \infty$ it follows that
\begin{equation}\label{eq_f_n_bound_2}
\sup_{n\in\N, \, \phi\in U}\left| I^{(2)}_n(\phi)\right| < \infty.
\end{equation}
Moreover, since $f_n'''$ are bounded in $L^1(\T)$ uniformly in $n$, from \eqref{eq_f_n_int_2_rewrite} we also obtain that
\begin{equation}\label{eq_f_n_bound_1}
\sup_{n\in\N,\, \phi\in U}\left| I^{(1)}_n(\phi)\right| < \infty.
\end{equation}
Combining \eqref{eq_f_n_schw_decomposition}, \eqref{eq_f_n_bound_2} and \eqref{eq_f_n_bound_1} we get \eqref{eq_delta_approx_2}.
\end{proof}

\subsection{Partition function from change of variables}\label{sec:partition-function}

In this section, we prove Theorem~\ref{thm:partition-function}.
Given $t_0\in \T$, we write
\begin{equation}
  \label{eq:54}
  [F(\phi)]_{\sigma^{2}}^{\alpha,t_0}
  \coloneqq
  \int F(\phi) \, \dd\measPin{\sigma^2}{\alpha}{t_0}(\phi)
  =
  \int F(\phi) \exp\left\{\frac{2\alpha^{2}}{\sigma^{2}}\int \phi'^{\, 2}(t) \, \d t\right\}\, \dd\measPin{\sigma^2}{0}{t_0}(\phi)
\end{equation}
for the unnormalised expectation value with respect to the $\alpha$-orbital measure with pinning at $t_0$,
i.e.~these measures are supported on functions $\phi$ with $\phi(t_0) = 0$.
For functionals $F$ that depend only on the gradient $\phi'$, the expectations coincide with unpinned measure, which we denote by $[F]_{\sigma^2}^{\alpha}$.

  \begin{prp}[Boundary defect trick] \label{prop:11}
    Let $t_0 \in \T$. For $\alpha^{2} < \pi^{2}$, we have
    \begin{equation} \label{eq:108}
      \big[G(f_\alpha \circ \phi)\big]_{\sigma^2}^{\alpha,t_0}
      =
      \frac{\alpha}{\sin \alpha}
      \left[ G(\phi) \, \exp\left\{\frac{8\sin^{2}(\alpha/2)}{\sigma^{2}}\, \phi'(t_0)\right\}\right]_{\sigma^2}^{0,t_0},
\end{equation}
where
\begin{equation}
  f_\alpha(t)=
  \frac{1}{2}\left(\frac{1}{\tan \frac{\alpha}{2}} \tan\left(\alpha\left(t-\tfrac{1}{2}\right)\right)+1\right),
  \end{equation}
  and $\alpha/\sin\alpha=1$ and $f_\alpha(t)=t$ if $\alpha=0$. (Note that the function $f_\alpha$ is real even if $\alpha \in i\R$.)
  \end{prp}

\begin{proof}
It is easy to check that $f=f_\alpha$ defined in the statement satisfies $f(0)=0$, $f(1)=1$, and
\begin{equation}
\Schw_f(t) = 2\alpha^2,
\qquad
f'(0) = f'(1) = \frac{\alpha}{\sin \alpha},
\qquad
-\frac{f''(0)}{f'(0)} = \frac{f''(1)}{f'(1)} = 2\alpha \tan \frac{\alpha}{2}.
\end{equation}
It follows from the change of variables formula   \eqref{eq:64} for the pinned measure $\measPin{\sigma^2}{0}{t_0}$
that for any non-negative continuous functional $F$ on $\Diff^1(\T)$ we have 
\begin{multline} \label{eqLemmaMeasAlphaCalculationMeasureChange}
\int_{\Diff^1(\T)} F(\phi) \d\measPin{\sigma^2}{0}{t_0}(\phi) \\
= \frac{\sin \alpha}{\alpha}  \int_{\Diff^1(\T)} F(f\circ \phi) \,
\exp\left\{-\frac{4\alpha}{\sigma^2} \tan\frac{\alpha}{2} 
\cdot \phi'(t_0)+\frac{2\alpha^2}{\sigma^2}\int_{\T\backslash \{t_0\}} \phi'^{\, 2}(t) \d t\right\}
\d\measPin{\sigma^2}{0}{t_0}(\phi).
\end{multline}
Now we choose $F$ to be
\begin{equation}
F(\phi) =  G(\phi) \exp\left\{\frac{8 \sin^2 \frac{\alpha}{2}}{\sigma^2}\cdot \phi'(t_0)\right\},
\end{equation}
which guarantees that
\begin{equation}
F(f \circ \phi) = G(f\circ \phi) \exp \left\{\frac{4\alpha}{\sigma^2} \tan\frac{\alpha}{2} \cdot \phi'(t_0)\right\},
\end{equation}
and the claim follows.
\end{proof}

\begin{proof}[Proof of Theorem~\ref{thm:partition-function}]
  By assumption, the measure $\measOrb{\sigma^{2}}{\alpha}$ is regular, hence $\PartF{\sigma^2}{\alpha^{2}}\leq \PartF{\sigma^2}{\alpha_0^{2}} < \infty$ for $\alpha^{2} \leq \alpha^{2}_{0}$ with some $\alpha^{2}_{0} \in \RR$.
    We first consider $\alpha^{2} < \alpha_0^2$. %
    Then by \eqref{eq:108} with $G(\phi)=1$,
    \begin{equation}
      \label{eq:117}
      \begin{aligned}
        \de_{\alpha} \log\Big(\frac{\sin\alpha}{\alpha} [1]^{\alpha}_{\sigma^{2}}\Big)
        &=\de_{\alpha} \log\Big(\frac{\sin\alpha}{\alpha} [1]^{\alpha,t_0}_{\sigma^{2}}\Big)\\
        &=\de_{\alpha}\log \left[\exp\left\{\tfrac{8}{\sigma^2}\, \sin^{2}(\alpha/2) \phi'(t_{0})\right\}\right]^{0,t_0}_{\sigma^2}\\
        &= \frac{4}{\sigma^2}\sin(\alpha) \frac{
          \Big[\phi'(t_{0}) \exp\left\{\frac{8}{\sigma^2}\sin^{2}(\alpha/2) \phi'(t_{0})\right\} \Big]^{0,t_0}_{\sigma^2}}{\Big[ \exp\left\{\frac{8}{\sigma^2}\sin^{2}(\alpha/2) \phi'(t_{0})\right\} \Big]^{0,t_0}_{\sigma^2}}.
      \end{aligned}
    \end{equation}
    Applying \eqref{eq:108} again, with $G(\phi) = \phi'(0)$ in the numerator and $G(\phi)=1$ in the denominator above,
    \begin{equation}
      \begin{aligned}
        \de_{\alpha} \log\Big(\frac{\sin\alpha}{\alpha} [1]^{\alpha}_{\sigma^{2}}\Big)
        &= \frac{4}{\sigma^2}\sin(\alpha) \frac{[f_\alpha'(\phi(t_0)) \phi'(t_0)]^{\alpha,t_0}_{\sigma^2}}{[1]^{\alpha,t_0}_{\sigma^2}}\\
         &= \frac{4}{\sigma^2}\sin(\alpha) f_\alpha'(0) \frac{[\phi'(t_0)]^{\alpha,t_0}_{\sigma^2}}{[1]^{\alpha,t_0}_{\sigma^2}}\\
           &= \frac{4}{\sigma^2}\sin(\alpha) \frac{\alpha}{\sin(\alpha)} \frac{[\phi'(t_0)]^{\alpha}_{\sigma^2}}{[1]^{\alpha}_{\sigma^2}}
        ,
      \end{aligned}
    \end{equation}
    where we used that the law of $\phi'(\cdot)$ is the same under $[\cdot]^{0,t_{0}}_{\sigma^{2}}$ and the unquotiented measure $[\cdot]^{0}_{\sigma^{2}}$.
    Using that the left-hand side is independent of $t_0$ and that $\phi \in \Diff^1(\T)$,
    \begin{equation}
      1 = [\phi(1)-\phi(0)]^\alpha_{\sigma^2} = \int_{\T} \dd t_0\, [\phi'(t_{0})]^{\alpha}_{\sigma^2},
    \end{equation}
    therefore
    \begin{equation}
              \de_{\alpha} \log\Big(\frac{\sin\alpha}{\alpha} [1]^{\alpha}_{\sigma^{2}}\Big) = \frac{4\alpha}{\sigma^2}.
    \end{equation}
    It follows that, for $\alpha^2\leq\alpha_0^2$,
    \begin{equation}
      \label{eq:114}
      \frac{\sin\alpha}{\alpha} \exp\left\{-\frac{2\alpha^{2}}{\sigma^{2}}\right\} [1]_{\sigma^{2}}^\alpha
      =\frac{\sin\alpha_0}{\alpha_0} \exp\left\{-\frac{2\alpha_0^{2}}{\sigma^{2}}\right\} [1]_{\sigma^2}^{\alpha_0}.
    \end{equation}
    To extend the identity to general $\alpha^2\in \R$, we note that if $\alpha_1^2<\alpha_0^2$, then
    \begin{equation}
      \frac{\alpha_1\sin\alpha}{\alpha \sin\alpha_1}
      \cdot
      \frac{\PartF{\sigma^2}{\alpha}}{\PartF{\sigma^2}{\alpha_1}} =
      \frac{\left[\exp\left\{\frac{2(\alpha^{2}-\alpha_1^2)}{\sigma^{2}} \int \phi'^{\, 2}(\tau) \, \dd\tau\right\}\right]_{\sigma^{2}}^{\alpha_1}}{[1]_{\sigma^{2}}^{\alpha_1}}
      =
      \frac{\left[e^{\lambda X}\right]_{\sigma^{2}}^{\alpha_1}}{[1]_{\sigma^{2}}^{\alpha_1}}, \qquad \lambda =\alpha^2-\alpha_1^2
    \end{equation}
    is the moment-generating function of a non-negative random variable $X$. %
    For general $\alpha^2$ the result therefore follows by analytic continuation, see Lemma~\ref{lemmaAnalyticContinuationExponentialMoments} below.
  \end{proof}

\begin{lmm}\label{lemmaAnalyticContinuationExponentialMoments}
Let $\TechMeas$ be a non-negative measure on $\R_+$. 
Assume that there exists $\eps>0$ such that the exponential moment generating function $F(z) = \int \exp \left(z X\right)\d\TechMeas(X)$ exists for all $z\in [0, \eps)$. 
Assume further that for some $R>0$, $F(z)$ can be analytically continued for all $z\in \D_R$. 
Then $\int \exp \left(z X\right)\d\TechMeas(X)$ converges absolutely for all $z\in \D_R$, and is equal to the analytic continuation of $F(z)$ to $\D_R$.
\end{lmm}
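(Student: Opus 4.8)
The plan is to introduce the \emph{abscissa of convergence}
\begin{equation}
  t_\ast \coloneqq \sup\Big\{t\ge 0 : \textstyle\int e^{tX}\,\d\TechMeas(X)<\infty\Big\}\in(0,\infty],
\end{equation}
to observe that $t_\ast\ge\eps$ by hypothesis, and to reduce the lemma to the single inequality $t_\ast\ge R$. First I would record two routine facts. Since $X\ge 0$ on the support of $\TechMeas$, the map $t\mapsto\int e^{tX}\,\d\TechMeas(X)$ is non-decreasing on $\R$, hence finite for every real $s<t_\ast$ and equal to $+\infty$ for every real $s>t_\ast$. Because $|e^{zX}|=e^{\Real(z)X}$, it follows that $G(z)\coloneqq\int e^{zX}\,\d\TechMeas(X)$ converges absolutely on the half-plane $H\coloneqq\{z\in\Compl:\Real(z)<t_\ast\}$, and a standard Morera-plus-Fubini argument (equivalently, differentiation under the integral sign) shows that $G$ is holomorphic on $H$. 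Let $\widetilde F$ denote the given holomorphic continuation of $F$ to $\D_R$. On the convex, hence connected, open set $H\cap\D_R$, which contains the interval $[0,\min(\eps,R))$, the functions $G$ and $\widetilde F$ agree on a set with a limit point (namely that interval, since there both equal the original integral $F$), so they coincide on all of $H\cap\D_R$ by the identity theorem. Consequently, once $t_\ast\ge R$ is known we get $\D_R\subseteq H$, so the integral converges absolutely throughout $\D_R$ and equals there the analytic continuation $\widetilde F$ of $F$, which is exactly the assertion.

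It remains to prove $t_\ast\ge R$, and this is where the positivity of $\TechMeas$ enters essentially, in the spirit of the Landau--Pringsheim theorem for Laplace transforms. Suppose, for contradiction, that $t_\ast<R$. Since $0<t_\ast$, the origin lies in $H\cap\D_R$, and differentiating $G$ under the integral sign there (legitimate in the region of absolute convergence) together with $G=\widetilde F$ near $0$ gives
\begin{equation}
  \widetilde F^{(n)}(0)=\int X^n\,\d\TechMeas(X)\ge 0\qquad(n\ge 0).
\end{equation}
As $\widetilde F$ is holomorphic on the disk $\D_R$, its Taylor series at $0$ has radius of convergence at least $R$. Choose any real $b$ with $t_\ast<b<R$ (possible since $t_\ast<R$); then $b\in\D_R$ lies inside this radius, every term below is non-negative, and Tonelli's theorem permits the interchange
\begin{equation}
  \widetilde F(b)=\sum_{n\ge 0}\frac{b^n}{n!}\,\widetilde F^{(n)}(0)
  =\sum_{n\ge 0}\frac{b^n}{n!}\int X^n\,\d\TechMeas(X)
  =\int e^{bX}\,\d\TechMeas(X).
\end{equation}
But $b>t_\ast$ forces the right-hand side to be $+\infty$, contradicting $\widetilde F(b)\in\Compl$. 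Hence $t_\ast\ge R$, completing the proof.

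The only genuinely non-formal step is this last resummation: analytic continuation pins the Taylor coefficients of $\widetilde F$ at $0$ to be the moments $\int X^n\,\d\TechMeas$, these are non-negative precisely because $\TechMeas\ge0$ and $\TechMeas$ sits on $\R_+$, and Tonelli then rebuilds the divergent integral $\int e^{bX}\,\d\TechMeas$ at a real point $b$ beyond $t_\ast$. Everything else — monotonicity in the real variable, absolute convergence and holomorphy of $G$ on $H$, the identity-theorem matching with $\widetilde F$, and differentiation under the integral sign — is standard; the only bookkeeping worth flagging is the harmless degenerate case $t_\ast=\infty$ (where $t_\ast\ge R$ is immediate) and the choice of a possibly smaller $\eps$ so that $[0,\eps)$ sits inside both $H$ and $\D_R$.
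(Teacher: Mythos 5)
Your proof is correct. At its core it uses the same key mechanism as the paper: the Taylor coefficients of the continuation at $0$ are the moments $\int X^n \d\TechMeas(X) \geq 0$, analyticity on $\D_R$ forces the Taylor series to have radius of convergence at least $R$, and Tonelli at a real point converts the series of moments back into the integral $\int e^{bX}\d\TechMeas(X)$. The difference is organisational: the paper argues directly — it expands $F$ on $[0,\eps)$ into the moment series via Tonelli, concludes the series converges absolutely on $\D_R$, reapplies Tonelli to identify it with the integral on $[0,R)$, and then extends the identity to complex $z$ by the domination $|z^nX^n|\leq |z|^nX^n$ and absolute convergence — whereas you argue by contradiction in the Landau--Pringsheim style, introducing the abscissa of convergence $t_\ast$, proving holomorphy of $z\mapsto\int e^{zX}\d\TechMeas(X)$ on the half-plane $\Real z<t_\ast$, matching it with the continuation by the identity theorem, and then ruling out $t_\ast<R$ by the Tonelli resummation at a real $b\in(t_\ast,R)$. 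Your route is slightly longer but makes the convergence region and the identification for complex $z$ conceptually transparent (holomorphy plus identity theorem, no termwise manipulation off the real axis); the paper's direct version is shorter and needs no contradiction. Both are complete; the only steps you should make sure are justified in writing are the holomorphy of the Laplace transform on the half-plane and the differentiation under the integral at $0$, which you correctly flag as standard.
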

\begin{proof}
Since $\TechMeas$ supported on $\R_+$, by Tonelli's Theorem,
\begin{equation}
F(z) = \sum_{n\geq 0} z^n \frac{\int X^n \d\TechMeas(X)}{n!},
\end{equation}
for $z\in [0, \eps)$. Given that $F(z)$ is analytic in $\D_R$, we conclude that the right-hand side converges absolutely for $z\in \D_R$. Thus, by Tonelli's Theorem again, $\E \exp \left(z X\right)$ converges for $z\in [0,R)$ and is equal to $F(z)$. We can continue the equality for the whole disc $\D_R$, since $|z^n X^n|\leq |z|^n X^n$, and all expressions are absolutely convergent in $\D_R$.
\end{proof}

\subsection{Uniqueness of orbital measures}

In this section, we prove Theorem~\ref{thm:uniqueness}.

\begin{proof}[Proof of Theorem~\ref{thm:uniqueness}]
By Lemma~\ref{lem:orbital-measure-family}, it suffices to consider $\alpha=0$.
Let $\measPin{\sigma^2}{0}{t_0}$ denote the pinned measure \eqref{eq:59}. By \eqref{eq:61},
it suffices to show that  $\measPin{\sigma^2}{0}{t_0}$ is uniquely determined by \eqref{eq:64}.
Roughly, the idea of the proof is to characterise the law of $\phi'$ by calculating Laplace transforms
\begin{equation}
  \int  \exp\left\{\frac{1}{\sigma^2}\int_{\T\backslash\{t_0\}}  q\big(\phi(\tau)\big)\phi'^{\,2}(\tau) \d \tau\right\}
  \d \measPin{\sigma^2}{0}{t_0}(\phi)
\end{equation}
for sufficiently general $q\in C^{\infty}[0,1]$.
This will be done by an application of the change of variables formula with appropriately chosen $f_{q} \in \Diff^{3}[0,1]$.
\smallskip

\emph{Construction of $f_{q}$:}
We claim that for any $q \in C^\infty(\T)$ with $q(t) \leq 0$ for all $t\in [0,1]$, there exists $f_q \in \Diff^3[0,1]$ with $f_q'(0)=f_q'(1)$ and $f''(1) < 0<f''(0)$ such that
$\Schw(f_q, s\big) = q(s)$ for all $s\in [0,1]$. %

To construct an appropriate $f_{q}$, we make use of the relationship between the Schwarzian derivative and Hill's equation:
Let $g_i$ be two solutions to $g_i''(t) = -\tfrac12 q(t)g_i(t)$ with $g_1(0)=1,g_1'(0)=0$ and $g_2(0)=0,g_2'(0)=1$.
It follows from $q \leq 0$ that $g_1$ and $g_2$ are both positive on $(0,1]$.

For some $a,b,c,d \in \R$ to be chosen below we let
\begin{equation}
  f_q(t)
  = \frac{a g_2(t) + b g_1(t)}{c g_2(t) + d g_1(t)}
  = \frac{h_2(t)}{h_1(t)}
\end{equation}
with $h_2(t) = a g_2(t) + b g_1(t)$ and $h_1(t) = c g_2(t) + d g_1(t)$.
Since $g_{1}(0), g_{2}(1) > 0$ and $g_{2}(0) = 0$ we can choose $c,d \in \RR$ such that $h_1(0) = h_1(1) > 0$.
Since $q \leq 0$, it then follows that $h_1(t) > 0$ for all $t\in [0,1]$. %
Further we can choose $a, b$ such that $h_2(0) = 0, h_2(1) = h_1(1)$, namely $b=0$ and $a=h_1(1)/g_2(1)$.

Let $D = ad-bc$ and
note that the Wronskian of $(h_{1},h_{2})$ %
is constant in time: $h_1(t)h_2'(t) - h_1'(t)h_2(t) = D [g_1(t)g_2'(t) - g_1'(t)g_2(t)]  = D [g_1(0)g_2'(0) - g_1'(0)g_2(0)] = D$
(which follows from $h_i'' = -\frac12 q h_i$).
Consequently, $f_{q}' = (h_{1}h_{2}' - h_{1}'h_{2})/h_{1}^{2} = D/h_{1}^{2}$ and
\begin{equation}
  \Schw(f_q,t)
  = \log(f_{q}')'' - \tfrac12 \log(f_{q}')'^{\,2}
  = -2 \frac{h_1''(t)h_1(t)} {h_1(t)^2}
  =   q(t).
\end{equation}
Since $h_1(0)=h_1(1)$ we also have $f_q'(0) = f_q'(1)$.
Since we chose $a, b$ such that $h_2(0) = 0, h_2(1)=h_1(1)$. Then $f_q(0)=0$ and $f_q(1)=1$.
Since $f_q'=D/h_1^2$ has constant sign, also $f_q'(t)>0$ for all $t$, i.e., $f \in \Diff^1[0,1]$.
Since $h_1(t)>0$ for all $t$, we also have $h_1''(t) = -\tfrac12 q(t)h_1(t) > 0$ for all $t$.
Using $h_1(0) = h_1(1)$, this implies  $h_1'(0)<0$ and $h_1'(1)>0$.
Thus $f_q''(1)<0< f_q''(0)$.

\smallskip
\emph{Characterising the law:}
Therefore, by the change of variables formula \eqref{eq:64}, using that $\phi(t_0)=0$ and $f_q'(0)=f_q'(1)$,
\begin{equation}\label{eq:64-bis}
\begin{aligned}
  &\exp\left\{\frac{1}{\sigma^2}\int_{\T\backslash\{t_0\}} \left[ q\big(\phi(\tau)\big) \right]\phi'^{\,2}(\tau) \d \tau
  \right\}
    \d  \measPin{\sigma^2}{0}{t_0}(\phi)
    \\
  &= f_q'(0)
  \exp\left\{-\frac{1}{\sigma^2}\left[\frac{f_q''(0)}{f_q'(0)}-\frac{f_q''(1)}{f_q'(1)}\right]\phi'(t_0) \right\}
    \d f^{\ast} \measPin{\sigma^2}{0}{t_0}(\phi)
  \\
    &= f_q'(0)
      \exp\left\{-\frac{1}{\sigma^2}\left[f_q''(0)-f_q''(1)\right]\phi'(t_0) \right\}
  \d\measPin{\sigma^2}{0}{t_0}(\phi)
  .
\end{aligned}
\end{equation}
By \eqref{eq:108} with $G=1$ and $\alpha = \alpha(q) = i \tilde\alpha (q) \in i\R$
such that $8\sin^2(\alpha/2) = -8\sinh^2(\tilde \alpha/2) = [f''_q(1)-f''_q(0)]<0$,
the right-hand side can be written as
\begin{equation}
    f_q'(0) \frac{\sin\alpha(q)}{\alpha(q)}
    \int \exp\left\{\frac{2\alpha(q)^2}{\sigma^2} \int_{\T} \phi'^{\, 2}(\tau)\d\tau\right\} \d\measPin{\sigma^2}{0}{t_{0}}(\phi)
\end{equation}
By \eqref{eq:4}, this equals
\begin{equation}
  f_q'(0) \frac{\sin\alpha(q)}{\alpha(q)}
    \PartF{\sigma^2}{\alpha(q)}
    = f_q'(0) \exp\left\{2\alpha(q)^{2}/\sigma^{2}\right\} \PartF{\sigma^{2}}{0}.
\end{equation}
Taking the inverse $\psi = \phi^{-1}$, we obtain from \eqref{eq:64-bis}  that
\begin{equation}
  \begin{aligned}
  \exp\left\{\frac{1}{\sigma^2} \int_{(0, 1)} \frac{q(s)}{\psi'(s)}\d s \right\} \d\measPin{\sigma^2}{0}{t_{0}}(\phi)
    &= \exp\left\{\frac{1}{\sigma^2}  \int_{\T} q \Big(\phi(\tau)\Big)\Big(\phi'(\tau)\Big)^2  \right\} \d\measPin{\sigma^2}{0}{t_{0}}(\phi)
    \\
    &=
      f_q'(0) \exp\left\{2\alpha(q)^{2}/\sigma^{2}\right\} \PartF{\sigma^{2}}{0}.
  \end{aligned}
\end{equation}
Therefore %
we know the characteristic function of $\frac{1}{\psi'(s)}$, which determines the measure uniquely.
\end{proof}

\section{Schwarzian measure}
\label{sec:schwarzian}

As discussed in Remark~\ref{rmrk:U1},
the unquotiented $\alpha$-orbital measure %
is invariant under a subgroup $G_{\alpha} \subseteq \Diff^{1}(\T)$, and
the corresponding orbital measure should be understood as the quotient measure on the space $\Diff^{1}(\T)/G_{\alpha}$.
In this section, we consider the exceptional case of the Schwarzian Field Theory $\alpha = \pi$
where $G_\alpha = \SL(2,\R)$ is noncompact.
By extending the results of the previous section, we characterize the corresponding measure and compute the total mass of the quotient measure (the partition function).
The compact case $G_\alpha={\rm U}(1)$ would be straightforward.

\subsection{Definition of the Schwarzian measure}

Let  $\FMeasSL{\sigma^2}$ be an unquotiented $\pi$-orbital measure, see Definition~\ref{def:orbital-measure},
not assumed to be regular.
From the $\SL(2,\R)$-invariance of this measure, it follows that $\FMeasSL{\sigma^2}$ can be decomposed into a product of the Haar measure of $\SL(2,\R)$
with its quotient by $\SL(2,\R)$.
Since $\SL(2,  \R)$ is not compact, we need to choose the normalisation of the Haar measure $\Haar$,
and we  work with the normalisation \eqref{eqSLHaarMeasureFormula}.
The precise statement for the existence of a quotient measure is then as follows. 

\begin{prp} \label{propMeasureFactor}
  There exists a unique Borel measure $\FMeas{\sigma^2}$ on $\Diff^1(\T)/\SL(2,\R)$ such that for any continuous $F\colon \Diff^{1}(\T) \to [0,\infty]$,
  \begin{equation}
    \label{e:measurefactor}
    \int\limits_{\mathclap{\Diff^1(\T)}}\d\FMeasSL{\sigma^2}(\phi)\, F(\phi)
    \hspace*{1em}
    =
    \hspace*{2.5em}
    \int\limits_{\mathclap{\Diff^1(\T)/\SL(2,\R)}}\d\FMeas{\sigma^2}(\CnjCl{\phi})
    \hspace*{1em}
    \int\limits_{\mathclap{\SL(2,\R)}} \d\nu_H(\psi)\, F(\psi\circ\phi),
  \end{equation}
  where the right-hand side is well defined since the second integral only depends on $\phi \in \Diff^1(\T)$ through the conjugacy class $\CnjCl{\phi}$ in $\Diff^1(\T)/\SL(2,\R)$.
\end{prp}

\begin{proof}
  Since the space $\Diff^1(\T)$ is not locally compact, we could not locate a reference for the existence of the quotient measures (which would be standard
  in the locally compact situation). Therefore,
  in Appendix~\ref{sec:quotient-measure}, we establish a sufficiently general result (Proposition~\ref{prop:existence-quotient-measure}) about such quotient measures.
  Note that the assumptions of that result are satisfied in our context:
  $\FMeasSL{\sigma^2}$ is a Radon measure on $\Diff^1(\T)$ by assumption, %
  $\Diff^{1}(\T)$ is a complete separable metric space, and $\SL(2,\R)$ acts continuously and properly from the right (note the discussion after \eqref{eqSLHaarMeasureFormula}).
  Moreover, $\SL(2,\R)$ is unimodular and $\FMeasSL{\sigma^2}$ is invariant under its action by post-compositions.
\end{proof}

\begin{prp} \label{prp_schwarzian_uniqueness}
  There is a uniquely determined (up to normalisation)
  finite Borel measure $\FMeas{\sigma^2}$ on $\Diff^1(\T)/\SL(2,\R)$ such that the lift $\FMeasSL{\sigma^2}$ defined by \eqref{e:measurefactor}
  satisfies the change of variables formula \eqref{eqDiffeoMeasureChange}.
  Moreover, the lift $\FMeasSL{\sigma^2}$ is regular.
\end{prp}

Given $\FMeas{\sigma^2}$ and its lift $\FMeasSL{\sigma^2}$ as in the proposition,
$\measOrb{\sigma^2}{\pi} =\FMeasSL{\sigma^2}$ is a regular $\pi$-orbital measure %
and $\measOrb{\sigma^2}{\alpha}$ is the $\alpha$-orbital measure associated with $\measOrb{\sigma^2}{\pi}$  through the change of orbit relation \eqref{eq:57}.

By Section~\ref{sec:construction}, in fact $\measOrb{\sigma^2}{0}$ turns to be a Brownian Bridge times a Lebesgue measure on $\T$,
and it is natural to fix its total mass as $\PartF{\sigma^{2}}{0} = 1/\sqrt{2\pi\sigma^2}$.

\begin{defn} \label{defn_schwarzian}
  The Schwarzian measure is given by $\FMeas{\sigma^2}$, and
  we normalize this measure such
  that  $\measOrb{\sigma^2}{0}$ as defined above  has total mass $\PartF{\sigma^{2}}{0} = 1/\sqrt{2\pi\sigma^2}$.
\end{defn}

Recall that having an $\alpha$-orbital measure $\measOrb{\sigma^2}{\alpha}$ we define the corresponding pinned measure $\measPin{\sigma^2}{\alpha}{0}$ via \eqref{eq:59}.
The following proposition is useful to compute expectations with respect to the quotient measure.

\begin{prp} \label{prpIntegralRegularization}
Let $F\colon\Diff^1(\T)/\SL(2, \R) \to [0, \infty]$ be a continuous function. Then 
\begin{equation}
\int_{\Diff^1(\T)/\SL(2,\R)} F(\CnjCl{\phi}) \d\FMeas{\sigma^2}(\CnjCl{\phi}) 
=
 \lim_{\alpha \to \pi-} \frac{4\pi(\pi-\alpha)}{\sigma^2}\int_{\Diff^1(\T)} F(\CnjCl{\phi}) \d\measPin{\sigma^2}{\alpha}{0}(\phi).
\end{equation}
\end{prp}

Using this proposition together with Theorem~\ref{thm:partition-function} for the partition function of the unquotiented orbital measures with $\alpha<\pi$,
we compute the partition function of $\FMeas{\sigma^2}$ as follows.

\begin{prp} \label{prp_schwarzian_partition}
  The measure $\FMeas{\sigma^2}$  has total mass %
  \begin{equation}
    \FMeas{\sigma^2}\Big(\Diff^1(\T)/\SL(2, \R)\Big) 
    = \left(\frac{2\pi}{\sigma^2}\right)^{3/2} \exp\left(\frac{2\pi^2}{\sigma^2}\right).
  \end{equation}
\end{prp}

\begin{proof} %
We apply Proposition~\ref{prpIntegralRegularization} and then Theorem~\ref{thm:partition-function}: %
  \begin{equation}
    \begin{aligned}
      \FMeas{\sigma^2}\Big(\Diff^1(\T)/\SL(2, \R)\Big) 
      &=
        \lim_{\alpha \to \pi-} \frac{4\pi(\pi-\alpha)}{\sigma^2}     \measOrb{\sigma^2}{\alpha}\Big(\Diff^1(\T)\Big).\\
      &= \lim_{\alpha \to \pi-} \frac{4\pi(\pi-\alpha)}{\sigma^2} \frac{\alpha}{\sin{\alpha}} \frac{e^{2\alpha^{2}/\sigma^{2}}}{\sqrt{2\pi \sigma^{2}}}\\
      &= \left(\frac{2\pi}{\sigma^2}\right)^{3/2} \exp\left(\frac{2\pi^2}{\sigma^2}\right).
    \end{aligned}
  \end{equation}
  This completes the proof.
\end{proof}

\subsection{Proof of Propositions~\ref{prp_schwarzian_uniqueness} and \ref{prpIntegralRegularization}}
\label{sec:prpIntegralRegularization}

The proposition follows from the following lemmas.
Recall the parametrisation \eqref{eqSLTransformFormula}
of $\SL(2, \R)$
and that in this parametrisation, the Haar measure takes the form \eqref{eqSLHaarMeasureFormula}.

\begin{lmm}\label{LemmaCalculationsDVSquareIntegral}
  For $\phi_{z,\ang}$ as in \eqref{eqSLTransformFormula}, with $\rho=|z|<1$,
  \begin{equation}\label{eqLemmaCalculationsDVSquareIntegral}
    \int_{\T}\phi_{z,\ang}'^{\, 2}(s)  \d s = \frac{1+\rho^2}{1-\rho^2}.
\end{equation}
Moreover, as $\rho\nearrow 1$ the functions $\frac{1-\rho^2}{1+\rho^2}\phi_{z,0}'^{\, 2}$ are an approximate identity on $\T$, i.e.\
for any $f \in C(\T)$,
\begin{equation} \label{eqLemmaCalculationsDVSquareIntegral-f}
  \lim_{\rho\nearrow 1} \left[ \frac{1-\rho^2}{1+\rho^2}\int_{\T} \phi_{z,0}'^{\, 2}(s) f(s) \d s \right] = f(\theta), \qquad z = \rho\, e^{i 2\pi \theta},
\end{equation}
uniformly in $\theta \in\T$.
\end{lmm}

\begin{proof}
  Since
  \begin{equation}
    e^{i2\pi \phi_{z,\ang}(t)} = e^{i2\pi \ang}\frac{e^{i2\pi t}-z}{1-e^{i2\pi t}\bar{z}},
  \end{equation}
  it suffices to consider M\"{o}bius transformations of $\D_1$, given by 
  \begin{equation}
    w \to G_z(w) = \frac{w-z}{1-w\bar{z}}.
  \end{equation}
  In order to prove \eqref{eqLemmaCalculationsDVSquareIntegral} we then need to show that
  \begin{equation}\label{eqLemmaCalculationsDVSquareIntegral-pf}
    \frac{1}{(2\pi)^2}\int_{\T} \left|\frac{\d}{\d t}\left(\frac{e^{i2\pi t}-z}{1 - e^{i2\pi t}\,\bar{z}}\right)\right|^2\d t = \frac{1+|z|^2}{1-|z|^2}.
  \end{equation}
  To see this, expand 
  \begin{equation}
    \frac{e^{i2\pi t}-z}{1 - e^{i2\pi t}\,\bar{z}} 
    = 
    -z+\sum_{n=1}^{\infty}e^{i 2\pi n t} \bar{z}^{n-1}(1-z\,\bar{z}).
  \end{equation}
  Therefore,
  \begin{equation}
    \frac{1}{(2\pi)^2}\int_{\T} \left|\frac{\d}{\d t}\left(\frac{e^{i2\pi t}-z}{1 - e^{i2\pi t}\,\bar{z}}\right)\right|^2\d t  
    =
     \sum_{n=1}^{\infty}n^2 |z|^{2n-2}(1-|z|^2)^2.
  \end{equation}
  The right-hand side equals
    \begin{equation} \label{eqLemmaCalculationsDVSquareIntegral-pf2}
      \sum_{n=0}^\infty (n+1)^2 |z|^{2n} - 2\sum_{n=1}^\infty n^2 |z|^{2n} + \sum_{n=2}^\infty (n-1)^2 |z|^{2n}
      = 1 + 4|z|^2 - 2 |z|^2 + \sum_{n=2}^\infty 2|z|^{2n} 
        = \frac{1+|z|^2}{1-|z|^2},
    \end{equation}
    which gives \eqref{eqLemmaCalculationsDVSquareIntegral-pf}. The claim \eqref{eqLemmaCalculationsDVSquareIntegral-f} follows similarly.
    Since $C^{\infty}(\T)$ is dense in $C(\T)$, it suffices to assume that $f \in C^{\infty}(\T)$. Then
    \begin{equation}
      f(t) = \sum_{k \in \Z} e^{i2\pi k t} \hat f_k,
    \end{equation}
    with $(\hat f_k)_{k\in\Z} \in \ell^1$. Therefore 
    \begin{align}
      \frac{1}{(2\pi)^2}\int_{\T} \left|\frac{\d}{\d t}\left(\frac{e^{i2\pi t}-z}{1 - e^{i2\pi t}\,\bar{z}}\right)\right|^2 f(t)\, \d t
      &= \sum_{k} \hat f_k \sum_{n=1}^\infty \mathbf{1}_{n\geq -k}\, n(n+k) z^{k} |z|^{2n-2} (1-|z|^2)^2
        \nonumber\\
      &= \sum_{k} \hat f_k e^{i2\pi k \theta} q_k(\rho)
    \end{align}
    with
    \begin{align}
      q_k(\rho) = \sum_{n=1}^\infty \mathbf{1}_{n\geq -k}\, n(n+k) \rho^{2n-2+k} (1-\rho^2)^2.
    \end{align}
    Analogous to \eqref{eqLemmaCalculationsDVSquareIntegral-pf2},
    \begin{multline}
      q_k(\rho) = \sum_{n=0}^\infty \mathbf{1}_{n \geq -k-1}\, (n+1)(n+1+k) \rho^{2n+\rho} - 2 \sum_{n=1}^\infty \mathbf{1}_{n \geq -k}\, n(n+k) \rho^{2n+k}
       \\
      + \sum_{n=2}^\infty \mathbf{1}_{n \ge -k+1}\, (n-1)(n-1+k) \rho^{2n+k}
      =
      \sum_{n=2}^\infty \mathbf{1}_{n \geq -k}\, 2\rho^{2n+k}  + O(1).
    \end{multline}
    Therefore as $\rho \nearrow 1$,
    \begin{equation}
      (1-\rho^2)q_k(\rho) = O(1), \qquad \lim_{\rho \nearrow 1}(1-\rho^2)q_k(\rho) = 2.
    \end{equation}
    By dominated convergence, \eqref{eqLemmaCalculationsDVSquareIntegral-f} follows.
\end{proof}

\begin{lmm}\label{lemmaCalculationsDV}
For $\phi \in \Diff^1(\T)$, define
\begin{equation} \label{e:Ddef}
\dense{\alpha} (\phi) = 
\frac{4\pi(\pi-\alpha)}{\sigma^2}
\int_{\SL(2, \R)}
\exp\left\{-\frac{2(\pi^2-\alpha^2)}{\sigma^2} 
\int_{\T}(\psi\circ \phi)'^{\, 2}(t)\d t\right\} 
\d\Haar(\psi) .
\end{equation}
Then $D^\alpha$ is $\SL(2,\R)$ invariant and the following holds for $\alpha <\pi$:
\begin{enumerate}
\item \label{lemmaSLAvIneq}
For any $\phi \in \Diff^1(\T)$,
\begin{equation}\label{eq:37}
\dense{\alpha}(\phi)\leq  
\frac{2\pi}{\pi+\alpha}.
\end{equation}
\item For any $\phi \in \Diff^1(\T)$,
\begin{equation}\label{eq:38}
\lim_{\alpha\to\pi-}
\dense{\alpha}(\phi)
= 1.
\end{equation} 
\end{enumerate}
\end{lmm}
\begin{proof}
First, we prove \eqref{eq:37}. 
By \eqref{eqSLHaarMeasureFormula},
\begin{equation}
  \int_{\SL(2,\R)} F(\psi) \, \d \nu_H(\psi) =\int_{\substack{0\leq \rho <1 \\0 \leq \theta<1\\ 0\leq a <1}} F(\phi_{z,\ang}) \, \frac{4 \rho \d\rho \, \d\theta \, \d a }{(1-\rho^2)^2}.
\end{equation}
Recall the parameterisation \eqref{eqSLTransformFormula}.
Clearly one has $(\phi_{z,\ang}\circ \phi)' = (\phi_{z,0}\circ \phi)' = (\phi_{z,0}' \circ \phi) \,\phi'$.
Upon the reparametrisation $s = \phi(t)$, we get
\begin{equation}
\int_{\T}(\phi_{z,\ang}\circ \phi)'^{\, 2}(t)\d t 
= 
\int_{\T} \frac{\phi_{z,0}'^{\, 2}(s)}{(\phi^{-1})'(s)} \d s.
\end{equation}
We insert this into \eqref{e:Ddef} and apply Jensen's inequality to obtain
\begin{equation}
\begin{aligned}
\dense{\alpha}(\phi)
  &
    =
  \frac{4\pi(\pi-\alpha)}{\sigma^2}
  \int_{\substack{0\leq\rho<1\\
      0\leq \theta<1}} 
  \exp\left\{-\frac{2(\pi^2-\alpha^2)}{\sigma^2} \int_{\T} \frac{\phi_{z,0}'^{\, 2}(s)}{(\phi^{-1})'(s)} ds\right\}\frac{4 \rho \d\rho \, \d\theta \,}{(1-\rho^2)^2},\\
&\leq 
\frac{4\pi(\pi-\alpha)}{\sigma^2}
\int_{\substack{0\leq\rho<1\\
0\leq \theta<1}}
\int_{\T}\frac{1-\rho^2}{1+\rho^2}\, \phi_{z,0}'^{\, 2}(s)\,
\exp\left\{-\frac{2(\pi^2-\alpha^2)}{\sigma^2} \cdot\frac{1+\rho^2}{1-\rho^2} \cdot \frac{1}{(\phi^{-1})'(s)}\right\} 
  \d s \,\frac{4 \rho \d\rho \, \d\theta }{(1-\rho^2)^2}.
\end{aligned}
\end{equation}
Then, by Tonelli's theorem,
\begin{equation}
\begin{aligned}
  \dense{\alpha}(\phi)
  &\leq
\frac{4\pi(\pi-\alpha)}{\sigma^2}
\int_{0\leq \rho<1}\int_{\T}
\exp\left\{-\frac{2(\pi^2-\alpha^2)}{\sigma^2} \cdot\frac{1+\rho^2}{1-\rho^2} \cdot \frac{1}{(\phi^{-1})'(s)}\right\} 
\d s \, \frac{4 \rho \d\rho}{(1-\rho^2)^2} \\
&= 
\frac{4\pi(\pi-\alpha)}{\sigma^2}
\int_{\T}  \exp\left\{-\frac{2(\pi^2-\alpha^2)}{\sigma^2 (\phi^{-1})'(s)}\right\}\frac{\sigma^2 (\phi^{-1})'(s)}{2(\pi^2-\alpha^2)}\d s \\
&\leq 
\frac{4\pi(\pi-\alpha)}{\sigma^2}
\int_{\T} \frac{\sigma^2 (\phi^{-1})'(s)}{2(\pi^2-\alpha^2)}\d s \\
&= \frac{2\pi}{\pi+\alpha}.
\end{aligned}
\end{equation}
To find the limit of $\dense{\alpha}(\phi)$, recall that
$\left\{\frac{1-\rho^2}{1+\rho^2}\phi_{z,0}'^{\, 2}\right\}_{\rho\nearrow 1}$ is an approximate identity.
More precisely, 
\begin{equation}
\int_{\T} \frac{\phi_{z,0}'^{\, 2}(s)}{(\phi^{-1})'(s)}\d s 
= \frac{1+\rho^2}{1-\rho^2}\cdot \left(\frac{1}{(\phi^{-1})'(\theta)}+o(1)\right)
\qquad\text{ as }\rho\nearrow 1, 
\end{equation}
uniformly in $\theta \in \T$, by Lemma~\ref{LemmaCalculationsDVSquareIntegral}. Therefore,
as $\alpha \nearrow \pi$,
\begin{equation}
\begin{aligned}
&\int_{0\leq\rho<1}  \exp\left\{-\frac{2(\pi^2-\alpha^2)}{\sigma^2} \int_{\T} \frac{\phi_{z,0}'^{\, 2}(s)}{(\phi^{-1})'(s)} \d s\right\}\frac{4 \rho \d\rho}{(1-\rho^2)^2} \\
&\qquad = \exp\left\{-\frac{2(\pi^2-\alpha^2)}{\sigma^2 \big[(\phi^{-1})'(\theta)+o(1)\big]}\right\}
\frac{\sigma^2\big[(\phi^{-1})'(\theta)+o(1)\big]}{2(\pi^2-\alpha^2)}+O(1)\\
&\qquad= 
\frac{\sigma^2(\phi^{-1})'(\theta)}{2(\pi^2-\alpha^2)}\big(1+o(1)\big).
\end{aligned}
\end{equation}
Integration in $\theta$ finishes the proof.
\end{proof}

\begin{proof}[Proof of Proposition~\ref{prp_schwarzian_uniqueness}]
  By Theorem~\ref{thm:uniqueness} for uniqueness of regular unquotiented orbital measures,
  it suffices to prove that the lift defined by \eqref{e:measurefactor} is regular.
  This follow from the assumption that the quotient measure is finite and Lemma~\ref{lemmaCalculationsDV}.
  Indeed, the definition of regular (see Definition~\ref{def:regular}) is
  that there is $\alpha^2\in \R$ such that
  \begin{equation}
    \int \exp\left\{\frac{2(\alpha^2-\pi^2)}{\sigma^2}\int_{\T}\phi'^{\, 2}(\tau)\d\tau\right\} \d\FMeasSL{\sigma^2}(\phi) <\infty.
  \end{equation}
  By Proposition~\ref{propMeasureFactor} and  Lemma~\ref{lemmaCalculationsDV}, for any $\alpha<\pi$, this equals
  \begin{equation}
    \frac{\sigma^2}{4\pi (\pi-\alpha)} \int D^\alpha(\phi) \d\FMeas{\sigma^2}(\CnjCl{\phi})
    \leq
    \frac{\sigma^2}{2(\pi^2-\alpha^2)} \int \d\FMeas{\sigma^2}(\CnjCl{\phi})
  \end{equation}
  which is finite by since $\FMeas{\sigma^2}$ is assumed to be finite, and we used that $D^\alpha(\phi)$ only depends on $[\phi]$.
\end{proof}

\begin{proof}[Proof of Proposition \ref{prpIntegralRegularization}]
It follows from the definition of $\FMeasSL{\sigma^2}$ that 
\begin{equation}
\exp\left\{-\frac{2(\pi^2-\alpha^2)}{\sigma^2}\int_{\T} \phi'^{\, 2}(t)\d t \right\} 
\d\FMeasSL{\sigma^2}(\phi)
= 
\d \measOrb{\sigma^2}{\alpha}(\phi_0) \otimes \d \Theta,
\end{equation}
where
\begin{equation}
  \phi(\cdot) = \phi_0(\cdot)+\Theta, \text{ for } \Theta \in [0,1).
\end{equation}
Therefore,
\begin{equation}
\int_{\Diff^1(\T)} F(\CnjCl{\phi}) \d\measOrb{\sigma^2}{\alpha}(\phi) 
=
\int_{\Diff^1(\T)} F(\CnjCl{\phi}) 
\exp\left\{-\frac{2(\pi^2-\alpha^2)}{\sigma^2}\int_{\T} \phi'^{\, 2}(t)\d t \right\}
\d\FMeasSL{\sigma^2}(\phi).
\end{equation}
Using the definition of $D^\alpha$ given in \eqref{e:Ddef} and the factorisation of $\FMeasSL{\sigma^2}$ as stated in \eqref{e:measurefactor},
\begin{multline}\label{eqQuotientAlphaReg}
\frac{4\pi(\pi-\alpha)}{\sigma^2}
\int_{\Diff^1(\T)} F(\CnjCl{\phi}) 
\exp\left\{-\frac{2(\pi^2-\alpha^2)}{\sigma^2}\int_{\T} \phi'^{\, 2}(t)\d t \right\}
\d\FMeasSL{\sigma^2}(\phi)\\
=
\int_{\Diff^1{(\T)}/\SL(2, \R)} \dense{\alpha}(\phi) F(\phi) \d \FMeas{\sigma^2}(\phi).
\end{multline}

If $\int F\big(\CnjCl{\phi}\big) \d\FMeas{\sigma^2}\big(\CnjCl{\phi}\big)$ is finite, then using Lemma~\ref{lemmaCalculationsDV} and
the Dominated Convergence Theorem we obtain the desired result.
If, on the other hand, $\int F\big(\CnjCl{\phi}\big) \d\FMeas{\sigma^2}\big(\CnjCl{\phi}\big)$ is infinite, then we get the desired by Fatou's Lemma.
\end{proof}

\color{black}

\section{Construction of unquotiented orbital measures}
\label{sec:construction}

In this section we prove Theorem~\ref{thm:orbital-exist-bis}, i.e.\ the explicit realisation
of the unquotiented Virasoro $0$-orbital measure
(see Definition~\ref{def:orbital-measure}), as a version of what is known as the Malliavin--Shavgulidze measure.
For convenience we repeat the statement of Theorem~\ref{thm:orbital-exist-bis}.

\begin{thr} \label{thm:orbital-exist}
  Regular unquotiented Virasoro $\alpha$-orbital measures exist.
  In fact, the $0$-orbital measure can be realised as a function of a Brownian Bridge and a Lebesgue measure on $\T$ for the zero mode:
  \begin{equation}%
    \d\mu_{\sigma^2}(\phi) \coloneqq 
    \d \WS{\sigma^2}{0}{1}(\xi)
    \otimes \d\Theta, \qquad \text{with } \phi(t) = \Theta + \A_{\xi}(t)\enspace (\mathrm{mod}\, 1), \text{ for } \Theta\in [0, 1),
  \end{equation}
  where $\d\Theta$ is the Lebesgue measure on $[0,1)$ and with 
  \begin{equation} \label{defP-bis}
    \A(\xi)(t) \coloneqq \A_{\xi}(t) 
    \coloneqq \frac{\int_{0}^t e^{\xi(\tau)}\d\tau}{\int_{0}^1 e^{\xi(\tau)}\d\tau}.
  \end{equation}
\end{thr}

It is easy to see that the pinned $0$-orbital measure $\measPin{\sigma^{2}}{0}{t_{0}}$ (see Section~\ref{sec:pinned}) is obtained analogously by fixing $\Theta = -\A_{\xi}(t_{0})$.
In particular, $\d\measPin{\sigma^{2}}{0}{0}(\A_{\xi}) = \d\WS{\sigma^{2}}{0}{1}(\xi)$.
Importantly it satisfies the pinned version of the change of variable formula \eqref{eq:64} as in Proposition~\ref{prpWSMeasureChangePinned}.

To prove the theorem, we begin with the definition
of the unnormalised Brownian Bridge measure $\WS{\sigma^2}{a}{T}$ in Section~\ref{sec:BB}.
In Section~\ref{sec:MSmeas}, we then
show that $\mu_{\sigma^2}$ satisfies the change of variables formula for $0$-orbital measure
(see Proposition~\ref{lemDiffeoMeasureChange}).
Since it is obviously finite (thus regular), we can thus identify $\mu_{\sigma^2}$ with the unique regular $0$-orbital measure $\measOrb{\sigma^2}{0}$
according Theorem~\ref{thm:uniqueness}, up to normalisation.

To fix the normalisation, in Section~\ref{sec:BB}, we explain that the natural normalisation of the partition function of a Brownian Bridge $\WS{\sigma^2}{0}{1}$
with variance $\sigma^2$ is
\begin{equation}
  \WS{\sigma^2}{0}{1}(\Cfree[0,1]) = \frac{1}{\sqrt{2\pi \sigma^2}} %
  .
\end{equation}
Since the Lebesgue measure of $\T$ is $1$ and the parametrization \eqref{defP-bis} does not change the total mass,
we also normalise the $0$-orbital measure in the same way:
\begin{equation}
  \measOrb{\sigma^2}{0}(\Diff^1(\T)) = \frac{1}{\sqrt{2\pi \sigma^2}}.
\end{equation}

\subsection{Unnormalised Brownian Bridge measure}
\label{sec:BB}

The unnormalised version of the Brownian Bridge measure is defined in Definition~\ref{defn:BB} below.
It should be a finite measure on $\Cfree[0,T] = \left\{f\in C[0,T]\, | \, f(0)=0\right\}$ formally represented as
\begin{equation} \label{e:BB-formaldensity}
\d\WS{\sigma^2}{a}{T}(\xi) = \exp\left\{-\frac{1}{2\sigma^2}\int_{0}^{T}\xi'^{\, 2} (t)\d t\right\} \delta(\xi(0)) \delta(\xi(T)-a)\prod_{\tau \in (0,T)}\d\xi(\tau).
\end{equation}
For any $T_1, T_2>0$, $a\in \R$ and 
any positive continuous functional $F$ on $C[0, T_1+T_2]$, we then expect
\begin{equation}\label{eqConvWS}
  \int F(\xi)\d \WS{\sigma^2}{a}{T_1+T_2}(\xi) = 
  \int_{\R}\int \int 
  F(\xi_1 \sqcup \xi_2)
  \d \WS{\sigma^2}{b}{T_1}(\xi_1)\d \WS{\sigma^2}{a-b}{T_2}(\xi_2)\d b,
\end{equation}
where for $f\in \Cfree [0, T_1]$ and $g\in \Cfree [0, T_2]$, we denote by $f\sqcup g \in \Cfree[0,T_1+T_2]$ %
the function
\begin{equation}
(f\sqcup g)(t) =   
\begin{dcases} 
  f(t) & \text{if }  t\in[0, T_1], \\
   f(T_1)+g(t-T_1)   & \text{if } t\in(T_1, T_1+T_2].
  \end{dcases}
\end{equation}
The precise definition achieving these properties is as follows.

\begin{defn} \label{defn:BB}
  The unnormalised Brownian Bridge measure with variance $\sigma^2 > 0$ is a finite Borel measure $\d\WS{\sigma^2}{a}{T}$ on $\Cfree[0, T]$ such that
  \begin{equation}\label{eq:25}
    \sqrt{2\pi T\sigma^2}\, \exp\left\{\frac{a^2}{2 T\sigma^2}\right\}\d\WS{\sigma^2}{a}{T}(\xi)
  \end{equation}
  is the distribution of a Brownian Bridge $(\xi(t))_{t\in[0,T]}$ with variance $\sigma^2$ and $\xi(0) = 0$, $\xi(T) = a$.
\end{defn}

\color{black}

\noindent
The above normalisation of the normalised Brownian bridge is exactly the one that is needed to ensure that the composition property \eqref{eqConvWS} holds.
We note that, up to a constant, it coincides with the $\zeta$-regularised determinant of the Dirichlet Laplacian on $[0,T]$, namely $\det_{\zeta}(-\frac{1}{\sigma^{2}}\Delta^{[0,T]}) \propto T\sigma^{2}$,
see \eqref{e:det-rho} below.

\begin{prp} \label{prop:BBcomposition}
  The unnormalised Brownian Bridge measures $\d\WS{\sigma^2}{a}{T}$ satisfy the property \eqref{eqConvWS}.
\end{prp}

\begin{proof}
  Notice that $\d\WS{\sigma^2}{a}{T}(\xi) \otimes \d a$ is a probability measure and that the distribution of $\xi$ under this measure that of a Brownian motion restricted to $[0,T]$, see [RevuzYor, Exercise (3.16)].
  Therefore, the Markov property for Brownian motion implies that the distribution of $\xi_1\sqcup \xi_2$ under
  $\d\WS{\sigma^2}{b}{T_1}(\xi_1) \otimes \d b \otimes \d\WS{\sigma^2}{a-b}{T_2}(\xi_2) \otimes \d a$ is a Brownian motion restricted to $[0,T_1+T_2]$.
  On the other hand, $\xi$ under $\d\WS{\sigma^2}{a}{T_1+T_1}(\xi) \otimes \d a$ is also a Brownian motion on $[0,T_1+T_2]$.
  Since $(\xi_1\sqcup\xi_2)(T_1+T_2)=a$ under the first measure and $\xi(T_1+T_2)=a$ under the second measure, the distributions
  of $\xi_1\sqcup\xi_2$ and $\xi$ under $\d\WS{\sigma^2}{b}{T_1}(\xi_1) \otimes \d b \otimes \d\WS{\sigma^2}{a-b}{T_2}(\xi_2)$
  respectively $\d\WS{\sigma^2}{a}{T_1+T_1}(\xi)$ must be the same, i.e.\ \eqref{eqConvWS} holds.
\end{proof}

\subsection{Unnormalised Malliavin--Shavgulidze measure}
\label{sec:MSmeas}

We now define the $0$-orbital measure as a finite measure $\mu_{\sigma^2}= \measOrb{\sigma^2}{0}$ on $\Diff^1(\T)$.
This measure is very similar to what is known as the Malliavin--Shavgulidze measure,
see \cite[Section~11.5]{BogachevMalliavin}. 
It should formally correspond to
\begin{equation} \label{e:MeasureMuFormal}
\d\mu_{\sigma^2}(\phi) 
= \exp\left\{-\frac{1}{2\sigma^2}\int_{\T}\left(\frac{\phi''(\tau)}{\phi'(\tau)}\right)^2\d\tau\right\} \prod_{\tau \in \T}\frac{\d\phi(\tau)}{\phi'(\tau)}.
\end{equation}
To motivate the actual definition,
recall the formal density \eqref{e:BB-formaldensity} of the unnormalised Brownian Bridge measure $\WS{\sigma^2}{0}{1}$.
Thus under the measure $\mu_{\sigma^2}$ the process $(\log\phi'(\tau) - \log\phi'(0))_{\tau \in [0,1)}$
formally has the same density as $(\xi_{t})_{t\in [0,1)}$ under $\WS{\sigma^2}{0}{1}$,
and we define $\mu_{\sigma^2}$ by
\begin{equation}\label{defMeasureMu}
\d\mu_{\sigma^2}(\phi) \coloneqq 
\d \WS{\sigma^2}{0}{1}(\xi)
\otimes \d\Theta, \qquad \text{with } \phi(t) = \Theta + \A_{\xi}(t)\enspace (\mathrm{mod}\, 1), \text{ for } \Theta\in [0, 1),
\end{equation}
where $\d\Theta$ is the Lebesgue measure on $[0,1)$ and with the change of variables
\begin{equation} \label{defP}
\A(\xi)(t) \coloneqq \A_{\xi}(t) 
\coloneqq \frac{\int_{0}^t e^{\xi(\tau)}\d\tau}{\int_{0}^1 e^{\xi(\tau)}\d\tau},
\end{equation}
The variable $\Theta$ corresponds to the value of $\phi(0)$.
Note that the map $\xi \mapsto \A(\xi)$ is a bijection between $\Cfree[0, 1]$
and $\Diff^1 [0,1]$ with inverse map
\begin{equation}
\begin{aligned} 
\A^{-1}: \Diff^1[0,1] &\to \Cfree[0, 1]\\
\varphi &\mapsto \log \varphi'(\cdot) - \log \varphi'(0).
\end{aligned}
\end{equation}
With $\mu_{\sigma^2}$ defined as above, the following change of variables formula holds:

\begin{prp} \label{lemDiffeoMeasureChange}
Let $f \in \Diff^3[0, 1]$ be such that $f'(0) = f'(1)$ and $f''(0) = f''(1)$,
and denote by $f^*\mu_{\sigma^2} = f^{-1}_*\mu_{\sigma^2}$ the push-forward of $\mu_{\sigma^2}$ under post-composition with $f^{-1}$,
i.e.
\begin{equation}
f^*\mu_{\sigma^2}(A) = \mu_{\sigma^2}(f\circ A),
\end{equation} 
where $f\circ A \coloneqq \left\{f \circ \phi\, \big| \, \phi \in A\right\}$.
Then
\begin{equation}\label{eqLemmaDiffeoMeasureChange}
\frac{\d f^*\mu_{\sigma^2}(\phi)}{\d\mu_{\sigma^2}(\phi)} = 
\exp\left\{
\frac{1}{\sigma^2}\int_{\T} \Schw_f\big(\phi(t)\big)\,\phi'^{\, 2}(t) \d t \right\}.
\end{equation}
\end{prp}

The proposition is a consequence of the following change of variables formula for the unnormalised Brownian Bridge.
For $f\in \Diff^3[0,1]$ denote by $L_f$ the post-composition operator on $\Diff^1[0,1]$:
\begin{equation}\label{eq:39}
L_f(\phi) = f \circ \phi.
\end{equation}

\begin{prp}%
\label{prpBBMMeasureChange}
Let $f\in \Diff^3[0,1]$ and set $b = \log f'(1) - \log f'(0)$. 
Let $f^\sharp \WS{\sigma^2}{a}{1} = f_{\sharp}^{-1} \WS{\sigma^2}{a}{1}$ be the push-forward of $\WS{\sigma^2}{a}{1}$ under $\A^{-1} \circ L_{f^{-1}} \circ \A=\left(\A^{-1} \circ L_{f} \circ \A\right)^{-1}$.
Then for any $a\in \R$, $f^{\sharp} \WS{\sigma^2}{a}{1}$ is absolutely continuous with respect to $\WS{\sigma^2}{a-b}{1}$ and
\begin{equation}\label{eqBBMMeasureChange}
\frac{\d f^{\sharp} \WS{\sigma^2}{a}{1}(\xi)}{\d  \WS{\sigma^2}{a-b}{1}(\xi)} =  
\frac{1}{\sqrt{f'(0)f'(1)}}
\exp\left\{ 
\frac{1}{\sigma^2}\left[\frac{f''(0)}{f'(0)}\A_{\xi}'(0)-\frac{f''(1)}{f'(1)}\A_{\xi}'(1)\right]
+
\frac{1}{\sigma^2}\int_0^1 \Schw_f \big(\A_{\xi}(t)\big)\, \A_{\xi}'(t)^{2}\d t
\right\}.
\end{equation}
\end{prp}

We prove this statement in Section~\ref{sect_Malliavin_Calculations}.
A similar quasi-invariance statement (for the Wiener measure instead of the unnormalised Brownian Bridge)
can be found in \cite[Theorem~11.5.1]{BogachevMalliavin}, for example, but we were not able to locate a rigorous proof of formula \eqref{eqBBMMeasureChange} in the existing literature.

For now, we show how it implies Proposition~\ref{lemDiffeoMeasureChange}:

\begin{proof}[Proof of Proposition~\ref{lemDiffeoMeasureChange}]
Let 
\begin{align}
\phi(t) &=   \Theta + \A_{\xi}(t)  \enspace (\mathrm{mod}\, 1),\\
\big(f \circ \phi \big) (t) &=  \widetilde{\Theta} + \A_{\widetilde{\xi}}(t) \enspace (\mathrm{mod}\, 1) .
\end{align}
In other words,
\begin{equation}
\widetilde{\Theta} = f(\Theta),\qquad
\widetilde{\xi} = \Big(\A^{-1}\circ L_{f_{\Theta}} \circ \A \Big) (\xi), 
\end{equation}
where $f_{\Theta}(\tau) = f(\tau+\Theta) - \widetilde{\Theta}$.
From \eqref{defMeasureMu} we see that,
\begin{equation}
\d f^*\mu_{\sigma^2}(\phi) = \d f_{\Theta}^{\sharp}\WS{\sigma^2}{0}{1}(\xi) \times \d f(\Theta),
\end{equation}
with notation as in Proposition~\ref{prpBBMMeasureChange}.
Hence, Proposition~\ref{prpBBMMeasureChange} implies
\begin{equation}
\frac{\d f_{\Theta}^{\sharp}\WS{\sigma^2}{0}{1}(\xi) }{\d\WS{\sigma^2}{0}{1}(\xi) }
= \frac{1}{f'(\Theta)} \exp\left\{\frac{1}{\sigma^2}\int_{\T} \Schw_f\big(\phi(t)\big)\,\phi'^{\, 2}(t) \d t\right\},
\end{equation}
and using
\begin{equation}
\frac{\d f(\Theta)}{\d \Theta} = f'(\Theta)
\end{equation}
the proof is finished.
\end{proof}

\color{black}

\subsection{Change of variables: Proof of Proposition~\ref{prpBBMMeasureChange}}

\label{sect_Malliavin_Calculations}

Let $\Wiener_{\sigma^2}$ denote the Wiener measure with variance $\sigma^2$ on $\Cfree[0,1] = \left\{f\in C[0,1]\, | \, f(0)=0\right\}$,
and recall the definition of the unnormalised Brownian Bridge $\WS{\sigma^2}{a}{1}$ from $0$ to $a$ in time $1$ from Definition~\ref{defn:BB}.
Then by [RevuzYor, Exercise (3.16)], for any $ X\subset \Cfree[0,1]$,
\begin{equation}\label{eqWienerSlicingWS}
\Wiener_{\sigma^2}(X) 
= \int_{\R}\WS{\sigma^2}{a}{1}(X)\d a.
\end{equation}
Moreover, by Proposition~\ref{prop:disintegration} in Appendix~\ref{sec:disintegration},
  this disintegration identity characterises $\WS{\sigma^2}{a}{1}$ uniquely, assuming that
$\WS{\sigma^2}{a}{1}$ is supported on $\{f \in C[0,1]\,|\, f(0)=0, \, f(1)=a\}$ and that
$a\mapsto \WS{\sigma^2}{a}{1}$ is continuous in the sense of Definition~\ref{defMeasureCont} 
(with the open cover $\{U_a\}_{a\in A}$ being the whole space: $U_a = \Cfree[0,1]$).

\begin{lmm}\label{lmmExpMomentAprioriBound}
Let $\TechMeas$ be either $\WS{\sigma^2}{a}{T}$ for some $a\in\R$ and $T>0$, or $\Wiener_{\sigma^2}$ (in this case we put $T=1$).
Then there exists $\eps>0$ such that
\begin{equation}
\int \exp\left(\frac{\eps}{\int_0^T e^{\xi(\tau)}\d\tau} \right)\d\TechMeas(\xi)<\infty.
\end{equation}
\end{lmm}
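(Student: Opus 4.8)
\emph{Plan.} Write $Y = Y(\xi) = \int_0^T e^{\xi(\tau)}\,\d\tau$. Since $\TechMeas$ is, up to the positive normalisation constant appearing in Definition~\ref{defn:BB}, a probability measure, I may and will assume $\TechMeas$ is itself a probability measure. The first step is to reduce the claim to a \emph{small-ball estimate} for $Y$: by a layer-cake computation, for any $\delta_0>0$,
\[
  \int e^{\eps/Y}\,\d\TechMeas
  = \int_0^\infty \TechMeas\big(e^{\eps/Y}>s\big)\,\d s
  \le e^{\eps/\delta_0} + \int_{e^{\eps/\delta_0}}^\infty \TechMeas\!\left(Y < \tfrac{\eps}{\log s}\right)\d s ,
\]
using that $\{e^{\eps/Y}>s\}=\{Y<\eps/\log s\}$ and that $\eps/\log s<\delta_0$ once $s>e^{\eps/\delta_0}$. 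Hence it suffices to prove a bound of the form $\TechMeas(Y<\delta)\le C e^{-c/\delta}$ for all $\delta\le\delta_0$, with constants $c,C,\delta_0>0$: then $\TechMeas(Y<\eps/\log s)\le C s^{-c/\eps}$, which is integrable at infinity for any $\eps<c$, finishing the proof.

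The small-ball estimate will come from a first-passage argument. If $Y(\xi)<\delta$ and $\delta\le T/e$, then $\xi$ must hit the level $-1$ before time $e\delta$; otherwise $\xi>-1$ on all of $[0,e\delta]\subseteq[0,T]$, so $Y\ge\int_0^{e\delta}e^{\xi}\ge e^{-1}\cdot e\delta=\delta$, a contradiction. Therefore
\[
  \TechMeas(Y<\delta)\le \TechMeas\!\Big(\inf_{[0,e\delta]}\xi\le -1\Big).
\]
For $\TechMeas=\Wiener_{\sigma^2}$ this is handled by the reflection principle together with the Gaussian tail bound: $\Wiener_{\sigma^2}\big(\inf_{[0,t]}\xi\le-1\big)=2\,\Prob\big(\mathcal N(0,\sigma^2 t)\ge 1\big)\le e^{-1/(2\sigma^2 t)}$, so taking $t=e\delta$ gives the desired bound with $c=1/(2e\sigma^2)$ and $\delta_0=T/e$. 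For $\TechMeas=\WS{\sigma^2}{a}{T}$, I would restrict to $\delta$ so small that $e\delta\le T/2$; then the event above is $\mathcal F_{T/2}$-measurable, and the law of $\xi|_{[0,T/2]}$ under the normalised bridge is absolutely continuous with respect to its law under $\Wiener_{\sigma^2}$, with Radon--Nikodym derivative $\tfrac{p_{T/2}(a-\xi(T/2))}{p_T(a)}$ (a ratio of Gaussian densities) bounded by a constant $K=K(a,T,\sigma^2)$. This reduces the bridge case to the Brownian motion case and again yields $\TechMeas(Y<\delta)\le K e^{-1/(2e\sigma^2\delta)}$ for $\delta\le\delta_0:=T/(2e)$.

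The step I would be most careful about is the passage from Brownian motion to the Brownian bridge: one must localise to the initial subinterval $[0,T/2]$ so that the relevant first-passage event becomes $\mathcal F_{T/2}$-measurable before invoking the bounded likelihood ratio, and one must keep track of the constant relating $\WS{\sigma^2}{a}{T}$ to the genuine bridge probability measure as in Definition~\ref{defn:BB}. I expect no other obstacle; in particular the qualitative message is that $Y$ can only be small if $\xi$ descends steeply near $\tau=0$, an event of probability $e^{-\Theta(1/\delta)}$, which is comfortably enough decay to absorb the factor $e^{\eps/Y}$ for $\eps$ small.
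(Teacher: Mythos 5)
Your proof is correct and follows essentially the same route as the paper: reduce to the normalised probability measure, observe that $\int_0^T e^{\xi}<\delta$ forces $\xi$ to drop below $-1$ before time $e\delta$, and bound that first-passage probability to get an exponential small-ball (equivalently, exponential tail for $1/\!\int_0^T e^{\xi}$) estimate, which yields finiteness of the $\eps$-exponential moment for small $\eps$. The paper simply asserts the bound $\Prob[\min_{[0,e\lambda^{-1}]}\xi<-1]\le C^{-1}e^{-C\lambda}$ for both the Brownian motion and the bridge, whereas you supply the details (layer-cake integration, reflection principle, and the bounded likelihood ratio of the bridge against Brownian motion on $[0,T/2]$), all of which are accurate.
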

\begin{proof}

If $\TechMeas = \WS{\sigma^2}{a}{T}$, then let $\widetilde{\xi}$ be a Brownian Bridge distributed according to the probability measure $\sqrt{2\pi T}\sigma\exp\left(\frac{a^2}{2T \sigma^2}\right) \d \WS{\sigma^2}{a}{T}(\widetilde{\xi})$, and according to $\Wiener_{\sigma^2}$ otherwise.
Then 
\begin{equation} 
\Prob\left[
\frac{1}{\int_0^{T} e^{\widetilde{\xi}(\tau)}\d \tau} > \lambda
\right] 
\leq 
\Prob\left[
\min_{t\in[0, e \lambda^{-1})]} \widetilde{\xi}(t)<-1
\right]
\leq C^{-1} e^{-C\lambda} ,
\end{equation}
for some $C>0$, independent of $\lambda>10/T$. 
\end{proof}

Recall the left-composition operator $L_{f}$ as in \eqref{eq:39}.
The following lemma is key for the calculations. It verifies formula (7) in \citep{BelokurovShavgulidzeExactSolutionSchwarz} and (2.7) in \citep{BelokurovShavgulidzeCorrelationFunctionsSchwarz}, which are key for the calculations.

\begin{lmm}\label{lemmaWienerMeasureChange}
Let $f\in \Diff^3[0,1]$, and let $f^{\sharp} \Wiener_{\sigma^2} = f^{-1}_{\sharp} \Wiener_{\sigma^2}$ be the push-forward of $\Wiener_{\sigma^2}$ under $\A^{-1} \circ L_{f^{-1}} \circ \A =\left(\A^{-1} \circ L_{f} \circ \A\right)^{-1}$. 
Then 
\begin{multline}\label{eqWienerMeasureChange}
\frac{\d f^{\sharp}\Wiener_{\sigma^2}(W)}{\d \Wiener_{\sigma^2}(W)} 
= \frac{1}{\sqrt{f'(0)f'(1)}}\\
\times\exp\left\{
\frac{1}{\sigma^2}\left[\frac{f''(0)}{f'(0)}\A_W'(0)-\frac{f''(1)}{f'(1)}\A_W'(1)\right]
+
\frac{1}{\sigma^2}\int_0^1 \Schw_f \Big(\A_W(t)\Big)\Big(\A_W'(t)\Big)^2\d t
\right\}.
\end{multline}
\end{lmm}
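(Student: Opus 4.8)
\emph{Setup and reformulation.} The map $\A^{-1}\circ L_{f^{-1}}\circ\A$ is the inverse of $T_f:=\A^{-1}\circ L_{f}\circ\A$, and a short computation from $(f\circ\A_\xi)'=(f'\circ\A_\xi)\,\A_\xi'$ together with $\A_\xi(0)=0$ and $\xi(0)=0$ gives
\[
  T_f(\xi)(t)=\xi(t)+\log f'(\A_\xi(t))-\log f'(0),
\]
equivalently $\A_{T_f(\xi)}=f\circ\A_\xi$. Hence $f^{\sharp}\Wiener_{\sigma^2}$ is the law of $\widetilde W:=T_{f^{-1}}(W)$ for $W\sim\Wiener_{\sigma^2}$, and \eqref{eqWienerMeasureChange} asserts that this law has the stated density. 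The plan is to obtain this density from a Cameron--Martin/Girsanov computation made rigorous by a finite-dimensional polygonal approximation; the approximation is essentially forced because $\A_\xi$ is an \emph{anticipating} functional of $\xi$ (its normalisation $\int_0^1 e^{\xi}$ depends on the whole path), so $T_f$ is not an adapted shift and ordinary Girsanov does not apply directly.

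\emph{Finite-dimensional model.} Fix a partition $0=s_0<\dots<s_n=1$, put $\delta_i=s_i-s_{i-1}$, and identify a path with the vector $v=(v_1,\dots,v_n)$ of its values at the nodes ($v_0=0$) via polygonal interpolation; under $\Wiener_{\sigma^2}$ this vector has product Gaussian density $p_n(v)=\prod_i(2\pi\sigma^2\delta_i)^{-1/2}\exp\{-(v_i-v_{i-1})^2/(2\sigma^2\delta_i)\}$. Replace $\A_\xi(s_j)$ by $\A^n_v(s_j)=\big(\sum_{i\le j}e^{v_{i-1}}\delta_i\big)\big/\big(\sum_{i\le n}e^{v_{i-1}}\delta_i\big)$ and let $T^n_f\colon\R^n\to\R^n$ have components $T^n_f(v)_j=v_j+\log f'(\A^n_v(s_j))-\log f'(0)$, with smooth inverse $T^n_{f^{-1}}$. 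The change-of-variables formula on $\R^n$ then gives the density of $(T^n_{f^{-1}})_*(p_n\,\d v)$ with respect to $p_n\,\d v$ as $\big(p_n(T^n_f(v))/p_n(v)\big)\,\bigl|\det DT^n_f(v)\bigr|$. With $a_j=\log f'(\A^n_v(s_j))-\log f'(0)$, so $T^n_f(v)=v+a$, the Gaussian ratio equals $\exp\{-\tfrac1{\sigma^2}\sum_i\tfrac{(v_i-v_{i-1})(a_i-a_{i-1})}{\delta_i}-\tfrac1{2\sigma^2}\sum_i\tfrac{(a_i-a_{i-1})^2}{\delta_i}\}$, while $DT^n_f=I+(\partial a/\partial v)$ decomposes as $I+L+ur^{\mathsf{T}}$ with $L$ strictly lower triangular (the ``adapted'' part, with $\det(I+L)=1$) and $ur^{\mathsf{T}}$ a rank-one correction from differentiating the normalising sum, so $\det DT^n_f=1+r^{\mathsf{T}}(I+L)^{-1}u$ by the matrix-determinant lemma.

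\emph{Limit and emergence of the Schwarzian.} Refining the mesh, $\A^n_W\to\A_W$ uniformly $\Wiener_{\sigma^2}$-a.s., so it suffices to show the discrete densities converge in $L^1(\Wiener_{\sigma^2})$ to a limit $D$ with $\int F\cdot D\,\d\Wiener_{\sigma^2}=\int F\circ T_{f^{-1}}\,\d\Wiener_{\sigma^2}$ for cylinder $F$. On the Gaussian side, $\tfrac1{2\sigma^2}\sum_i\tfrac{(a_i-a_{i-1})^2}{\delta_i}\to\tfrac1{2\sigma^2}\int_0^1\big(\tfrac{f''}{f'}(\A_W)\A_W'\big)^2\,\d t$ and the mixed sum tends to $\tfrac1{\sigma^2}\int_0^1\tfrac{f''}{f'}(\A_W(t))\A_W'(t)\,\d W(t)$; using $\d(\A_W')=\A_W'\,\d W+\tfrac{\sigma^2}{2}\A_W'\,\d t$ and one integration by parts, this integral equals $-\int_0^1(\tfrac{f''}{f'})'(\A_W)(\A_W')^2\,\d t+\tfrac{f''(1)}{f'(1)}\A_W'(1)-\tfrac{f''(0)}{f'(0)}\A_W'(0)-\tfrac{\sigma^2}{2}\log\tfrac{f'(1)}{f'(0)}$. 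Combining with the ``$-\tfrac12(\cdot)^2$'' term and using the identity $\Schw_f=(\log f')''-\tfrac12((\log f')')^2=(\tfrac{f''}{f'})'-\tfrac12(\tfrac{f''}{f'})^2$, the Gaussian ratio converges to $\sqrt{f'(1)/f'(0)}\,\exp\{\tfrac1{\sigma^2}[\tfrac{f''(0)}{f'(0)}\A_W'(0)-\tfrac{f''(1)}{f'(1)}\A_W'(1)]+\tfrac1{\sigma^2}\int_0^1\Schw_f(\A_W(t))\A_W'(t)^2\,\d t\}$. In parallel, the limiting Volterra equation for $r^{\mathsf{T}}(I+L)^{-1}u$ is solved by a short linear-ODE computation and yields $\det DT^n_f\to 1/f'(1)$; the product with the It\^o-correction factor $\sqrt{f'(1)/f'(0)}$ is exactly the prefactor $(f'(0)f'(1))^{-1/2}$, so $D$ equals the right-hand side of \eqref{eqWienerMeasureChange}. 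The uniform integrability needed for the $L^1$-convergence — control of $1/\int_0^1 e^{\xi}$, hence of $\A_\xi'$ and of the discrete densities and Jacobians — is supplied by the a priori exponential-moment bound of Lemma~\ref{lmmExpMomentAprioriBound}.

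\emph{Main obstacle.} The delicate part is this passage to the limit: because $\A_\xi$ is anticipating, the transformation carries a genuine rank-one Jacobian correction (not the trivial Jacobian of an adapted shift) together with an It\^o correction, and one must verify that after the limit these collapse precisely into the single constant $(f'(0)f'(1))^{-1/2}$ and the boundary terms, leaving the bulk density exactly $\exp\{\tfrac1{\sigma^2}\int_0^1\Schw_f(\A_W(t))\A_W'(t)^2\,\d t\}$ and nothing more — this exact cancellation is what singles out the Schwarzian (rather than any other second-order differential expression) as the action. Performing the It\^o/Jacobian bookkeeping in a form robust to the anticipating functional, and controlling the limits uniformly via Lemma~\ref{lmmExpMomentAprioriBound}, is the heart of the argument. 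An essentially equivalent alternative is to apply Ramer's anticipating change-of-variables formula to $T_f$ on $\Cfree[0,1]$ directly and carry out the same cancellation at the level of Skorokhod integrals.
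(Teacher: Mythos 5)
Your reduction is set up correctly ($T_f(\xi)=\xi+\log f'(\A_\xi)-\log f'(0)$), and your bookkeeping checks out: the integration-by-parts identity for the mixed term, the limit $\det DT^n_f\to e^{-h(1)}\int_0^1e^{h(s)}\d s=1/f'(1)$ (with $h=\log f'-\log f'(0)$, using $f(1)-f(0)=1$), and the combination $\sqrt{f'(1)/f'(0)}\cdot(1/f'(1))=(f'(0)f'(1))^{-1/2}$ reproduce \eqref{eqWienerMeasureChange} exactly. The route is, however, genuinely different from the paper's: instead of discretising, the paper applies the anticipating Girsanov theorem \citep[Theorem~4.1.2]{Nualart_Malliavin} directly to the shift $u(t)=\sigma^{-1}h'(\A_W(t))\A_W'(t)$, obtaining the density $\det\nolimits_2(1+K)\exp\{-\delta(u)-\tfrac12\int_0^1u^2\}$; the Skorokhod integral $\delta(u)$ is reduced to an It\^o integral via \citep[Theorem~3.2.9]{Nualart_Malliavin} (this is where your boundary terms and $-\sigma^{-2}\int h''(\A_W)\A_W'^{\,2}$ arise), and the Carleman--Fredholm determinant of the ``Volterra plus rank one'' kernel is evaluated by \citep[Chapter~XIII, Corollary~1.2]{bookDeterminants} through a $2\times2$ linear ODE --- the same computation as your $1+r^{\mathsf T}(I+L)^{-1}u$. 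In other words, the ``essentially equivalent alternative'' in your last sentence \emph{is} the paper's proof; your polygonal scheme would, if completed, be a more elementary, self-contained substitute for that machinery.

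The gap is that the two limits your scheme rests on are asserted rather than proved, and they are precisely where the anticipating difficulty sits. First, the coefficients in the mixed sum and in $DT^n_f$ depend on the whole path through $Z_n=\sum_ie^{v_{i-1}}\delta_i$, so the increments $\Delta W_j$ are not independent of them; the limit you call an ``It\^o integral'' is a forward-type integral of a non-adapted integrand, and showing that no extra correction appears (and that the discrete determinants converge to the Fredholm-type limit) cannot be done by naive pathwise estimates --- replacing $\A^n$ by $\A_W$ already produces errors of order $\omega_W(\delta)\sum_j|\Delta W_j|\sim\sqrt{\delta\log(1/\delta)}\,\sqrt n$, which does not vanish --- so one is pushed back to $L^2$/Malliavin-type arguments, i.e.\ essentially the Skorokhod calculus the paper invokes, or to substantial ad hoc work. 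Second, the $L^1$-convergence of the discrete densities does not follow from Lemma~\ref{lmmExpMomentAprioriBound} alone: that lemma gives an exponential moment of $1/\int_0^1e^{\xi}$ only for some unspecified small $\eps>0$, while the densities contain factors such as $\exp\{\sigma^{-2}|f''(0)/f'(0)|\,\A_W'(0)\}$ whose coefficient may exceed that $\eps$, so uniform integrability needs a truncation/localisation or a Scheff\'e-type argument (a.s.\ convergence together with $\E D_n=1$ and a separate proof that the limit has unit mass), none of which is sketched. A smaller point: the exact finite-$n$ change of variables concerns the discretised map $T^n_f$, not the restriction of $T_f$ to the nodes, so the convergence of the maps and of the densities must be handled simultaneously when identifying the limiting law as $f^{\sharp}\Wiener_{\sigma^2}$.
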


\begin{proof}
Let $B(t) = \sigma^{-1} W(t)$ be a standard Brownian motion.
The transformation $W \mapsto \A^{-1} \circ L_f \circ \A (W)$ corresponds to the transformation of $B(t)$ given by
\begin{equation}\label{eqLemmaWienerChangeTransformation}
B(t) \mapsto
B(t) + 
\sigma^{-1}\log f'\left(\frac{\int_0^t e^{\sigma B(\tau)}\d\tau}{\int_0^1 e^{\sigma B(\tau)}\d\tau}\right) 
- \sigma^{-1}\log f'(0).
\end{equation}
The problem reduces to the calculation of Radon-Nikodym derivative of the probability measure, corresponding to the standard Brownian motion under the transformation inverse to \eqref{eqLemmaWienerChangeTransformation}.
Denote
\begin{equation}
\varphi(t) = \frac{\int_0^t e^{\sigma B(\tau)}\d\tau}{\int_0^1 e^{\sigma B(\tau)}\d\tau},
\end{equation}
and
\begin{equation}\label{eqLemmaWienerChangeDefh}
h(x) = \log f'(x) -  \log f'(0).
\end{equation}
Using \citep[Theorem 4.1.2]{Nualart_Malliavin}, the Radon-Nikodym derivative is given by
\begin{equation}\label{eqLemmaWienerChangeFormulaRadonNik}
\det\nolimits_2\left(1+K\right)\exp\left(-\delta(u)-\frac{1}{2}\int_0^1 u^2(t)\d t \right),
\end{equation} 
where $\delta$ denotes Skorokhod integral, $\det\nolimits_2$ is a Hilbert-Carleman (or Carleman-Fredholm) determinant (see, e.g. \citep[Chapter X]{bookDeterminants}),
$u(t)=u[\varphi](t)$ is given by
\begin{equation}
u(t) =\frac{\d}{\d t} \sigma^{-1}h\big(\varphi(t)\big) = \sigma^{-1} h'\big(\varphi(t)\big)\varphi'(t),
\end{equation}
and $K = K[\varphi]$ is the Fr\'{e}chet derivative of the map
\begin{equation}
(B(t))_{t\in [0,1]}
\mapsto
(\sigma^{-1}h(\varphi(t)))_{t\in [0,1]}
=
\left( \sigma^{-1}\log f'\left(\frac{\int_0^t e^{\sigma B(\tau)}\d\tau}{\int_0^1 e^{\sigma B(\tau)}\d\tau}\right) 
- \sigma^{-1}\log f'(0) \right)_{t\in [0,1]}
\end{equation}
with respect to the Cameron-Martin space of Brownian motion, which we associate with the Sobolev space
$\tilde{H}^1 = \left\{g:\, \|g\|_{\tilde{H}^1} = \int_0^1 \big(g'(t)\big)^2\d t<\infty, \, g(0)=0 \right\}$.
Direct calculation shows that
\begin{equation}\label{eqLemmaWienerChangeDifferentialFormula}
(K g)(t) =  \int_0^1 k(t, s)g(s) \d s, \qquad \text{with }
k(t,s) = -h'\left(\varphi(t)\right) \varphi(t) \varphi'(s) + \one_{s<t}\, h'\left(\varphi(t)\right) \varphi'(s).
\end{equation}

\noindent
\textbf{Integral.}
First, we calculate Skorokhod integral
\begin{equation}
\delta (u)=
\sigma^{-1}\delta\Big( h'(\varphi(t))\varphi'(t)\Big)
=
\sigma^{-1} \delta\left(h'
\left(\frac{\int_0^t e^{\sigma B(\tau)}\d\tau}{\int_0^1 e^{\sigma B(\tau)}\d\tau}\right)
 \frac{e^{\sigma B(t)}}{\int_0^1 e^{\sigma B(\tau)}\d\tau}\right).
\end{equation}
The process is not adapted because of the term $\int_0^1 e^{\sigma B(\tau)}\d\tau$. 
We use \citep[Theorem 3.2.9]{Nualart_Malliavin} in order to reduce the Skorokhod integral to a It\^{o} integral. 
It follows from Lemma~\ref{lmmExpMomentAprioriBound} that the random variable
\begin{equation}
\frac{1}{\int_0^1 e^{\sigma B(\tau)}\d\tau}
\end{equation}
is Malliavin smooth and that
\begin{equation}
D_t \left(\frac{1}{\int_0^1 e^{\sigma B(\tau)}d\tau}\right) 
= \frac{\sigma \int_t^1 e^{\sigma B(\tau)}\d\tau}{\left(\int_0^1 e^{\sigma B(\tau)}\d\tau \right)^2}.
\end{equation}
Thus, since $u(t)$ is also Malliavin smooth, using \citep[Theorem 3.2.9]{Nualart_Malliavin} we get
\begin{multline}\label{eqLemmaWienerChangeSkorokhod}
\delta\left(h'
\left(\frac{\int_0^t e^{\sigma B(\tau)}\d\tau}{\int_0^1 e^{\sigma B(\tau)}\d\tau}\right) 
\frac{e^{\sigma B(t)}}{\int_0^1 e^{\sigma B(\tau)}\d\tau}\right) 
= 
\left.\int_0^1 h'\left(F\int_0^t e^{\sigma B(\tau)}\d\tau\right) F\, e^{\sigma B(t)}\,\d B(t)\right|_{F= \left(\int_0^1 e^{\sigma B(\tau)}\d\tau\right)^{-1}}\\
+ \int_0^1 \Big[ 
h''\big(\varphi(t)\big) \varphi(t)\varphi'(t)
+  h'\big(\varphi(t)\big)\varphi'(t)
\Big]
\cdot
\left(
\frac{\sigma\int_t^1 e^{\sigma B(\tau)}d\tau }{\int_0^1 e^{\sigma B(\tau)}d\tau}
\right)
\d t.
\end{multline}
We observe that 
the expression in square brackets is equal to
\begin{equation}
\frac{\d}{\d t}\Big[h'(\varphi(t))\varphi(t)\Big],
\end{equation}
and calculate the second term in the right-hand side of \eqref{eqLemmaWienerChangeSkorokhod} by integrating by parts
\begin{multline}
\int_0^1 \Big[ 
h''\big(\varphi(t)\big) \varphi(t)\varphi'(t)
+  h'\big(\varphi(t)\big)\varphi'(t)
\Big]
\cdot
\left(
\frac{\sigma\int_t^1 e^{\sigma B(\tau)}\d\tau }{\int_0^1 e^{\sigma B(\tau)}\d\tau}
\right)\d t
\\
=\sigma \int_0^1  h'(\varphi(t))\varphi(t)\frac{e^{\sigma B(t)}}{\int_0^1 e^{\sigma B(\tau)}\d\tau}\d t.
\end{multline}
Integrating by parts once again we obtain 
\begin{align}
\sigma \int_0^1  h'(\varphi(t))\varphi(t)\frac{e^{\sigma B(t)}}{\int_0^1 e^{\sigma B(\tau)}\d\tau}\d t
&= 
\sigma \int_0^1  h'(\varphi(t))\varphi'(t)\varphi(t)\d t\nonumber\\
&=
\sigma \, h(1) - \sigma \int_0^1  h(\varphi(t))\varphi'(t)\d t\nonumber\\
&=
\sigma \, h(1) - \sigma \int_0^1  h(s)\d s.
\end{align}

We calculate the first term in the right-hand side of \eqref{eqLemmaWienerChangeSkorokhod} by replacing $e^{\sigma B(t)}\d B(t) = \sigma^{-1} \d e^{\sigma B(t)} - \frac{\sigma }{2}e^{\sigma B(t)}\d t$, and using It\^{o} integration by parts
\begin{multline}
\left.\int_0^1 h'\left(F\int_0^t e^{\sigma B(\tau)}\d\tau\right) F\, e^{\sigma B(t)}\,\d B(t)\right|_{F= \left(\int_0^1 e^{\sigma B(\tau)}\d\tau\right)^{-1}}\\
= \frac{h'(1) e^{\sigma B(1)}}{\sigma \int_0^1 e^{\sigma B(\tau)}\d\tau} - \frac{h'(0)}{\sigma \int_0^1 e^{\sigma B(\tau)}\d\tau}
- \frac{\sigma}{2}\int_0^1 h'\big(\varphi(t)\big)\varphi'(t)\d t 
-\sigma^{-1} \int_0^1 h''\left(\varphi(t)\right)\varphi'^{\, 2}(t) \d t\\
=
\sigma^{-1} h'(1)\varphi'(1) - \sigma^{-1} h'(0)\varphi'(0)
-\frac{\sigma}{2}h(1) 
- \sigma^{-1}\int_0^1 h''\left(\varphi(t)\right)\varphi'^{\, 2}(t) \d t.
\end{multline}
Therefore,
\begin{equation} \label{eqLemmaWienerChangeSkorokhodResult}
\delta(u) 
= \sigma^{-2} \big(h'(1)\varphi'(1) - h'(0)\varphi'(0)\big) +\frac{1}{2}h(1) - \int_0^1  h(s)\d s - \sigma^{-2}\int_0^1 h''\left(\varphi(t)\right)\varphi'^{\, 2} (t) \d t.
\end{equation}

\noindent
\textbf{Determinant.}
To calculate the Hilbert-Carleman determinant $\det\nolimits_2 \left(1+ K\right)$, with $K$ given by \eqref{eqLemmaWienerChangeDifferentialFormula},
we use \citep[Chapter XIII, Corollary 1.2]{bookDeterminants}.
In the notations from \citep[Chapter XIII]{bookDeterminants} we have
\begin{equation}
k(t,s) = 
\begin{cases}
F_1(t)G_1(s),& 0\leq s<t \leq 1; \\
-F_2(t)G_2(s),& 0\leq t<s \leq 1,
\end{cases}
\end{equation}
with
\begin{equation}
F_1(t) = h'(\varphi(t))\left(1-\varphi(t)\right), \qquad F_2(t) =  h'(\varphi(t))\varphi(t), \qquad G_1(s) = G_2(s) = \varphi'(s).
\end{equation}
According to \citep[Chapter XIII, Corollary 1.2]{bookDeterminants}, 
\begin{equation}\label{eqLemmaWienerChangeDetMain}
\det\nolimits_2 (1+ K) = \det\left(N_1+N_2 U(1)\right)\cdot \exp \left\{\int_0^1 F_2(s)G_2(s)\d s\right\},
\end{equation}
where
\begin{equation}
N_1 = 
\begin{pmatrix}
1 & 0\\
0 & 0
\end{pmatrix}, \quad
N_2 = 
\begin{pmatrix}
0 & 0\\
0 & 1
\end{pmatrix},
\end{equation}
and $U(t)$ is a $2\times 2$ matrix which satisfies the differential equation 
\begin{equation} \label{eqLemmaWienerChangeDetMonodromy}
U(0) = 
\begin{pmatrix}
1 & 0\\
0 & 1
\end{pmatrix}, \quad
U'(t) = 
- \begin{pmatrix}
G_1(t)F_1(t) & G_1(t)F_2(t)\\
G_2(t)F_1(t) & G_2(t)F_2(t)
\end{pmatrix}U(t).
\end{equation}
We start by solving the differential equation \eqref{eqLemmaWienerChangeDetMonodromy}. Note that
\begin{equation} %
 \begin{pmatrix}
G_1(t)F_1(t) & G_1(t)F_2(t)\\
G_2(t)F_1(t) & G_2(t)F_2(t)
\end{pmatrix} 
= h'\big(\varphi(t)\big)\varphi'(t) \begin{pmatrix}
1-\varphi(t) & \varphi(t)\\
1-\varphi(t) & \varphi(t)
\end{pmatrix} 
.
\end{equation}
Therefore, by rewriting \eqref{eqLemmaWienerChangeDetMonodromy}, we see that $U(t) = V\big(\varphi(t)\big)$ with
\begin{equation}
V'(s) = - h'(s) 
\begin{pmatrix}
1-s & s\\
1-s & s
\end{pmatrix} 
V(s) = -h'(s)
\begin{pmatrix}
1\\
1
\end{pmatrix} 
\left(
\frac{1}{2}
\begin{pmatrix}
1 & 1
\end{pmatrix}
+
\frac{2s-1}{2}
\begin{pmatrix}
-1 & 1
\end{pmatrix}
\right)V(s).
\end{equation}
Observe that if 
\begin{equation}
v(t) 
= 
v_1(t)\begin{pmatrix}
1 \\
1
\end{pmatrix} 
+ 
v_2(t)
\begin{pmatrix}
-1 \\
1
\end{pmatrix}
\end{equation}
is a representation of $v(t)=V(t)v(0)$ in terms of the orthogonal basis, then
\begin{align}
v_2(t) &= v_2(0),\\
v_1(t) &=v_1(0) e^{-h(t)}-\int_{0}^t e^{h(s)-h(t)}h'(s)(2s-1)v_2(s)\d s\\
& = v_1(0)e^{-h(t)}-v_2(0)\left((2t-1) + e^{-h(t)}-2e^{-h(t)}\int_0^t e^{h(s)}\d s\right).
\end{align}
In particular, 
\begin{equation}
v_2(1) = v_2(0),
\qquad 
v_1(1)  = v_1(0)e^{-h(1)}+ v_2(0)\left(2e^{-h(1)}\int_0^1 e^{-h(s)}\d s -e^{-h(1)}-1 \right).
\end{equation}
Therefore,
\begin{align}
  V(1) \begin{pmatrix} 1 \\ 1 \end{pmatrix}
  &=
  e^{-h(1)}\begin{pmatrix} 1 \\ 1 \end{pmatrix}
  \\
  V(1) \begin{pmatrix} -1 \\ 1 \end{pmatrix}
  &=
  \begin{pmatrix} -1 \\ 1 \end{pmatrix} +
  \left(2e^{-h(1)}\int_0^1 e^{-h(s)}\d s -e^{-h(1)}-1 \right)
  \begin{pmatrix} 1 \\ 1 \end{pmatrix}
\end{align}
which is equivalent to
\begin{equation}
U(1) = V(1) 
=
\begin{pmatrix}
1+e^{-h(1)}- e^{-h(1)}\int_0^1 e^{h(s)}\d s &
-1 + e^{-h(1)}\int_0^1 e^{h(s)}\d s\\
e^{-h(1)}- e^{-h(1)}\int_0^1 e^{h(s)}\d s&
e^{-h(1)}\int_0^1 e^{h(s)}\d s
\end{pmatrix}.
\end{equation}
Thus,
\begin{equation}\label{eqLemmaWienerChangeDetSmallDet}
 \det\Big(N_1+N_2 U(1)\Big) = e^{-h(1)}\int_0^1 e^{h(s)}\d s.
\end{equation}
Also, 
\begin{equation} \label{eqLemmaWienerChangeDetTrace}
\int_0^1 F_2(s)G_2(s)\d s = \int_0^1 h'\big(\varphi(s)\big)\varphi(s)\varphi'(s)ds 
= \int_0^1 h'(t)t\,\d t = h(1) - \int_0^1 h(t)\d t. 
\end{equation}
Combining \eqref{eqLemmaWienerChangeDetMain}, 
\eqref{eqLemmaWienerChangeDetSmallDet}, and
\eqref{eqLemmaWienerChangeDetTrace} we 
obtain
\begin{equation} \label{eqLemmaWienerChangeDetResult}
  \det\nolimits_2 \left(1+ K\right) = \exp\left(-\int_{0}^1h(t)\d t\right)\cdot\int_0^1 e^{h(s)}\d s
  .
\end{equation}

\noindent
\textbf{Final result.}
Bringing together \eqref{eqLemmaWienerChangeDetResult}, \eqref{eqLemmaWienerChangeSkorokhodResult}, \eqref{eqLemmaWienerChangeDefh},
and the formula for the Radon-Nikodym derivative \eqref{eqLemmaWienerChangeFormulaRadonNik} we get the desired result.
\end{proof}

Proposition~\ref{prpBBMMeasureChange} is a straightforward consequence of the previous lemma.

\begin{proof}[Proof of Proposition~\ref{prpBBMMeasureChange}]
From Lemma~\ref{lemmaWienerMeasureChange} and \eqref{eqWienerSlicingWS}  we have that 
\begin{equation}\label{eqProofPropBBMMeasureChange0}
\d f^{\sharp}\Wiener_{\sigma^2}(W)
=
\int_{\R} 
\d \widetilde{\mathcal{B}}_{\sigma^2}^{\, a, 1}(W)
\d a
\end{equation}
where
\begin{multline}\label{eqProofPropBBMMeasureChange1}
\d \widetilde{\mathcal{B}}_{\sigma^2}^{\, a, 1}(W)
=
\frac{1}{\sqrt{f'(0)f'(1)}}\\
\times\exp\left\{
\frac{1}{\sigma^2}\left[\frac{f''(0)}{f'(0)}\A_W'(0)-\frac{f''(1)}{f'(1)}\A_W'(1)\right]
+
\frac{1}{\sigma^2}\int_0^1 \Schw_f \Big(\A_W(t)\Big)\Big(\A_W'(t)\Big)^2\d t
\right\}
\d\WS{\sigma^2}{a}{1}(W).
\end{multline}
This is a disintegration of $\d f^{\sharp}\Wiener_{\sigma^2}(W)$ into a family of measures, parametrised by $a\in \R$.
Since measures $\WS{\sigma^2}{a}{1}$ are continuous in $a\in\R$ in the sense of Definition~\ref{defMeasureCont} from Appendix~\ref{sec:disintegration} and the density in the right-hand side of \eqref{eqProofPropBBMMeasureChange1} is continuous in $W$, 
by Proposition~\ref{prop_measure_density_contin} we obtain that measures $\widetilde{\mathcal{B}}_{\sigma^2}^{\, a, 1}$ are continuous in $a$ as well.

On the other hand, from \eqref{eqWienerSlicingWS}
\begin{equation}
\d f^{\sharp} \Wiener_{\sigma^2}(W) 
= \int_{\R} \d f^{\sharp} \WS{\sigma^2}{a}{1}(W)\d a
\end{equation}
is also a disintegration of $f^{\sharp} \Wiener_{\sigma^2}$ into a continuous family of measures.

Observe that for any $a\in \R$ the measure $f^{\sharp} \WS{\sigma^2}{a}{1}$ is supported on $\{h\in \Cfree[0, 1]: h(1) = a-b\}$, 
and $\widetilde{\mathcal{B}}_{\sigma^2}^{\, a, 1}$ is supported on  $\{h\in \Cfree[0, 1]: h(1) = a\}$.
Therefore, by Proposition~\ref{prop:disintegration} this disintegrations coincide and $f^{\sharp} \WS{\sigma^2}{a}{1} = \widetilde{\mathcal{B}}_{\sigma^2}^{\, a-b, 1}$, which finishes the proof.
\end{proof}

\section{Schwarzian measure with varying metric}
\label{sec:metric}

In this final section, we explain how Theorem~\ref{thrMeasureSLInv} can be adapted to a varying metric (Theorem~\ref{thr:2}).

\subsection{Definition via reparametrisation}

The measure $\FMeasSL{\rho}$ can be defined from $\FMeas{\sigma^2}$ by reparametrisation and adjustment of the normalisation.
Define the reparametrisation $h$ of $[0,1]$ by
\begin{equation} \label{e:h-rho}
  t \mapsto h(t) = \int_0^t \frac{\rho(\tau)}{\sigma^2} \, \d\tau, \qquad \text{where }\sigma^2  = \int_0^1 \rho(\tau)\, \d\tau,
\end{equation}
and we recall that $\rho$ is the positive square root of $\rho^2$. Then if $\FMeasSL{\rho}$ is defined by
\begin{equation} 
  \int F(\phi) \, \d\FMeasSL{\rho}(\phi)
  = 
  C(\rho)
  \int F(\phi \circ h) \, \d\FMeasSL{\sigma^2}(\phi),
\end{equation}
the change of variables formula  \eqref{eqDiffeoMeasureChange-metric} follows from \eqref{eqDiffeoMeasureChange}.
Indeed, writing $\d\FMeasSL{\rho}(\phi) = C(\rho)\d\FMeasSL{\sigma^2}(\phi\circ y)$ where $y$ is the inverse map to $h$,
and $y'(\tau)^2 \, \d\tau = y'(\tau) \, \d y(\tau) = \d y(\tau)/h'(y(\tau)) =\sigma^2  \d y(\tau)/\rho(y(\tau))$,
so that changing variables from $\tau$ to $y(\tau)$ in the last equality below, we see that \eqref{eqDiffeoMeasureChange} gives
\begin{equation}
  \begin{aligned}
    \frac{\d \psi^{\ast}\!\FMeasSL{\rho}(\phi)}{\d\FMeasSL{\rho}(\phi)}
    &= \frac{\d \FMeasSL{\rho}(\psi\circ \phi)}{\d\FMeasSL{\rho}(\phi)}\\
    &= 
  \exp\left\{ \frac{1}{\sigma^2}
      \int_{\T} \Big[\Schw(\tan(\pi\psi-\tfrac{\pi}{2}),\phi\circ y(\tau))-2\pi^2\Big]\, \phi'(y(\tau))^2y'(\tau)^2 \d \tau \right\}
    \\
    &= \exp\left\{
      \int_{\T} \Big[\Schw(\tan(\pi\psi-\tfrac{\pi}{2}),\phi(\tau))-2\pi^2\Big]\, \phi'(\tau)^2 \frac{\d \tau}{\rho(\tau)} \right\}.
  \end{aligned}
\end{equation}
To fix the normalisation $C(\rho)$, recall the chain rule for the Schwarzian derivative: %
\begin{equation}
  \Schw(f \circ h, \tau) = \Schw(f,h(\tau)) (h'(\tau))^2 + \Schw(h,\tau),
\end{equation}
which implies that
\begin{equation}
  \int_{\T} \Schw(\phi\circ h,\tau) \frac{\d\tau}{\rho(\tau)}  
  =
  \int_{\T} \Schw(h,\tau)\, \frac{\d\tau}{\rho(\tau)}
  +
  \frac{1}{\sigma^2}\int_{\T} \Schw(\phi, \tau) \, \d\tau
  .
\end{equation}
Identifying the left-hand side with the desired action of $\FMeasSL{\rho}$
and the second term on the right-hand side with the action of $\FMeasSL{\sigma^2}$ we fix the constant by defining
\begin{equation} \label{eq:Schw-h-rho}
  C(\rho) = \exp\left\{ \int_{\T}\Schw(h,\tau) \frac{\d\tau}{\rho(\tau)}\right\}
  .
\end{equation}
The term in the exponential can alternatively be written as
\begin{equation} \label{e:Schwh-3}
  \int_{\T} \Schw(h,\tau) \frac{\d\tau}{\rho(\tau)}  =  \frac{1}{2} \int_{\T} \frac{\rho'(\tau)^2}{\rho(\tau)^3} \, \d\tau,
\end{equation}
as can be seen by integrating by parts:
\begin{equation} \label{e:Schwh-3-pf}
  \begin{aligned}
  \int_{\T} \Schw(h,\tau) \frac{\d\tau}{h'(\tau)}
  &= \int_{\T} \left(\left(\frac{h''(\tau)}{h'(\tau)}\right)'-\frac12 \left(\frac{h''(\tau)}{h'(\tau)}\right)^2\right) \frac{\d\tau}{h'(\tau)}
    \\
  &= \int_{\T} \left(-\left(\frac{h''(\tau)}{h'(\tau)}\right)\left(\frac{1}{h'(\tau)}\right)'-\frac12 \left(\frac{h''(\tau)^2}{h'(\tau)^3}\right)\right) \d\tau
    \\
  &= \frac12 \int_{\T} \frac{h''(\tau)^2}{h'(\tau)^3} \d\tau.
  \end{aligned}
\end{equation}
This normalisation agrees with the $\zeta$-regularisation of the partition function of the Brownian Bridge with metric $\rho^2$
and then defining the measure using the same procedure as in Section~\ref{sec:construction}. The details are outlined in the next subsection.

\subsection{Normalisation of the measure}

We outline how following the same procedure as in Section~\ref{sec:construction} with $\zeta$-regularisation for the partition
function of the Brownian Bridege leads to the normalisation of the  measure $\FMeasSL{\rho}$ defined by \eqref{eq:Schw-h-rho}.

\paragraph{Step 1. Unnormalised Brownian Bridge.}

In terms of the reparametrisation \eqref{e:h-rho}, define the unnormalised Brownian Bridge measure $\WS{\rho}{0}{1}$ by
\begin{equation} \label{e:BB-repar}
  \int F(\xi)
  \, \d\WS{\rho}{0}{1}(\xi)
  =
  \int F(\xi \circ h)
  \, \d\WS{\sigma^2}{0}{1}(\xi)
  .
\end{equation}
It is standard that if $\xi$ is distributed according to the normalised version of the measure $\WS{\sigma^2}{0}{1}$
then the time-changed process $\xi \circ h$ has quadratic variation $(\int_0^t \rho(\tau)\, \d\tau)_{t}$
and that this corresponds to the following heuristic change of variables in the action of the Brownian Bridge:
\begin{equation}
  \begin{aligned}
  \frac{1}{\sigma^2} \int_0^T \xi'(\tau)^2\, \d\tau
  &=\frac{1}{\sigma^2} \int_0^T \xi'(h(\tau))^2 h'(\tau)\, d\tau
  \\
  &=\frac{1}{\sigma^2} \int_0^T (\xi\circ h)'(\tau)^2 \frac{d\tau}{h'(\tau)}
    = \int_0^T (\xi\circ h)'(\tau)^2 \frac{d\tau}{\rho(\tau)}.
  \end{aligned}
\end{equation}
The normalisation of the Brownian Bridge measure implied by \eqref{e:BB-repar} %
also coincides with square root of the $\zeta$-function normalised determinant (up to an overall constant).
Indeed, if
$\Delta_\rho = \rho^{-1} \frac{\partial}{\partial \tau} (\rho^{-1} \frac{\partial}{\partial \tau})$
is the Laplace--Beltrami operator on $[0,1]$ with metric $\rho^2$ and Dirichlet boundary condition, then
\begin{equation} \label{e:det-rho}
  {\det}_{\zeta}\left(-\Delta_\rho\right) = C \int_0^1 \rho(t)  \,\d t = C\sigma^2,
\end{equation}
where ${\det}_{\zeta}$ is the $\zeta$-regularised determinant and $C$ is a constant independent of $\rho$.
The determinant is defined by (see for example \citep{OsgoodPhillipsSarnak})
\begin{equation}
  {\det}_{\zeta}\left(-\Delta_\rho\right) = e^{-\zeta'(0)},
\end{equation}
where $\zeta$ is the spectral $\zeta$-function, i.e.\ the analytic continuation
of $\zeta(s) = \sum_n \lambda_n^{-s}$ where $\lambda_n$ are the eigenvalues of $-\Delta_\rho$.
To see the equality \eqref{e:det-rho} one can adapt the argument leading to \citep[Equation~(1.13)]{OsgoodPhillipsSarnak} to $d=1$.

\paragraph{Step 2. Unnormalised Malliavin--Shavgulidze measure.}

Define the measure $\mu_\rho$ %
analogously to \eqref{defMeasureMu} except with the following additional prefactor motivated by the Schwarzian action:
\begin{equation}\label{defMeasureMu-metric}
\d\mu_{\rho}(\phi) \coloneqq 
\exp\left\{\int_{\T} \Big(\frac{\phi''(\tau)}{\phi'(\tau)}\Big)' \, \frac{\d\tau}{\rho(\tau)}\right\}
\d \WS{\rho}{0}{1}(\xi)
\otimes \d\Theta,
\end{equation}
again with
$\phi(t) = \Theta + \A_{\xi}(t)\enspace (\mathrm{mod}\, 1)$ and $\A_\xi(t)$
given by \eqref{defP},
and where the term in the exponential is interpreted as the It\^o integral
\begin{equation}
  \int_{\T} \Big(\frac{\phi''(\tau)}{\phi'(\tau)}\Big)' \, \frac{\d\tau}{\rho(\tau)}
  = \int_0^1 \xi''(\tau) \, \frac{\d\tau}{\rho(\tau)}
  =
  \int_0^1 \frac{\rho'(\tau)}{\rho(\tau)^2} \, \d\xi(\tau)
  .
\end{equation}
Thus $\mu_\rho$ has formal density
\begin{equation}  \label{e:MeasureMuFormal-rho}
  \d\mu_{\rho}(\phi)
  =
  \exp\left\{\int_{\T} \Schw(\phi,\tau)
    \frac{\d\tau}{\rho(\tau)}\right\} \prod_{\tau \in \T}\frac{\d\phi(\tau)}{\phi'(\tau)}.  
\end{equation}
It can be checked from Girsanov's theorem that
  \begin{equation} \label{e:mu-rho-sigma}
    \int F(\phi)
    \, \d\mu_\rho(\phi)
    =
    \exp\left\{\frac{1}{2} \int_{\T} \frac{\rho'(\tau)^2}{\rho(\tau)^3} \, \d\tau\right\}
    \int F(\phi \circ h)
    \, \d\mu_{\sigma^2}(\phi).
  \end{equation}

\paragraph{Step 3. Schwarzian measure.}
In view of \eqref{eq:1-h} and \eqref{defMeasureMu-metric},
the unquotiented Schwarzian measure should be defined by
\begin{equation} \label{defMeasureMeas}
\d\FMeasSL{\rho}(\phi) = \exp\left\{ 2\pi^2\int_{\T} \phi'^{\, 2}(\tau)\frac{\d\tau}{\rho(\tau)}\right\}\d \mu_{\rho}(\phi).
\end{equation}
It can be checked that it agrees with the previous definition including the normalisation.
The constant $C(\rho)$ comes from \eqref{e:mu-rho-sigma}.

\begin{rmrk}
The quadratic variation of $(\log \phi'(\tau))_{\tau \in [0,1)}$ does not depend on the representative of $\phi \in \Diff^1(\T)/\SL(2,\R)$
because the action by $\SL(2,\R)$ is by post-composition with a smooth function.
It follows from the construction of the measure $\FMeas{\rho}$ that, almost surely under the normalised version of  $\FMeas{\rho}$,
this quadratic variation is given by $\tau \mapsto \int_0^\tau \rho(t)\d{t}$.
\end{rmrk}

\subsection{Calculation of formal correlation functions}
\label{sec:appendix-formal-corr}

The \emph{truncated} correlation functions are formally defined as the functional derivatives
\begin{equation}
  \label{eq:31}
  \langle \Schw(\tau_{1});\cdots;\Schw(\tau_{k})\rangle_{\sigma^{2}}
  \coloneqq
  \frac{\delta}{\delta(1/\rho)(\tau_{1})}\cdots \frac{\delta}{\delta(1/\rho)(\tau_{k})}\Big\vert_{\rho =\sigma^{2}} \log \PartFSL(\rho).
\end{equation}
In the following we make sense of this expression in a distributional sense by considering the $k$-th variation of $\log \PartFSL(\rho)$:
For $h_{1},\ldots,h_{k} \in C^{\infty}(\T)$ define $\rho_{\epsilon_{1}, \ldots, \epsilon_{k}}$ via $1/\rho_{\epsilon_{1}, \ldots, \epsilon_{k}} = 1/\rho + \sum_{i=1}^{k} \epsilon_{i}h_{i}$.
Then
\begin{equation}
  \label{eq:36}
  \begin{aligned}
    [D^{k}_{1/\rho}\log \PartFSL(\rho)](h_{1},\ldots,h_{k})
    &\coloneqq
    \frac{\partial^{k}}{\partial \epsilon_1 \cdots \partial \epsilon_k}\big\vert_{\epsilon = 0} \log(\PartFSL(\rho_{\epsilon_{1},\ldots,\epsilon_{k}}))\\
    &= \int \prod_{i=1}^{k}\d\tau_{i}\, h(\tau_{1})\cdots h(\tau_{k})\, \langle\Schw(\tau_{1});\cdots;\Schw(\tau_{k})\rangle_{\rho},
  \end{aligned}
\end{equation}
where the last line is understood as defining \eqref{eq:31} as a $k$-variate distribution.

\begin{prp}\label{prop:corr-fcts-functional}
  For $\sigma^{2} > 0$, $k\geq 1$ and $h_{1},\cdots,h_{k} \in C^{\infty}(\T)$ we have
  \begin{equation}
    \label{eq:40}
    \begin{aligned}
      &\hspace*{-2em}
        [D^{k}_{1/\rho}\log \PartFSL(\rho)]\Big\vert_{\rho=\sigma^{2}}(h_{1},\ldots,h_{k})\\
      &=
        (-1)^{k} k!\, \sigma^{2(k-1)} \sum_{1\leq i < j \leq k} \int \d\tau h'_{i}h'_{j} \;{\textstyle\prod_{l \neq i,j}} h_{l}\\
      &\hspace*{2em}+
        (-1)^{k}\sigma^{2k} \sum_{\pi \in \mathrm{Part}[k]} \sigma^{2 |\pi|} [\log \PartFSL]^{(|\pi|)}(\sigma^{2}) \prod_{B\in \pi} \left( |B|! \int \d\tau \prod_{b\in B}h_{b}(\tau) \right),
    \end{aligned}
  \end{equation}
  where the sum in the last line is over all partitions $\pi = \{B_{1}, \ldots, B_{|\pi|}\}$ of $\{1,\ldots,k\}$.
\end{prp}

\begin{crl}
  For non-coinciding $\tau_{1}, \cdots, \tau_{k} \in \T$ we have
  \begin{equation}
    \label{eq:41}
    \langle \Schw(\tau_{1});\cdots;\Schw(\tau_{k})\rangle_{\sigma^{2}}
    =
    (-1)^{k}\sigma^{4k} [\log \PartFSL]^{(k)}(\sigma^{2})
    = 2\pi^{2} k!\, \sigma^{2(k-1)} + \tfrac32 (k-1)!\, \sigma^{2k}
    .
  \end{equation}
\end{crl}

Thus the Schwarzian correlators are constant away from coinciding points and their values (up to factors of $\sigma^{2}$)
are given by the cumulants of the Boltzmann-weighted spectral density $e^{-\sigma^{2} E} \nu(E)\d{E}$, see Remark~\ref{rk:spectraldensity}.
The untruncated Schwarzian correlators for non-coinciding points are therefore equal to the moments of the spectral measure $\rho(E)$ (again up to factors of $\sigma^2$).
We note that this relationship has been predicted by Stanford and Witten \cite[Appendix~C]{StanfordWittenFermionicLocalization}.

More generally, by \eqref{eq:36} and \eqref{eq:40} we can express the truncated Schwarzian correlators completely in terms of multivariate (derivatives of) $\delta$-functions in the variables $\tau_{i}$.
While the general expression is somewhat messy, we can easily derive \eqref{eq:22} and \eqref{eq:23}:

\begin{equation}
  \label{eq:42}
  \langle \Schw(\tau) \rangle_{\sigma^{2}}
  = - \sigma^{4} [\log \PartFSL]'(\sigma^{2})
  = 2\pi + \tfrac32\sigma^{2}
\end{equation}
and
\begin{equation}
  \label{eq:43}
  \begin{aligned}
    \langle \Schw(0)\Schw(\tau) \rangle_{\sigma^{2}}
    &=\langle \Schw(0);\Schw(\tau) \rangle_{\sigma^{2}}
      + \langle \Schw(0) \rangle_{\sigma^{2}} \langle \Schw(\tau) \rangle_{\sigma^{2}}\\
    &= [2\pi^{2} \sigma^{2} + \tfrac32 \sigma^{4} + (2\pi + \tfrac32\sigma^{2})^{2}]
    - 2\sigma^{2} [(2\pi + \tfrac32\sigma^{2})] \delta(\tau) - 2 \sigma^{2} \delta''(\tau).
  \end{aligned}
\end{equation}

\begin{proof}[Proof of Proposition~\ref{prop:corr-fcts-functional}]
  We treat the first and second summand in
  \begin{equation}
    \label{eq:47}
    \log \PartFSL(\rho) = \frac12 \int \frac{\rho'^{\, 2}}{\rho^{3}} + \log \PartFSL(\sigma^{2}_{\rho})
  \end{equation}
  separately.
  For the second term, we simply apply the multivariate version Fa\`{a} di Bruno's formula:
  \begin{equation}
    \label{eq:45}
    \begin{aligned}
      [D^{k}_{1/\rho}\log \PartFSL(\sigma^{2}_{\rho})]\Big\vert_{\rho=\sigma^{2}}(h_{1},\ldots,h_{k})
      =
      \sum_{\pi \in \mathrm{Part}[k]} [\log \PartFSL]^{(|\pi|)}(\sigma^{2}) \prod_{B\in \pi}\left( [D^{|B|}_{1/\rho} \sigma^{2}_{\rho}]\Big\vert_{\rho=\sigma^{2}}(\{h_{b}\}_{b\in C})\right).
    \end{aligned}
  \end{equation}
  Recall that $\sigma^{2}_{\rho} = \int\rho$ and check that
  \begin{equation}
    \label{eq:46}
    [D^{|B|}_{1/\rho} \sigma^{2}_{\rho}]\Big\vert_{\rho=\sigma^{2}}(\{h_{b}\}_{b\in C})
    = (-1)^{|B|} |B|!\, \sigma^{2(|B| + 1)} \int \prod_{b\in B} h_{b}.
  \end{equation}
  This yields the second summand on the right-hand side of \eqref{eq:40}.
  For the first summand write
  \begin{equation}
    \label{eq:48}
    F(\tfrac1{\rho})
    \coloneqq
    \frac12 \int \frac{\rho'^{\, 2}}{\rho^{3}}
    =
    \frac12 \int \left(\log\frac1{\rho}\right)' \left(\frac1{\rho}\right)' \d\tau.
  \end{equation}
  For smooth functions $f,h \in C^{\infty}(\T)$ and $f > 0$ we have for $\epsilon > 0$ sufficiently small
  \begin{equation}
    \label{eq:49}
    \begin{aligned}
      \Big(\log(f+\epsilon h)\Big)' (f+\epsilon h)'
      &= (f' + \epsilon h') \left(\log{f} + \sum_{k\geq 1} \frac{(-1)^{k-1}}{k} \epsilon^{k} \left(\frac{h}{f}\right)^{k}\right)'\\
      &= (f' + \epsilon h') \left(\frac{f'}{f} + \sum_{k\geq 1} (-1)^{k-1} \epsilon^{k} \left(\frac{h}{f}\right)^{k-1} \left(\frac{h}{f}\right)' \right),
    \end{aligned}
  \end{equation}
  with the series converging uniformly on $\T$.
  Hence, we have for sufficiently small $\epsilon > 0$ that
  \begin{equation}
    \label{eq:50}
    F(\tfrac{1}{\sigma^{2}} + \epsilon h)
    = \frac12 \sum_{k\geq 2} (-\epsilon)^{k} \sigma^{2(k-1)}  \int h'^{\,2} h^{k-2}.
  \end{equation}
  In other words
  \begin{equation}
    \label{eq:51}
    [D^{k}_{1/\rho} F(1/\rho)]\Big\vert_{\rho=\sigma^{2}}(h,\ldots,h)
    = (-1)^{k} k!\, \sigma^{2(k-1)} \,\frac12 \int h'^{\,2} h^{k-2}.
  \end{equation}
  The general derivative $[D^{k}_{1/\rho} F(1/\rho)]\big\vert_{\rho=\sigma^{2}}(h_{1},\ldots,h_{k})$ follows by polarisation of this identity and yields the first summand on the right-hand side \eqref{eq:40}.
\end{proof}

\appendix

\section{Disintegration of measures}
\label{sec:disintegration}

In this appendix we prove that for a given measure its disintegration into a continuous family of measures is unique. 
Although disintegration of measures is of course a classical topic, we were not able to locate a suitable result in the existing literature.

\begin{defn}\label{defMeasureCont}
Suppose $Y$ is a topological space.
We say that a family of Borel measures $\{\nu^y\}_{y\in Y}$ on a Polish space $X$ is continuous in $y$ if there exists a family $\left\{U_a\right\}_{a\in A}$ of open subsets of $X$ such that 
\begin{enumerate}
\item $\cup_{a\in A} U_a = X$;
\item For any $a\in A$ there exists $M>0$ such that for any $y\in Y$ we have $\nu_{y}(U_a) <M$.
\item For any $a\in A$ and any bounded continuous $F: X \to [0, \infty)$, such that $F(x)=0$ for any $x\notin U_a$, we have that the map $y\mapsto \int F\d\nu^y$ is continuous in $y$.
\end{enumerate}
\end{defn}

\begin{rmrk}
Since $X$ is Polish, we can assume that $A=\N$ by selecting a countable subcover of $\left\{U_a\right\}_{a\in A}$.
Note also that the definition above immediately implies that the measures $\{\nu^y\}_{y\in Y}$ are locally finite, so they automatically are Radon measures (since any locally finite measure on a Polish space is Radon).
We also remark that for probability measures one can often take $U_a = X$.
\end{rmrk}

The continuity property defined in Definition~\ref{defMeasureCont} is preserved under many different operations on measures.
Below we show that we can add continuous densities to the measures.
\begin{prp}\label{prop_measure_density_contin}
Let $Y$ be a topological space.
Suppose a family of Borel measures $\{\nu^y\}_{y\in Y}$ on a Polish space $X$ is continuous in $y$.
If $F:X\to[0, \infty)$ is a continuous function, then the family of Borel measures $\{F \cdot \nu^y\}_{y\in Y}$ is also continuous in $y$.
\end{prp}

\begin{proof}
For $n\in \N$ and $a\in A$ denote $U_{a, n} = \{x\in U_a: \, F(x) < n\}$.
Then it is easy to check that the family of open sets $\left\{U_{a, n}\right\}_{a\in A, n\in \N}$ satisfies the properties stated in the Definition~\ref{defMeasureCont} applied to the family of measures $\{F \cdot \nu^y\}_{y\in Y}$.
\end{proof}

We prove the uniqueness of measure disintegration in the class of continuous measures.
\begin{prp} \label{prop:disintegration}
  Let $X$ and $Y$ be Polish spaces and $\pi: X\to Y$ be a continuous projection.
  Let also $\nu$ and $\lambda$ be Radon measures on $X$ and $Y$ respectively.
  
  Suppose a family $\{\nu^y\}_{y\in Y}$ of Borel measures on $X$ satisfies the following.
  \begin{enumerate}
  \item For any $y\in Y$ the measure is supported on $\pi^{-1}(y)$;
  \item We have $\d\nu(\cdot) = \int \d\nu^y(\cdot) \, \d \lambda(y)$;
  \item The family of measures $\{\nu^y\}_{y\in Y}$ is continues in $y$ in the sense of Definition~\ref{defMeasureCont}. 
  \end{enumerate}
  Then the decomposition $\{\nu^y\}$ is unique.
\end{prp}

\begin{proof}
Let $\{U_n\}_{n\in\N}$ be a countable subcover of $\left\{U_a\right\}_{a\in A}$ from the Definition~\ref{defMeasureCont}. 
Also let $H_n:X\to [0, \infty)$ be a continuous function such that for all $x \in U_n$ we have $0<H_n(x)\leq 1$ and for any $x\notin U_n$ we have $H_n(x)=0$ (for example we can take $H_n(x) = \min\{ d_X(x, X\backslash U_n), 1\}$, where $d_X$ is a metric which turns the Polish space $X$ into a complete metric space).

Let $\delta_\eps$ be a continuous approximation of the Dirac delta function at $y_0$ as $\eps\to 0$ with respect to $\d\lambda$, i.e. for any continuous $g:Y\to \R$,
\begin{equation}
\lim_{\eps\to 0} \int_Y \delta_\eps(y) g(y) \d\lambda(y) = g(y_0).
\end{equation}
For example, we can take 
\begin{equation}
\delta_\eps(y) 
= 
\frac{\widetilde{\delta}_\eps(y)}{\int_Y \widetilde{\delta}_\eps(z)\d \lambda(z)},
\end{equation}
where 
$\widetilde{\delta}_\eps(y) = \max\left\{\eps - d_Y(y, y_0), 0\right\}$
with $d_Y$ being a metric which turns the Polish space $Y$ into a complete metric space.

Then consider a function $x\in X \mapsto \delta_{\eps}(\pi(x)) H_n(x) F(x)$ which is continuous on $X$ and vanishes outside $U_n$,
\begin{equation}
  \label{eq:60}
  \begin{aligned}
    \int \delta_{\epsilon}(\pi(x)) H_n(x) F(x) \d\nu(x)
    &= \int \d \lambda(y) \int \d\nu^{y}(x) \delta_{\epsilon}(\pi(x)) H_n(x) F(x)\\
    &= \int \d \lambda( y) \, \delta_{\epsilon}(y) \int \d\nu^{y}(x) H_n(x) F(x)
      \to  \int \d\nu^{y_{0}} H_n(x) F(x),
  \end{aligned}
\end{equation}
where in the second line we used the domain assumption $\pi(x) = y$ under $\d\nu^{y}$ and the fact that $y\mapsto \int H_n F\d\nu^y$ is continuous in $y$.

Suppose $\{\tilde{\nu}^y\}_{y\in Y}$ is another decomposition as in the statement of the proposition. 
Fix $y_0\in Y$.
Then for any continuous non-negative $F$ and any $n\in \N$ we have
\begin{equation}
\int \d\nu^{y_{0}} H_n(x) F(x) 
=
\int \d\tilde{\nu}^{y_{0}} H_n(x) F(x)
\end{equation}
Using that Radon measures are determined by their integrals with respect to nonnegative continuous functions
we obtain that $ H_n\d{\nu}^{y_{0}} = H_n \d\tilde{\nu}^{y_{0}} $, and thus $\d{\nu}^{y_{0}}|_{U_n} =  \d\tilde{\nu}^{y_{0}}|_{U_n}$.
Since $\cup_{n\in \N}U_n = X$, we deduce that $\d{\nu}^{y_{0}} =  \d\tilde{\nu}^{y_{0}}$.
\end{proof}

\section{Quotients of measures: Proof of Proposition~\ref{propMeasureFactor}}\label{sec:quotient-measure}

Suppose $(X,d)$ is a complete separable metric space (not necessarily locally compact).
Suppose a locally compact Hausdorff group $G$ is acting properly and continuously on $X$ from the right.
Then $X/G$ is a Polish space%
\footnote{Separability is clear.
  Using continuity of the projection $\pi\colon X\to X/G$ and paracompactness of $X$, one may check that $X/G$ is paracompact.
  Since $G$ acts properly, it also is Hausdorff.
  As a consequence, it is normal (Theorem of Jean Dieudonné).
  Moreover, since $\pi$ is an open map, $X/G$ is second countable and Urysohn's metrisation theorem implies that it is metrisable.
  Finally, complete metrisability follows from \citep{michael1986note}.}.
Let $\nu$ denote a left-invariant Haar measure on $G$ and $\Delta_{G}$ denote the modular function, such that $\nu(\cdot g) = \Delta_{G}(g) \nu(\cdot)$.
Write $\pi\colon X \to X/G$ for the canonical projection.
Write $C_{b}(X)$ for the space of continuous bounded functions equipped with the compact-open topology.
Denote by $C_{b}^{G-\mathrm{inv}}(X)$ the subspace of $G$-invariant functions equipped with the subspace topology.
Observe that $\pi^{\ast}\colon C_{b}(X/G) \to C_{b}^{G-\mathrm{inv}}(X), h\mapsto h\circ\pi$ is a bijection.

We say that a set $A \subseteq X$ is \emph{$G$-(pre)compact} if $G_{A} \coloneqq \{g \in G\colon A \cap Ag \neq \emptyset\}$ is (pre)compact in $G$.
  Furthermore, we say that a set $A$ is \emph{$G$-tempered} if it is $G$-precompact and moreover has a $G$-precompact open neighbourhood $U \supseteq A$, such that $UG \supseteq \mathrm{Cl}(A G)$.
Write $C_{b}^{G-\mathrm{temp}}(X)$ for the space of bounded continuous functions whose support is $G$-tempered (note that this space is not necessarily linear).
For $f \in C_{b}^{G-\mathrm{temp}}(X)$ write
\begin{equation}
  \label{eq:3}
  f^{\flat}(x) = \int_{G}f(xg)\d\nu(g).
\end{equation}
Note that $f^{\flat}$ is $G$-invariant (i.e.\ $f^{\flat}(\cdot g) = f^{\flat}$) and satisfies $[f(\cdot g)]^{\flat} = \Delta_{G}(g)^{-1} f^{\flat}$, for $g \in G$.
The main goal of this appendix is to prove the following

\begin{prp}\label{prop:existence-quotient-measure}
  Suppose $\mu$ is a Radon measure on $X$ such that $\mu(\cdot\, g) = \Delta_{G}(g)\, \mu$ for $g\in G$.
  Then there exists a unique Radon measure $\lambda$ on $X/G$ such that
  \begin{equation}
    \label{eq:2}
    \int_{X/G}f^{\flat}(x)\d\lambda(xG) = \int_{X}f(x)\d\mu(x),
  \end{equation}
  for any $f \in C_{b}^{G-\mathrm{temp}}(X)$. This extends to all non-negative $f\in C(X)$.
\end{prp}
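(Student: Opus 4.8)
The plan is to follow the classical construction of quotient measures for proper actions of locally compact groups (as in Bourbaki's \emph{Int\'egration} or Reiter--Stegeman), the only new ingredient being two substitutes for the local compactness of $X$ used there: since $G$ acts properly, every point of $X$ has a $G$-tempered open neighbourhood whose closure is again $G$-tempered, so $X$ has a basis of $G$-tempered open sets; and since $X$ is metric it is paracompact, so $X$ and $X/G$ (which is Polish, as recorded above) admit partitions of unity subordinate to any open cover. The whole argument then reduces to the construction of a \emph{Bruhat function}: a continuous $\beta\colon X\to[0,\infty)$ with
\begin{equation}
  \beta^{\flat}(x)=\int_{G}\beta(xg)\,\d\nu(g)=1 \qquad\text{for all }x\in X,
\end{equation}
and such that $\mathrm{supp}(\beta)\cap\pi^{-1}(C)$ is relatively compact in $X$ for every compact $C\subseteq X/G$. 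Given such a $\beta$, one \emph{defines} $\lambda$ by $\lambda(E):=\int_{\pi^{-1}(E)}\beta\,\d\mu$ for Borel $E\subseteq X/G$, and everything else follows from Tonelli's theorem.

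To construct $\beta$: after checking (from the definition of properness) that closed subsets and sufficiently small closed neighbourhoods of $G$-precompact sets stay $G$-precompact/$G$-tempered, cover $X$ by $G$-tempered open sets $W_{\alpha}$ with $\mathrm{Cl}(W_{\alpha})$ again $G$-tempered; since $\pi$ is open, $\{\pi(W_{\alpha})\}$ is an open cover of $X/G$. Choose, by paracompactness, a locally finite open cover $\{U_{i}\}$ of $X/G$ refining it and shrunk so that $\mathrm{Cl}(U_{i})\subseteq\pi(W_{\alpha(i)})$; set $A_{i}:=\mathrm{Cl}(W_{\alpha(i)})$ (so $\pi(A_{i})\supseteq\mathrm{Cl}(U_{i})$) and pick $\chi_{i}\in C_{b}^{G-\mathrm{temp}}(X)$ with $0\le\chi_{i}\le1$ and $\chi_{i}\equiv1$ on $A_{i}$ (Urysohn, with $\mathrm{supp}(\chi_{i})$ inside a $G$-tempered neighbourhood of $A_{i}$). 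The integral defining $\chi_{i}^{\flat}$ runs over a relatively compact subset of $G$ by temperedness, so $\chi_{i}^{\flat}$ is bounded, continuous and $G$-invariant, and since $\chi_{i}$ is non-negative, continuous and equals $1$ at each point of $A_{i}$, one has $\chi_{i}^{\flat}>0$ on $\pi^{-1}(U_{i})$. Taking a partition of unity $\{\eta_{i}\}$ on $X/G$ subordinate to $\{U_{i}\}$, set
\begin{equation}
  \beta(x):=\sum_{i}\eta_{i}(\pi(x))\,\frac{\chi_{i}(x)}{\chi_{i}^{\flat}(x)},
\end{equation}
with the convention that the $i$-th summand is $0$ where $\eta_{i}(\pi(x))=0$ (elsewhere on its support $\chi_{i}^{\flat}>0$). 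Local finiteness makes this a locally finite sum of continuous functions, so $\beta\in C(X)$, $\beta\ge0$, and $\beta^{\flat}=\sum_{i}\eta_{i}\circ\pi\equiv1$; and over a compact $C\subseteq X/G$ only finitely many $\eta_{i}$ are nonzero, while each $\mathrm{supp}(\chi_{i})\cap\pi^{-1}(C)$ is relatively compact because $\mathrm{supp}(\chi_{i})$ is $G$-precompact and the restriction of $\pi$ to a $G$-precompact set is a proper map (a direct consequence of properness of the action).

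Granting $\beta$, the measure $\lambda(E)=\int_{\pi^{-1}(E)}\beta\,\d\mu$ is $\sigma$-additive by monotone convergence, is finite on compacts since $\lambda(C)\le\|\beta\|_{\infty}\,\mu(\mathrm{supp}(\beta)\cap\pi^{-1}(C))<\infty$ (using $\mu$ Radon), and inherits inner regularity from $\mu$ via continuity of $\pi$, hence is Radon. For $f\in C(X)$ with $f\ge0$, let $\bar f\colon X/G\to[0,\infty]$ be the descent of the $G$-invariant function $f^{\flat}$, so $\bar f\circ\pi=f^{\flat}$; then
\begin{equation}
  \begin{aligned}
    \int_{X/G}\bar f\,\d\lambda
    &=\int_{X}f^{\flat}\,\beta\,\d\mu
    =\int_{G}\!\int_{X}f(xg)\,\beta(x)\,\d\mu(x)\,\d\nu(g)\\
    &=\int_{X}f(x)\Big(\int_{G}\Delta_{G}(g)^{-1}\beta(xg^{-1})\,\d\nu(g)\Big)\d\mu(x)
    =\int_{X}f\,\d\mu,
  \end{aligned}
\end{equation}
where the second equality is Tonelli, the third is the substitution $x\mapsto xg^{-1}$ using $\mu(\cdot\, g)=\Delta_{G}(g)\mu$ followed by Tonelli, and the last uses the inversion formula $\int_{G}\phi(g^{-1})\,\d\nu(g)=\int_{G}\phi(g)\Delta_{G}(g)^{-1}\,\d\nu(g)$ for the left Haar measure together with $\beta^{\flat}\equiv1$. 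Specialising to $f\in C_{b}^{G-\mathrm{temp}}(X)$ with $f\ge0$ gives \eqref{eq:2} (here $\bar f$ is the bounded continuous descent $(\pi^{\ast})^{-1}f^{\flat}$), and the general $f\in C_{b}^{G-\mathrm{temp}}(X)$ follows by writing $f=f^{+}-f^{-}$ (both in $C_{b}^{G-\mathrm{temp}}(X)$, their supports lying in $\mathrm{supp}(f)$).

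For uniqueness, if $\lambda'$ is another Radon measure satisfying \eqref{eq:2}, apply the identity just proved to the non-negative function $f=(h\circ\pi)\,\beta$ for $h\in C_{b}(X/G)$, $h\ge0$ (note $f^{\flat}=h\circ\pi$ since $\beta^{\flat}\equiv1$): this gives $\int_{X/G}h\,\d\lambda=\int_{X}(h\circ\pi)\,\beta\,\d\mu=\int_{X/G}h\,\d\lambda'$, and approximating $\indic_{O}$ from below by such $h$ for open $O$, followed by Dynkin's lemma, forces $\lambda=\lambda'$. The main obstacle I anticipate is the construction of the Bruhat function, and more precisely the point-set bookkeeping underpinning it: showing that $G$-temperedness/$G$-precompactness is inherited by the closures, subsets and neighbourhoods used above, that the restriction of $\pi$ to a $G$-precompact set is proper (which is what makes $\mathrm{supp}(\chi_{i})\cap\pi^{-1}(C)$ relatively compact and $\lambda$ locally finite), and that $\chi_{i}^{\flat}$ and the sum defining $\beta$ are continuous — all of which require carefully unwinding the Bourbaki definition of a proper group action.
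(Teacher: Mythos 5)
Your overall route is the classical Bruhat--Weil one: build $\beta\ge 0$ with $\beta^{\flat}\equiv 1$ and support ``proper over compacts'' of $X/G$, set $\lambda=\pi_{\ast}(\beta\,\mu)$, and verify \eqref{eq:2} by Tonelli plus the modular bookkeeping. This is close in substance to the paper, whose functions $u_{i}$ satisfy $\sum_{i}u_{i}^{\flat}=1$ (so $\sum_{i}u_{i}$ is precisely a Bruhat function), the paper then invoking a Riesz--Markov--Kakutani theorem piecewise instead of your direct definition of $\lambda$. However, there is a genuine gap at exactly the step you flag as the crux. The claim that ``the restriction of $\pi$ to a $G$-precompact set is a proper map, a direct consequence of properness of the action'' is false, and your proofs of local finiteness (hence the Radon property) of $\lambda$ rest on it. Counterexample: let $X=\{(y,c):y\in\R,\ c\in\{0\}\cup\{1/n:n\ge 1\}\}\subseteq\R^{2}$ with $G=\R$ acting properly and continuously by $(y,c)\cdot t=(y+t,c)$; then $X/G\cong\{0\}\cup\{1/n:n\ge1\}$ is compact, and $A=\{(-n,1/n):n\ge1\}$ is closed with $G_{A}=\{0\}$, hence $G$-precompact, yet $A\cap\pi^{-1}(X/G)=A$ is an infinite closed discrete set, not relatively compact (note this $X$ is even locally compact, so the failure is not about local compactness of $X$ but about precompact versus tempered: no $G$-precompact open $U\supseteq A$ can satisfy $UG\supseteq\mathrm{Cl}(AG)$, since such a $U$ would have to meet the orbit $\R\times\{0\}$ and would then have unbounded $G_{U}$). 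The statement you actually need, and which the paper's notion of $G$-tempered is designed for, is: if $S$ is closed and $G$-tempered with witness $U$, and $C\subseteq X/G$ is compact, then $S\cap\pi^{-1}(C)$ is relatively compact. Its proof is not a one-liner: for $x_{n}\in S\cap\pi^{-1}(C)$ pass to $\pi(x_{n})\to\pi(\bar x)$, use openness of $\pi$ to produce $g_{n}$ with $x_{n}g_{n}\to\bar x\in\mathrm{Cl}(SG)\subseteq UG$, write $\bar x=u\bar g$ with $u\in U$, observe $x_{n}\in U$ and $x_{n}g_{n}\bar g^{-1}\in U$ eventually, so $g_{n}\bar g^{-1}\in G_{U}$, and extract convergent subsequences of $g_{n}$ and then of $x_{n}$. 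Since your $\chi_{i}$ do have $G$-tempered (not merely $G$-precompact) supports, the construction is repairable by proving and invoking this lemma, together with the routine checks that closed subsets of $G$-tempered sets are $G$-tempered; but as written the justification is wrong precisely at the point where the non-locally-compact setting departs from Bourbaki.

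Two further repairs are needed. First, $\beta$ need not be globally bounded: when $X/G$ is not locally compact, $\chi_{i}^{\flat}$ has no positive lower bound on $\pi^{-1}(\mathrm{spt}(\eta_{i}))$, so replace $\|\beta\|_{\infty}$ by a bound for $\beta$ on $\pi^{-1}(C)$, which does hold because only finitely many $i$ contribute over the compact $C$ and the functions on $X/G$ induced by the $\chi_{i}^{\flat}$ are continuous and positive there. Second, your uniqueness argument applies \eqref{eq:2} for the competitor $\lambda'$ to $f=(h\circ\pi)\beta$, but this $f$ is in general neither bounded nor supported in a $G$-tempered set, so it is not an admissible test function; you must localise, e.g.\ test with $((h\,\eta_{i})\circ\pi)\,\chi_{i}$, whose $\flat$ is $((h\,\eta_{i})\circ\pi)\,\chi_{i}^{\flat}$, to get $\int h\,\eta_{i}\,c_{i}\,\d\lambda=\int h\,\eta_{i}\,c_{i}\,\d\lambda'$ for all $h\in C_{b}(X/G)$, where $c_{i}$ denotes the function on $X/G$ induced by $\chi_{i}^{\flat}$; since $c_{i}>0$ on $U_{i}\supseteq\mathrm{spt}(\eta_{i})$, this gives $\lambda=\lambda'$ on each $\{\eta_{i}>0\}$ and hence everywhere --- which is in essence the paper's piecewise identification $\lambda\cdot u_{i}^{\flat}=\lambda_{i}$.
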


The modular function makes an appearance in our condition for $G$-covariance of $\mu$, since we are working with a left-Haar measure $\nu$, but a right-action on $X$.
Requiring that $\mu(\cdot g) = \Delta_{G}(g)\, \mu$, tells us that the measure transforms like a left-Haar measure under the right action.
Also, note that in our application, the group $G=\SL(2,\R)$ is unimodular, hence $\Delta_{G} \equiv 1$, and the requirement simply reduces to $G$-invariance of $\mu$.

\medskip

We follow \cite[Section~2]{bourbaki_integration_2004}, with appropriate modifications to allow for $X$ not locally compact. %

\begin{lmm}
  For $f \in C_{b}^{G-\mathrm{temp}}(X)$, the function $f^{\flat}$ is well-defined as an element of $C_{b}^{G-\mathrm{inv}}(X)$.
\end{lmm}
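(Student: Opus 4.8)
The plan is to verify, in order, that $f^{\flat}$ is (i) finite pointwise, (ii) bounded, (iii) $G$-invariant, and (iv) continuous, the last being the only substantive point. Throughout write $A=\mathrm{supp}\,f$, and let $U\supseteq A$ be the $G$-precompact open neighbourhood supplied by the definition of $G$-temperedness, so that $UG\supseteq\mathrm{Cl}(AG)$ and $G_{U}$ is precompact (one checks directly that $G_{U}$ is symmetric, using that $h\in G_{U}$ iff $U\cap Uh\neq\emptyset$).

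\textbf{Finiteness and boundedness.} For fixed $x$, the integrand $g\mapsto f(xg)$ vanishes off $\{g:xg\in A\}$; if this set is nonempty, choosing some $g_{1}$ in it and writing $xg=(xg_{1})(g_{1}^{-1}g)$ shows it is contained in $g_{1}G_{A}$, which is precompact, so the integral converges absolutely. If moreover $x\in\mathrm{Cl}(AG)\subseteq UG$, I would write $x=ug_{0}$ with $u\in U$; then $xg\in A\subseteq U$ forces $u(g_{0}g)\in U$, hence $g_{0}g\in G_{U}$ and $g\in g_{0}^{-1}G_{U}$, so by left-invariance of $\nu$
\[
  |f^{\flat}(x)|\le\|f\|_{\infty}\,\nu\big(g_{0}^{-1}\overline{G_{U}}\big)=\|f\|_{\infty}\,\nu\big(\overline{G_{U}}\big)<\infty,
\]
a bound uniform in $x$ since $\overline{G_{U}}$ is compact and $\nu$ is finite on compacts; and $f^{\flat}(x)=0$ whenever $x\notin\mathrm{Cl}(AG)$. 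For invariance, substituting $g\mapsto hg$ and using left-invariance of $\nu$ gives $f^{\flat}(xh)=\int_{G}f(x(hg))\,\d\nu(g)=f^{\flat}(x)$, and the relation $[f(\cdot\,h)]^{\flat}=\Delta_{G}(h)^{-1}f^{\flat}$ comes the same way from $\nu(\cdot\,h)=\Delta_{G}(h)\nu$.

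\textbf{Continuity, which is the main obstacle.} Fix $x_{0}$. If $x_{0}\notin\mathrm{Cl}(AG)$, the (open) complement of $\mathrm{Cl}(AG)$ is a neighbourhood of $x_{0}$ on which $f^{\flat}\equiv0$, so the interesting case is $x_{0}\in\mathrm{Cl}(AG)\subseteq UG$; write $x_{0}=u_{0}g_{0}$ with $u_{0}\in U$, and set $V:=Ug_{0}$, an open neighbourhood of $x_{0}$. The key observation is that for every $x=ug_{0}\in V$ the condition $xg\in A$ forces $g\in g_{0}^{-1}G_{U}$ by the computation above, so all the functions $\{g\mapsto f(xg):x\in V\}$ are supported in the single compact set $K:=g_{0}^{-1}\overline{G_{U}}$ and are dominated by the $\nu$-integrable function $\|f\|_{\infty}\indic_{K}$. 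Since the action $X\times G\to X$ and $f$ are continuous, $x\mapsto f(xg)$ is continuous for each $g$, and dominated convergence then yields continuity of $f^{\flat}$ at $x_{0}$. This is where the whole weight of the argument lies: because $X$ is not assumed locally compact, one cannot appeal to compact support in $X$, and the $G$-tempered hypothesis — concretely, the $G$-precompact open neighbourhood $U$ with $UG\supseteq\mathrm{Cl}(AG)$ — is exactly what manufactures the locally uniform, $\nu$-integrable domination needed to pass to the limit under the integral sign. The remaining points (finiteness, boundedness, invariance) are then routine bookkeeping with Haar measure.
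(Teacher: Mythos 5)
Your proof is correct and follows essentially the same route as the paper: boundedness via containment of $\{g\colon xg\in\mathrm{spt}(f)\}$ in a translate of the precompact set $G_{\mathrm{spt}(f)}$, invariance from left-invariance of $\nu$, and continuity by using the tempered neighbourhood $U$ to confine the integrand, uniformly for $x$ near $x_{0}$, to the compact set $g_{0}^{-1}\,\mathrm{Cl}(G_{U})$ and then applying dominated convergence. The only differences (working on the neighbourhood $V=Ug_{0}$ instead of along a fixed sequence, and splitting the trivial case by $\mathrm{Cl}(\mathrm{spt}(f)G)$ rather than $UG$) are cosmetic.
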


\begin{proof}
  Obviously, we have $|f(xg)| \leq \|f\|_{\infty} \mathbf{1}_{\mathrm{spt}(g\mapsto f(xg))}$.
  Moreover, note that $\mathrm{spt}(g\mapsto f(xg))$ is contained in a $G$-translate of $G_{\mathrm{spt}{f}}$, hence $f^{\flat}(x) \leq \|f\|_{\infty} \nu(\mathrm{spt}(g\mapsto f(xg))) \leq \|f\|_{\infty} \nu(G_{\mathrm{spt}{f}})$.
  Consequently, $f^{\flat}$ is well-defined and uniformly bounded.
  $G$-invariance is clear from invariance of the Haar-measure $\nu$.
  Regarding continuity, suppose $x_{n} \to x$ in $X$.
  Write $\mathcal{X} \coloneqq \{x\} \cup \{x_{n}\}_{n\in \N}$.
  Let $U \supseteq \mathrm{spt}(f)$ denote a $G$-precompact open neighbourhood of $\mathrm{spt}(f)$ such that $UG \supseteq \mathrm{Cl}(\mathrm{spt}(f)G)$.
  We may assume $x \subseteq UG$.
  In fact, otherwise $f^{\flat}(x_{n}) = 0 = f^{\flat}(x)$ for sufficiently large $n$ and continuity is clear.
  Moreover, choosing $g_{0} \in G$ such that $x \in U g_{0}$, we can assume without loss of generality that $\mathcal{X} \subseteq U g_{0}$.
  Now consider
  \begin{equation}
    \label{eq:13}
    |f^{\flat}(x) - f^{\flat}(x_{n})|
    \leq
    \int_{G} |f(xg) - f(x_{n}g)| \d{\nu}(g).
  \end{equation}
  Since $f$ is bounded, it suffices to show that we can restrict the integral to a compact subset of $G$ (independently from $n$).
  Note for $g \in G$ we have $|f(xg) - f(x_{n}g)| \neq 0$ only if $U \cap \mathcal{X}g \neq \emptyset$.
  Since $\mathcal{X}g \subseteq Ug_{0}g$, the set of such $g$ is precompact.
  Consequently, the right-hand side of \eqref{eq:13} goes to zero as $n \to \infty$ by dominated convergence, proving continuity of $f^{\flat}$.
\end{proof}

\begin{lmm}\label{lem:g-precompact-neighbourhood}
  For any $x \in X$, there exists an open neighbourhood $U \ni x$ which is $G$-tempered.
\end{lmm}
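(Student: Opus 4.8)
The plan is to build $U$ in two stages: first invoke properness to get a \emph{thin} open neighbourhood of $x$, and then ``thicken'' it — not inside $X$, which is not locally compact so that relatively compact neighbourhoods are unavailable, but downstairs in the metrizable quotient $X/G$, pulling the thickening back along $\pi$.

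First I would produce an open $W\ni x$ that is $G$-precompact, i.e.\ with $G_W=\{g\in G: Wg\cap W\neq\emptyset\}$ relatively compact in $G$. For $\sigma$-compact $G$ — in particular $G=\SL(2,\R)$ — this follows from properness by a standard contradiction. Using $\sigma$-compactness and local compactness, exhaust $G$ by an increasing sequence of compact sets $C_1\subseteq C_2\subseteq\cdots$ with the property that every compact subset of $G$ lies in some $C_n$. If no ball $B(x,1/n)$ were thin, then $G_{B(x,1/n)}$ is not contained in $C_n$, so one may choose $g_n\in G_{B(x,1/n)}\setminus C_n$ together with $a_n\in B(x,1/n)$ satisfying $a_ng_n\in B(x,1/n)$; then $(a_n,a_ng_n)\to(x,x)$ in $X\times X$, so the preimage under the (proper) action map $(g,a)\mapsto(a,ag)$ of the compact set $\{(x,x)\}\cup\{(a_n,a_ng_n):n\in\N\}$ is compact, and its projection to $G$ is a compact set containing every $g_n$ — impossible, since no $C_n$ contains $g_n$. (For general $G$ one may instead cite Bourbaki's characterisation of proper actions.)

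Next I would thicken $W$ in the quotient. Recall that $\pi\colon X\to X/G$ is an open continuous surjection and that $X/G$ is metrizable; fix a metric on it. Then $\pi(W)$ is an open neighbourhood of $\pi(x)$, so one may pick $\eps>0$ with $\overline{B'}\subseteq B''\subseteq\pi(W)$, where $B'=B_{X/G}(\pi(x),\eps)$ and $B''=B_{X/G}(\pi(x),2\eps)$, and set
\begin{equation*}
  U\coloneqq W\cap\pi^{-1}(B'),\qquad U'\coloneqq W\cap\pi^{-1}(B'').
\end{equation*}
These are open neighbourhoods of $x$ with $U\subseteq U'\subseteq W$, so $G_U\subseteq G_{U'}\subseteq G_W$ is relatively compact and both $U,U'$ are $G$-precompact; and surjectivity of $\pi$ yields $\pi(U)=\pi(W)\cap B'$ and $\pi(U')=\pi(W)\cap B''=B''$.

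It then remains to verify the saturation condition $U'G\supseteq\mathrm{Cl}(UG)$. Since every $g$ acts as a homeomorphism of $X$, the set $UG=\pi^{-1}(\pi(U))$ is open and saturated and $\mathrm{Cl}(UG)$ is again saturated; because $\pi$ is a quotient map this forces $\mathrm{Cl}(UG)=\pi^{-1}(\overline{\pi(U)})$, while $U'G=\pi^{-1}(\pi(U'))$, so the required inclusion is equivalent to $\overline{\pi(U)}\subseteq\pi(U')$, which holds since $\overline{\pi(U)}\subseteq\overline{B'}\subseteq B''=\pi(U')$. Hence $U$ is $G$-precompact and carries the $G$-precompact open neighbourhood $U'$ with $U'G\supseteq\mathrm{Cl}(UG)$, i.e.\ $U$ is $G$-tempered. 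The main obstacle is precisely the failure of local compactness of $X$: in the locally compact case one simply takes a compact neighbourhood of $x$ and concludes directly from properness (a compact neighbourhood is automatically $G$-precompact, and the saturation of a compact set is closed), whereas here the thickening has to be carried out in the metrizable quotient and pulled back while staying inside the thin set $W$, so that $G$-precompactness is inherited for free.
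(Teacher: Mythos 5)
Your proof is correct and takes essentially the same route as the paper's: properness of the action gives a small $G$-precompact open neighbourhood (your exhaustion-of-$G$ argument just makes explicit a step the paper leaves implicit), and the tempered condition is then arranged by choosing a ball in the metrisable quotient $X/G$ whose closure stays inside the image of that thin set and pulling it back along $\pi$. The only cosmetic difference is that you work with two nested balls $B'\subseteq B''$ inside $\pi(W)$, whereas the paper uses a single ball $\mathcal{B}$ together with the thin neighbourhood itself as the thickening; the verification is the same.
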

\begin{proof}
  First we show that there exist arbitrarily small $G$-compact neighbourhoods:
  Consider a decreasing sequence of neighbourhoods $\{U_{n}\}_{n\in\N}$, such that $\bigcap_{n}U_{n} = \{x\}$.
  Assume that none of the $U_{n}$ is $G$-precompact, hence there exists a sequence $x_{n} \in U_{n}$ and $g_{n} \in G$, such that $x_{n}g_{n} \in U_{n}$, but with $\{g_{n}\}$ non-precompact.
  However, by definition of the $U_{n}$, we have $x_{n} \to x$ and $x_{n}g_{n} \to x$, and since $G$ acts properly on $X$ this implies that $g_{n}$ is precompact, leading to a contradiction.

  Now suppose $\tilde{U}$ is a $G$-compact neighbourhood of $x$.
  Then $\tilde{U}G$ is a neighbourhood of $xG$ in $X/G$.
  Since $X/G$ is metrisable, we may choose a sufficiently small ball $\mathcal{B}$ around $[x]$ in $X/G$, such that $\mathrm{Cl}(\mathcal{B}) \subseteq \tilde{U}G$.
  Set $U \coloneqq \tilde{U} \cap \pi^{-1}(\mathcal{B})$.
  This is a $G$-tempered neighbourhood of $x$.
\end{proof}

\begin{lmm}\label{lem:bump-function}
    For any $x_{0} \in X$ and $\epsilon > 0$ sufficiently small, there exists $0< \epsilon' < \epsilon$ and a non-negative function $v \in C^{G-\mathrm{temp}}_{b}(X)$ with $\mathrm{spt}(v) \subseteq B_{\epsilon}(x_{0})$, such that
  \begin{equation}
    \label{eq:14}
    0 < \inf_{x' \in B_{\epsilon'}(x_{0})} v^{\flat}(x') < \sup_{x' \in B_{\epsilon'}(x_{0})} v^{\flat}(x') < \infty.
  \end{equation}
\end{lmm}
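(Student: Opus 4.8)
The plan is to take for $v$ an explicit metric bump function centred at $x_{0}$, deduce both $v\in C_{b}^{G-\mathrm{temp}}(X)$ and finiteness of $v^{\flat}$ from Lemma~\ref{lem:g-precompact-neighbourhood} together with the lemma establishing that $f^{\flat}$ is a well-defined element of $C_{b}^{G-\mathrm{inv}}(X)$, obtain the strict positivity $v^{\flat}(x_{0})>0$ from positivity of a Haar measure on nonempty open sets, and finally propagate this lower bound to a small ball $B_{\epsilon'}(x_{0})$ by continuity of $v^{\flat}$. Concretely, the first move is to apply Lemma~\ref{lem:g-precompact-neighbourhood} to get a $G$-tempered open neighbourhood $U\ni x_{0}$, and to read "$\epsilon$ sufficiently small" as meaning $B_{\epsilon}(x_{0})\subseteq U$. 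For any $r\in(0,\epsilon)$ the closed ball $\overline{B_{r}(x_{0})}$ is then a closed subset of the $G$-precompact set $U$, hence itself $G$-precompact (since $G_{A'}\subseteq G_{A}$ whenever $A'\subseteq A$), and a $G$-precompact open neighbourhood of $U$ furnished by $G$-temperedness of $U$ simultaneously witnesses that $\overline{B_{r}(x_{0})}$ is $G$-tempered.

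Next I would fix $r\in(0,\epsilon)$ and set $v(x)=\bigl(1-\tfrac1r\,d(x,x_{0})\bigr)_{+}$, so that $v$ is bounded, continuous, non-negative, $v(x_{0})=1$, and $\mathrm{spt}(v)=\overline{B_{r}(x_{0})}\subseteq B_{\epsilon}(x_{0})$. By the previous step $\mathrm{spt}(v)$ is $G$-tempered, so $v\in C_{b}^{G-\mathrm{temp}}(X)$, and the lemma on $f^{\flat}$ gives $v^{\flat}\in C_{b}^{G-\mathrm{inv}}(X)$; in particular $\sup_{x'\in X}v^{\flat}(x')\le\|v^{\flat}\|_{\infty}<\infty$, which already supplies the rightmost inequality in \eqref{eq:14} for every choice of $\epsilon'$.

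For the lower bound, the map $g\mapsto v(x_{0}g)$ is continuous on $G$ and equals $1$ at $g=e$, so the set $N=\{g\in G:\ v(x_{0}g)>\tfrac12\}$ is open and nonempty; as $\nu$ is a Haar measure it assigns positive mass to $N$, and therefore
\[
  v^{\flat}(x_{0})=\int_{G}v(x_{0}g)\,\d\nu(g)\;\ge\;\tfrac12\,\nu(N)\;>\;0 .
\]
Since $v^{\flat}$ is continuous, $\{x':\ v^{\flat}(x')>\tfrac12 v^{\flat}(x_{0})\}$ is an open set containing $x_{0}$, so there is $\epsilon'\in(0,\epsilon)$ with $B_{\epsilon'}(x_{0})$ contained in it; then $\inf_{x'\in B_{\epsilon'}(x_{0})}v^{\flat}(x')\ge\tfrac12 v^{\flat}(x_{0})>0$, which together with the previous paragraph yields \eqref{eq:14} (the middle inequality holding in the weak sense; if strict separation is wanted one instead picks $\epsilon'$ so that $B_{\epsilon'}(x_{0})$ is not contained in a level set of $v^{\flat}$, which is possible because $v^{\flat}$ is non-constant, being positive at $x_{0}$ and vanishing outside the $G$-saturation of $\mathrm{spt}(v)$).

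I expect the only genuinely delicate point to be the interplay, in the first two steps, between the requirement $\mathrm{spt}(v)\subseteq B_{\epsilon}(x_{0})$ and the requirement that $\mathrm{spt}(v)$ be $G$-tempered: one cannot use an arbitrary small ball but must first shrink $\epsilon$ via Lemma~\ref{lem:g-precompact-neighbourhood} so that $B_{\epsilon}(x_{0})$ lies inside a $G$-tempered open set, and then check that closed subballs inherit $G$-temperedness, which relies on hereditariness of $G$-precompactness under subsets. Everything else — finiteness of $v^{\flat}$, strict positivity at $x_{0}$, and the passage to a smaller ball — follows directly from the lemma on $f^{\flat}$, positivity of Haar measure on nonempty open sets, and continuity of $v^{\flat}$.
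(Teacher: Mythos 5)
Your proof is correct and establishes exactly what the paper later uses (that $v^{\flat}$ is bounded away from $0$ and $\infty$ on a small ball $B_{\epsilon'}(x_{0})$), but the key step is handled by a different mechanism than in the paper. The paper reduces to the indicator $\mathbf{1}_{B_{\epsilon}(x_{0})}$ and proves the uniform bounds \eqref{eq:12} directly: for the lower bound it uses joint continuity of the action, so that the preimage of $B_{\epsilon}(x_{0})$ under $(x,g)\mapsto xg$ contains a product neighbourhood $B_{\epsilon'}(x_{0})\times U$ of $(x_{0},\mathrm{id})$, giving $\mathbf{1}^{\flat}_{B_{\epsilon}(x_{0})}\geq \nu(U)>0$ uniformly on $B_{\epsilon'}(x_{0})$; the tent function is then sandwiched between indicators at radii $\epsilon/2$ and $\epsilon$. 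You instead work with the tent function from the start: positivity of Haar measure on the nonempty open set $\{g\colon v(x_{0}g)>\tfrac12\}$ gives the pointwise bound $v^{\flat}(x_{0})>0$, and you propagate it to a ball via the continuity of $v^{\flat}$ furnished by the earlier lemma on $f\mapsto f^{\flat}$ --- which is legitimate because you verified that $\mathrm{spt}(v)=\mathrm{Cl}(B_{r}(x_{0}))$ is $G$-tempered, by heredity of $G$-precompactness and temperedness under subsets of the neighbourhood from Lemma~\ref{lem:g-precompact-neighbourhood} (the same observation the paper makes implicitly when it assumes $B_{\epsilon}(x_{0})$ tempered). The upper bounds are the same in substance (finiteness of $\nu$ on the precompact set $G_{\mathrm{spt}(v)}$). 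Your route is shorter because it reuses the already-proved continuity of $f^{\flat}$; the paper's product-neighbourhood argument obtains uniformity without invoking that continuity, which is anyway unavailable for the indicator.

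One caveat: like the paper's own proof, you only get $0<\inf\leq\sup<\infty$, not the strict middle inequality literally written in \eqref{eq:14}. That is immaterial, since only the outer bounds are used in Lemma~\ref{lem:partition-of-unity}, but your parenthetical fix does not work in general: $v^{\flat}$ can be constant near $x_{0}$ (for instance if $X=G$ acts on itself by translation, every $f^{\flat}$ is globally constant), so strict separation can genuinely fail and the middle inequality should simply be read as non-strict.
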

\begin{proof}
  It suffices to show the following:
  for any $x_{0} \in X$ and $\epsilon > 0$ sufficiently small, there exists $0 < \epsilon' < \epsilon$, such that
  \begin{equation}
    \label{eq:12}
    0 < \inf_{x'\in B_{\epsilon'}(x_{0})} (\mathbf{1}_{B_{\epsilon}(x_{0})}^{\flat})(x') < \sup_{x'\in B_{\epsilon'}(x_{0})} (\mathbf{1}_{B_{\epsilon}(x_{0})}^{\flat})(x') < \infty,
  \end{equation}
  where we note that $(\mathbf{1}_{B_{\epsilon}(x_{0})}^{\flat})(x') = \nu(\{g\in G\colon x'g \cap B_{\epsilon}(x_{0}) \neq \emptyset\})$.
  Indeed if above holds then we can choose $v = \max(0, 1-\tfrac{4}{3\epsilon}\mathrm{dist}(x, B_{\epsilon/2}(x_{0})))$.
  Since $\tfrac13 \mathbf{1}_{B_{\epsilon/2}} \leq v\leq \mathbf{1}_{B_{\epsilon}}$, for small enough $\epsilon > 0$ the claim in \eqref{eq:14} follows.

  Now we address the claim \eqref{eq:12}.
  By continuity of the group action $X\times G \to X, (x,g) \mapsto xg$, the preimage of $B_{\epsilon}(x_{0})$ under this map is an open neighbourhood of $(x_{0}, \mathrm{id})$.
  In particular, it contains a neighbourhood of the form $B_{\epsilon'}(x_{0}) \times U$, with $U \subseteq G$ a neighbourhood of $\mathrm{id}$.
  In particular, for $x' \in B_{\epsilon'}(x_{0})$ we have $U \subseteq \{g\in G\colon x'g \cap B_{\epsilon}(x_{0}) \neq \emptyset\}$ and hence $(\mathbf{1}_{B_{\epsilon}(x_{0})}^{\flat})(x') \geq \nu(U) > 0$.
  This proves the lower bound in \eqref{eq:12}.
  For the upper bound note that we may assume $\epsilon > 0$ sufficiently small, such that $B_{\epsilon}(x_{0})$ is $G$-tempered.
  As a consequence $\{g\in G\colon x'g \cap B_{\epsilon}(x_{0}) \neq \emptyset\} \subseteq G_{B_{\epsilon}(x_{0})}$ for $x' \in B_{\epsilon}(x_{0})$, so $(\mathbf{1}_{B_{\epsilon}(x_{0})}^{\flat})(x') \leq \nu(G_{B_{\epsilon}(x_{0})}) < \infty$, which proves the upper bound in \eqref{eq:12}.
\end{proof}

\begin{lmm}\label{lem:partition-of-unity}
  Consider above setting and consider a Radon measure $\mu$ on $X$.
  There exists a countable family of non-negative functions $u_{i} \in C_{b}^{G-\mathrm{temp}}(X)$, $i \in \N$, with bounded support, such that $\mu(u_{i}) < \infty$ and such that $\{u_{i}^{\flat}\}$ is a partition of unity.

  Moreover, writing $X_{i} \coloneqq \mathrm{spt}(u_{i})\cdot G$, the map $f\mapsto f^{\flat}$ is surjective from $C^{G-\mathrm{temp}}(X_{i})$ onto $C^{G-\mathrm{inv}}(X_{i})$.
\end{lmm}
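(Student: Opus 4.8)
The plan is to manufacture the $u_i$ out of the ``bump functions'' produced by Lemma~\ref{lem:bump-function}, transported across the quotient by a partition of unity on $X/G$. First I would fix, for each $x\in X$, a radius $\epsilon_x>0$ small enough that three things hold at once: $B_{\epsilon_x}(x)$ is $G$-tempered (possible by Lemma~\ref{lem:g-precompact-neighbourhood}, since a sufficiently small ball contained in a $G$-tempered neighbourhood of $x$ is again $G$-tempered); $\mu\big(B_{\epsilon_x}(x)\big)<\infty$ (using local finiteness of the Radon measure $\mu$); and Lemma~\ref{lem:bump-function} applies at $x$, yielding $0<\epsilon_x'<\epsilon_x$, a constant $c_x>0$ and a non-negative $v_x\in C_b^{G-\mathrm{temp}}(X)$ with $\mathrm{spt}(v_x)\subseteq B_{\epsilon_x}(x)$ and $v_x^{\flat}\ge c_x>0$ on $B_{\epsilon_x'}(x)$. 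The sets $\pi\big(B_{\epsilon_x'}(x)\big)$, $x\in X$, form an open cover of the Polish (hence Lindel\"of) space $X/G$, so I extract a countable subcover, writing $v_i$, $V_i=B_{\epsilon_i'}(x_i)$, $W_i=B_{\epsilon_i}(x_i)$, $c_i$ for the data of the $i$-th member. Since $v_i^{\flat}$ is continuous and $G$-invariant it descends to $\bar v_i\in C(X/G)$ with $\bar v_i\circ\pi=v_i^{\flat}$ and $\bar v_i\ge c_i>0$ on $\pi(V_i)$.

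Next I would invoke that $X/G$, being metrizable, is paracompact: there is a partition of unity $\{\psi_i\}_{i\in\N}$ subordinate to the countable cover $\{\pi(V_i)\}$, i.e.\ each $\psi_i\in C(X/G)$ is non-negative with $\mathrm{spt}(\psi_i)\subseteq\pi(V_i)$, the family $\{\psi_i\}$ is locally finite, and $\sum_i\psi_i\equiv1$. On $\pi(V_i)$ put $g_i:=\psi_i/\bar v_i$ and extend $g_i$ by $0$ off $\pi(V_i)$; since $\bar v_i\ge c_i>0$ on $\pi(V_i)\supseteq\mathrm{spt}(\psi_i)$ and $\psi_i$ vanishes off $\pi(V_i)$, this $g_i$ is a well-defined non-negative element of $C(X/G)$ with $\mathrm{spt}(g_i)=\mathrm{spt}(\psi_i)$, with $0\le g_i\le 1/c_i$, and with $g_i\bar v_i=\psi_i$ on all of $X/G$. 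Now set $u_i:=v_i\cdot(g_i\circ\pi)$. Then $u_i\ge0$ is continuous and bounded, $\mathrm{spt}(u_i)\subseteq\mathrm{spt}(v_i)\subseteq W_i$; since a closed subset of the $G$-tempered set $\mathrm{spt}(v_i)$ is again $G$-tempered, $u_i\in C_b^{G-\mathrm{temp}}(X)$ with bounded support, and $\mu(u_i)\le\|u_i\|_\infty\,\mu(W_i)<\infty$. Because $g_i\circ\pi$ is $G$-invariant it factors out of the integral defining $\flat$, so $u_i^{\flat}=(g_i\circ\pi)\,v_i^{\flat}=\big(g_i\bar v_i\big)\circ\pi=\psi_i\circ\pi$. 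Consequently $\{u_i^{\flat}\}=\{\psi_i\circ\pi\}$ is locally finite (the preimage under the continuous map $\pi$ of a locally finite family) and $\sum_i u_i^{\flat}=\big(\sum_i\psi_i\big)\circ\pi\equiv1$, which is precisely the assertion that $\{u_i^{\flat}\}$ is a partition of unity.

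For the ``moreover'' part I would observe that $X_i=\mathrm{spt}(u_i)\cdot G$ satisfies $\pi(X_i)=\pi\big(\mathrm{spt}(u_i)\big)\subseteq\mathrm{spt}(\psi_i)\subseteq\pi(V_i)$, hence $X_i\subseteq\pi^{-1}\big(\pi(V_i)\big)$ and therefore $v_i^{\flat}\ge c_i>0$ everywhere on $X_i$. Given $\varphi\in C^{G-\mathrm{inv}}(X_i)$, set $f:=(v_i|_{X_i})\cdot\big(\varphi/v_i^{\flat}\big)$; the denominator being bounded below on $X_i$, $f$ is continuous on $X_i$, with $\mathrm{spt}(f)\subseteq\mathrm{spt}(v_i)\cap X_i$, which is $G$-tempered, so $f\in C^{G-\mathrm{temp}}(X_i)$. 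Since $\varphi/v_i^{\flat}$ is $G$-invariant it factors out of the defining integral, giving $f^{\flat}=\big(\varphi/v_i^{\flat}\big)\,v_i^{\flat}=\varphi$ on $X_i$; thus $f\mapsto f^{\flat}$ maps $C^{G-\mathrm{temp}}(X_i)$ onto $C^{G-\mathrm{inv}}(X_i)$.

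The step I expect to be the main obstacle is not any single computation but the topological bookkeeping around a possibly non-locally-compact quotient: one must be sure that $X/G$, though merely Polish, is Lindel\"of and paracompact so that the countable, locally finite, subordinate partition of unity $\{\psi_i\}$ exists, and one must check at each stage that support and $G$-temperedness are inherited (closed subsets of $G$-tempered sets being $G$-tempered, supports of products shrinking, preimages of locally finite families staying locally finite), so that the constructed functions genuinely lie in $C_b^{G-\mathrm{temp}}(X)$ respectively $C^{G-\mathrm{temp}}(X_i)$. The quantitative lower bound $v_i^{\flat}\ge c_i>0$ on $V_i$ furnished by Lemma~\ref{lem:bump-function} is exactly what legitimises the divisions by $\bar v_i$ and by $v_i^{\flat}$ on which the whole construction rests.
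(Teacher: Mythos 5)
Your proposal is correct and follows essentially the same route as the paper: both build $u_i = \frac{v_i}{v_i^\flat}\,\pi^{\ast}\bar u_i$ from the bump functions of Lemma~\ref{lem:bump-function} (with $G$-tempered balls of finite $\mu$-measure) and a partition of unity on the Polish, hence paracompact, quotient $X/G$, so that $u_i^\flat$ is exactly the lifted partition of unity. The only difference is presentational: you make explicit the Lindel\"of extraction of a countable cover and spell out the surjectivity construction $f=(v_i/v_i^\flat)\varphi$, which the paper only indicates as ``the same construction as in \eqref{eq:15}''.
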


\begin{proof}
  Recall that by assumption $X/G$ is Polish, hence Hausdorff and paracompact.
  This implies that any open cover of $X/G$ admits a subordinate partition of unity.
  We choose $\{x_{i}\}_{i\in\N} \subseteq X$ and $\epsilon_{i} > 0$, with $\{B_{\epsilon_{i}}(x_{i})\}_{i\in\N}$ covering the whole space, such that $B_{\epsilon_{i}}(x_{i})$ is $G$-tempered and $\mu(B_{\epsilon_{i}}(x_{i})) < \infty$ (which can be done by Lemma~\ref{lem:g-precompact-neighbourhood} and since $\mu$ is Radon).
  Furthermore, choose $\epsilon_{i}$ sufficiently small, such that the conclusion of Lemma~\ref{lem:bump-function} holds true:
  There are $0< \epsilon_{i}' < \epsilon_{i}$ and non-negative functions $v_{i} \in C^{G-\mathrm{temp}}_{b}(X)$ supported in $B_{\epsilon_{i}}(x_{i})$, such that $v_{i}^{\flat}$ is uniformly bounded away from zero and infinity on $B_{\epsilon_{i}'}(x_{i})$.
  
  The saturated balls $\{B_{\epsilon_{i}'}(x_{i})G\}$ form an open cover of the quotient space $X/G$, hence admit a subordinate partition of unity $\{\bar{u}_{i}\}_{i\in\N}$, with $\bar{u}_{i} \in C_{b}(X/G)$ and $\mathrm{spt}(\bar{u}_{i}) \subseteq B_{\epsilon_{i}'}(x_{i})G$.
  The lifts $\pi^{\ast}\bar{u}_{i} \in C^{G-\mathrm{inv}}_{b}(X)$ along $\pi\colon X\to X/G$ form a partition of unity on $X$.
  Define the functions
  \begin{equation}
    \label{eq:15}
    u_{i} =
    \begin{cases}
      \tfrac{v_{i}}{v_{i}^{\flat}}\, \pi^{\ast}\bar{u}_{i} \text{ on } B_{\epsilon_{i}'}(x_{i})G\\
      0 \text{ otherwise}.
    \end{cases}
  \end{equation}
  Notice that $u_{i} \geq 0$ is a well-defined bounded continuous function, is supported in $B_{\epsilon_{i}}(x_{i})$, and satisfies $u_{i}^{\flat} = \pi^{\ast}\bar{u}_{i}$.
  This concludes the construction of the $u_{i}$.
  Moreover, by the same construction as in \eqref{eq:15}, we see the surjectivity of $f\mapsto f^{\flat}$ from $C^{G-\mathrm{temp}}(X_{i})$ onto $C^{G-\mathrm{inv}}(X_{i})$.
\end{proof}

\begin{proof}[Proof of Proposition~\ref{prop:existence-quotient-measure}]
  In the following we write $\mu(f) = \int_{X}f \d{\mu}$.
  We first note that the condition $\mu(\cdot g) = \Delta_{G}(g)\, \mu$ implies that $\mu(f_{1} f_{2}^{\flat}) = \mu(f_{1}^{\flat} f_{2})$ for any $f_{1}, f_{2} \in C^{G-\mathrm{temp}}_{b}(X)$.
  Indeed,
  \begin{equation}
    \label{eq:16}
    \begin{aligned}
      \mu(f_{1} f_{2}^{\flat})
      &= \int_{X}\d{\mu}(x) f_{1}(x) \int_{G}\d{\nu}(g)f_2(xg)\\
      &= \int_{G}\d{\nu}(g) \int_{X}\d{\mu}(xg)\Delta_{G}(g)^{-1} f_{1}(x) f_{2}(xg)\\
      &= \int_{G}\d{\nu}(g) \Delta_{G}(g)^{-1} \int_{X}\d{\mu}(x) f_{1}(xg^{-1}) f_{2}(x)\\
      &= \int_{X}\d{\mu}(x) \Big[ \int_{G}\d{\nu}(g) \Delta_{G}(g)^{-1} f_{1}(xg^{-1})\Big] f_{2}(x)\\
      &= \int_{X}\d{\mu}(x) \Big[ \int_{G}\d{\nu}(g^{-1}) f_{1}(xg^{-1})\Big] f_{2}(x).\\
      &= \mu(f_{1}^{\flat} f_{2}),
    \end{aligned}
  \end{equation}
  where in second to last line we used that $\d{\nu}(g) \Delta_{G}(g)^{-1} = \d{\nu}(g^{-1})$.

  Consider $\{u_{i}\}$ as in Lemma~\ref{lem:partition-of-unity} and write $X_{i} \coloneqq \mathrm{spt}(u_{i})\cdot G$ for the saturation of the support.
  For any $i\in\N$, define a linear functional $I_{i}\colon C_{b}^{G-\mathrm{inv}}(X_i) \to \R$ by
  \begin{equation}
    \label{eq:10}
    I_{i}(h) = \mu(u_{i}h) \text{ for } \text{ for } h \in C_{b}^{G-\mathrm{inv}}(X_i).
  \end{equation}
  By \eqref{eq:16} we have $I_{i}(f^{\flat}) = \mu(u_{i}^{\flat} f)$ for any $f \in C^{G-\mathrm{temp}}_{b}(X_{i})$.
  Note that this characterises $I_{i}$ due to the surjectivity of $f\mapsto f^{\flat}$ from $C_{b}^{G-\mathrm{temp}}(X_{i})$ onto $C_{b}^{G-\mathrm{inv}}(X_{i})$.
  Each $I_{i}$ is monotone, since $\mu u_{i}$ is a positive measure, so $I_{i}$ is continuous with respect to the compact-open topology on $C^{G-\mathrm{inv}}_{b}(X_i)$.
  Considering the push-forward along $\pi\colon X_{i}\to X_{i}/G$, we get $\tilde{I}_{i} \coloneqq \pi_{\ast} I_{i} \in C_{b}(X_{i}/G)^{\ast}$.
  Continuity of $\tilde{I}_{i}$ follows from continuity of $\pi$ and definition of the compact-open topologies.
  
  Since $\mu u_{i}$ is a finite Radon measure, for any $\epsilon > 0$ there exists a compact set $K_{\epsilon} \subseteq X_{i}$ such that $\mu(u_i h) \leq \epsilon \|h\|_{\infty}$ for $h\in C_{b}^{G-\mathrm{inv}}(X_{i})$ with $h\vert_{K_{\epsilon}G} \equiv 0$.
  Consequently, application of a variant of the Riesz–Markov–Kakutani representation theorem \cite[Theorem~7.10.6]{BogachevMeasure} to $\tilde{I}_{i}$ and the fact that $C_{b}(X_{i}/G) \cong C_{b}^{G-\mathrm{inv}}(X_{i})$ imply that there exists a unique finite Radon measure $\lambda_{i}$ on $X_{i}/G$, such that
  \begin{equation}
    \label{eq:11}
    \int_{X_{i}/G}f^{\flat}(x)\d\lambda_{i}(xG) = \int_{X_{i}}f(x)u_{i}^{\flat}(x)\d\mu(x) \text{ for } f\in C^{G-\mathrm{temp}}_{b}(X_{i}).
  \end{equation}
  In the following, also write $\lambda_{i}$ for the push-forward along the inclusion $X_{i} \hookrightarrow X$.
  Since $\sum_{i}u_{i}^{\flat} = 1$, we define $\lambda = \sum_{i}\lambda_{i}$ and note that it satisfies \eqref{eq:2} (using monotone convergence and positivity of $f \mapsto f^{\flat}$).
  This defines a locally finite Borel measure and since $X/G$ is strongly Radon (as a completely separable metric space), $\lambda$ is a Radon measure.
  Regarding uniqueness, note that for any $\lambda$ satisfying \eqref{eq:2}, we have $\lambda \cdot u_{i}^{\flat} = \lambda_{i}$.
  In particular, any other candidate for $\lambda$ constructed using a different partition of unity will therefore agree, which implies that the so constructed measure is unique.
\medskip

  The extension from $f\in C_b^{G-\mathrm{temp}}(X)$ to non-negative $f\in C(X)$ follows from the existence of a partition of unity on $X$, consisting of functions with $G$-tempered supports, and monotone convergence.
\end{proof}

\section*{Acknowledgements}
We would like to thank James Norris for many helpful discussions and careful proofreading.

\bibliography{Schwarzian}

\end{document}